\documentclass[10pt,francais,leqno]{smfart}

\usepackage[T1]{fontenc}
\usepackage[francais]{babel}

\usepackage{amssymb,url,xspace,smfthm}
\usepackage{mathrsfs,euscript,color}

\usepackage[all]{xy}

\newcommand{\BibTeX}{{\scshape Bib}\kern-.08em\TeX}
\newcommand{\T}{\S\kern .15em\relax }
\newcommand{\AMS}{$\mathcal{A}$\kern-.1667em\lower.5ex\hbox
        {$\mathcal{M}$}\kern-.125em$\mathcal{S}$}

\theoremstyle{plain}

\newtheorem*{montheo}{\textsc{Théorème}}
\newtheorem*{mapropo}{\textsc{Proposition}}

\newtheorem*{monlem}{\textsc{Lemme}}
\newtheorem*{monlem1}{\textsc{Lemme 1}}
\newtheorem*{monlem2}{\textsc{Lemme 2}}
\newtheorem*{monlem3}{\textsc{Lemme 3}}

\newtheorem*{marema}{\textsc{Remarque}}
\newtheorem*{marema1}{\textsc{Remarque 1}}
\newtheorem*{marema2}{\textsc{Remarque 2}}
\newtheorem*{marema3}{\textsc{Remarque 3}}

\tolerance 400
\pretolerance 200

\catcode`\@=11
\catcode`\Ž=13
\catcode`\=13
\catcode`\ˆ=13
\catcode`\=13
\catcode`\=13
\catcode`\‰=13
\catcode`\™=13
\catcode`\"=13
\catcode`\ž=13
\catcode`\=13
\catcode`\'=13
\def Ž{\'e}
\def {\`e}
\def ˆ{\`a}
\def {\`u}
\def {\^e}
\def ‰{\^a}
\def ™{\^o}
\def "{\^{\i}}
\def ž{\^u}
\def {\c c}
\def '{\"e}
\catcode`\:=13
\def :{~\string:}
\catcode`\;=13
\def ;{~\string;}
\catcode`\!=13
\def !{~\string!}

\def\ES#1{\EuScript{#1}}

\def\wt#1{\widetilde{#1}}
\def\bs#1{\boldsymbol{#1}}
\def\cad{c'est--\`a--dire\ }

\title[LE LEMME FONDAMENTAL POUR L'ENDOSCOPIE TORDUE]
{LE LEMME FONDAMENTAL POUR L'ENDOSCOPIE TORDUE: LE CAS O\`U LE GROUPE ENDOSCOPIQUE ELLIPTIQUE 
NON RAMIFI\'E EST UN TORE}


\author{Bertrand Lemaire}
\address{Aix Marseille UniversitŽ, CNRS, Centrale Marseille, I2M, UMR 7373\\ 39 rue F. Joliot Curie, 
13453 Marseille, France}
\email{Bertrand.Lemaire@univ-amu.fr}
\thanks{Le premier auteur a bŽnŽficiŽ d'une subvention de l'Agence Nationale de la Recherche, projet ANR--13--BS01--00120--02 FERPLAY}

\author{Jean--Loup Waldspurger}
\address{CNRS, Institut de Mathématiques de Jussieu\\
2 place Jussieu 75005 Paris}
\email{jean-loup.waldspurger@imj-prg.fr}


\begin{document}
\def\smfbyname{}
\small

\begin{abstract}On prouve le lemme fondamental pour l'endoscopie tordue, pour les unités des algèbres de Hecke sphériques, dans le cas où la donnée endoscopique non ramifiée a pour groupe sous--jacent un tore. Cela entra\^{\i}ne que le lemme fondamental pour l'endoscopie tordue est désormais démontré, pour tous les éléments des algèbres de Hecke sphériques, en caractéristique nulle et en toute caractéristique résiduelle.
\end{abstract}

\begin{altabstract}We prove the fundamental lemma for twisted endoscopy, for the unit elements of the spherical Hecke algebras, in the case of a non ramified elliptic endoscopic datum whose underlying group is a torus. This implies that the fundamental lemma for twisted endoscopy is now proved, for all elements in the spherical Hecke algebras, in characteristic zero and any residue characteristic.
\end{altabstract}

\subjclass{22E50}

\keywords{endoscopie tordue, donnée endoscopique elliptique non ramifiée, lemme fondamental}

\altkeywords{twisted endoscopy, non ramified elliptic endoscopic datum, fundamental lemma}
\maketitle
\setcounter{tocdepth}{3}
\eject

\tableofcontents

\begin{center}\rule{45mm}{0.1mm}\end{center}
\vskip2mm

Le second auteur est heureux de dŽdier cet article ˆ Roger Howe, ˆ qui il sait devoir beaucoup.

\section{Introduction}\label{introduction}

\noindent {\bf 1.1.} --- Cet article est la suite de notre précédent travail avec Colette M\oe glin \cite{LMW}. Dans loc.~cit., nous prouvons par une méthode purement locale que si le lemme fondamental pour l'endoscopie tordue (en caractéristique nulle) est vrai pour les unités des algèbres de Hecke sphériques, alors il l'est pour tous les éléments de ces algèbres de Hecke. La preuve, dont l'idée est due à Arthur, utilise le transfert spectral, obtenu par voie globale grâce au lemme fondamental pour les unités en presque toutes les places finies, ainsi que le lemme fondamental pour les unités (en la place qui nous intéresse) dans le cas où la donnée endoscopique non ramifiée a pour groupe sous--jacent un tore. Ce cas est très particulier. En effet, l'existence d'une telle donnée pour $(\wt{G},\omega)$ --- voir plus loin --- implique que le groupe adjoint $\hat{G}_{\rm AD}$ du groupe dual de $G$, l'action galoisienne sur $\hat{G}_{\rm AD}$, ainsi que l'action $\hat{\theta}$ sur $\hat{G}_{\rm AD}$ donnée dualement par $\wt{G}$, sont d'un type bien particulier décrit en \cite[5.2]{LMW}: $\hat{G}_{\rm AD}$ est un produit de groupes de type $A_n$, et le groupe de Galois ainsi que $\hat{\theta}$ oprent par permutation des facteurs. On démontre ici le lemme fondamental pour les unités dans ce cas. Cela élimine la restriction sur la caractéristique résiduelle du corps de base dans \cite{LMW}.

\vskip2mm
\noindent {\bf 1.2.} --- Rappelons que pour l'endoscopie ordinaire (avec caractère $\omega$), Hales \cite{H} a prouvé par une méthode globale que le lemme fondamental pour les unités des algèbres de Hecke sphériques en {\it presque toutes} les places finies implique le lemme fondamental pour tous les éléments de ces algèbres de Hecke en {\it toutes} les places finies. En particulier, il déduit le lemme fondamental pour les unités aux ``mauvaises places'' du lemme fondamental pour les unités en presque toutes les autres places. Curieusement, cette méthode n'utilise aucun résultat local en la place en question. En revanche, dans notre méthode pour l'endoscopie tordue, le principe local--global et le lemme fondamental pour les unités en presque toutes les places finies sont cachés dans le transfert spectral (dû à Arthur dans le cas non tordu, puis étendu par M\oe glin au cas tordu). Mais on a quand même besoin d'un résultat local en la place qui nous intéresse, et c'est ce résultat qui est démontré ici. L'histoire est encore plus enchevêtrée puisqu'au bout du compte on se ramène au cas du changement de base (avec caractère $\omega$) pour $GL(n)$, puis en adaptant la méthode de Kottwitz \cite{K}, à celui de l'endoscopie ordinaire (toujours avec caractère $\omega$) pour $GL(n)$ et donc au résultat de Hales \cite{H}.   

\vskip2mm
\noindent {\bf 1.3.} --- Rappelons brièvement l'énoncé du lemme fondamental démontré ici. Soit $F$ une extension finie d'un corps $p$--adique ${\Bbb Q}_p$. On fixe un élément de Frobénius $\phi\in W_F$, où $W_F$ est le groupe de Weil de $F$. Soit $G$ un groupe algébrique réductif connexe défini et quasi--déployé sur $F$, et déployé sur une extension non ramifiée de $F$. Soient $\wt{G}$ un $G$--espace tordu défini sur $F$, et $\omega$ un caractère de $G(F)$. On suppose que le couple $(\wt{G},\omega)$ vérifie toutes les hypothèses de \cite[2.1, 2.6]{LMW}, à savoir que:
\begin{itemize}
\item l'ensemble $\wt{G}(F)$ n'est pas vide;
\item le $F$--automorphisme $\theta$ de $Z(G)$ défini par $\wt{G}$ est d'ordre fini;
\item le caractère $\omega$ est est non ramifié, et trivial sur $Z(G;F)^\theta$;
\item l'espace tordu $(G(F),\wt{G}(F))$ possède un sous--espace hyperspécial $(K,\wt{K})$.
\end{itemize}
Soit $\bs{T}'=(T',\ES{T}',\tilde{s})$ une donnée endoscopique elliptique et non ramifiée pour $(\wt{G},\omega)$ telle que $T'$ est un tore. Le choix d'un élément $(h,\phi)\in \ES{T}'$ permet d'identifier $\ES{T}'$ à ${^LT'}$ (on n'a donc pas besoin de données auxiliaires). \`A cette donnée $\bs{T}'$ est associé un $T$--espace tordu $\wt{T}'$, défini sur $F$ et à torsion intérieure. \`A $(K,\wt{K})$ sont associés un sous--espace hyperspécial $(K'\!,\wt{K}')$ de $\wt{T}'(F)$, et un facteur de transfert normalisé $\Delta: \ES{D}(\bs{T}')\rightarrow {\Bbb C}^\times$. Soit $\delta \in \wt{T}'(F)$ un élément fortement $\wt{G}$--régulier. \`A toute fonction $f\in C^\infty_{\rm c}(F)$ est associée une intégrale orbitale endoscopique
$$
I^{\wt{G}}(\bs{T}'\!,\delta,f)= d(\theta^*)^{1/2}\sum_\gamma [G^\gamma(F): G_\gamma(F)]^{-1}\Delta(\delta,\gamma)I^{\wt{G}}(\gamma,\omega,f)\leqno{(1)}
$$
où $\gamma$ parcourt les éléments de $\wt{G}(F)$ tels que $(\delta,\gamma)\in \ES{D}(\bs{T}')$, modulo conjugaison par $G(F)$; on renvoie à \ref{l'énoncé} pour la définition des termes à droite de l'égalité. Le lemme fondamental pour la donnée $\bs{T}'$ et la fonction caractéristique $
\bs{1}_{\wt{K}}$ de $\wt{K}$ dit que pour tout élément fortement $\wt{G}$--régulier $\delta\in \wt{T}'(F)$, on a l'égalité
$$
\bs{1}_{\wt{K}'}(\delta)=I^{\wt{G}}(\bs{T}'\!,\delta, \bs{1}_{\wt{K}}).\leqno{(2)}
$$

\vskip2mm
\noindent {\bf 1.4.} --- Pour démontrer l'égalité (2) de 1.3, on se ramène par étapes successives à des cas de plus en plus simples à traiter, pour finalement tomber sur le cas de l'endoscopie ordinaire (avec caractère $\omega$) pour $GL(n)$. La première étape, qui est aussi la plus difficile, consiste à se ramener au cas où le groupe $G$ est adjoint. Elle fait l'objet de la section 2. On pose
$$
\wt{G}_{\rm AD}= \wt{G}/Z(G)=Z(G)\backslash \wt{G}.
$$
C'est un espace tordu sous $G_{\rm AD}$,  défini sur $F$. L'homomorphisme naturel $\rho: G\rightarrow G_{\rm AD}$ se prolonge (tout aussi naturellement) en une application $\tilde{\rho}: \wt{G}\rightarrow \wt{G}_{\rm AD},\, \gamma \mapsto \gamma_{\rm ad}$. Le sous--espace  hyperspécial $(K,\wt{K})$ de $\wt{G}(F)$ détermine un sous--espace hyperspécial $(K_{\rm ad},\wt{K}_{\rm ad})$ de $\wt{G}_{\rm AD}(F)$ --- cf. \ref{données endoscopiques pour G_AD}. On fixe une donnée endoscopique elliptique et non ramifiée $\bs{T}'=(T',\ES{T}',\tilde{s})$ pour $(\wt{G},\omega)$ telle que $T'$ est un tore. Alors les choix d'un élément $(h,\phi)\in \ES{T}'$, d'une décom\-position $h= z_h\hat{\rho}(h_{\rm sc})$ avec $z_h\in Z(\hat{G})$ et $h_{\rm sc}\in \hat{G}_{\rm SC}$, et d'un élément $\tilde{s}_{\rm sc}\in \hat{G}_{\rm SC}$ dont l'image dans $\hat{G}_{\rm AD}$ co\"{\i}ncide avec celle de $\tilde{s}$, déterminent une donnée endoscopique elliptique et non ramifiée $\bs{T}'_{\!{\rm ad}}=(T'_{\rm ad},\ES{T}'_{\rm sc},\tilde{s}_{\rm sc})$ pour $(\wt{G}_{\rm AD},\omega'_{\rm ad})$, où $\omega'_{\rm ad}$ est un caractère de $G_{\rm AD}(F)$. La classe d'isomorphisme de cette donnée $\bs{T}'_{\!{\rm ad}}$ (tout comme le caractère $\omega'_{\rm ad}$) dépend des choix. Pour d'autres choix, et plus généralement pour un paramètre $\beta$ variant dans un certain groupe de cohomologie noté $B$, on construit une autre donnée endoscopique elliptique $\bs{T}'_{\!{\rm ad}}[\beta]=(T'_{\rm ad},\ES{T}'_{\rm sc}[\beta],\tilde{s}_{\rm sc})$ pour $(\wt{G}_{\rm AD}, \omega'_{\rm ad}[\beta])$. Cette donnée $\bs{T}'_{\!{\rm ad}}[\beta]$ n'est pas forcément non ramifiée, mais elle vérifie encore $\ES{T}'_{\rm sc}[\beta]\simeq {^L(T'_{\rm ad})}$. On a aussi un sous--groupe $B^{\rm nr}\subset B$ correspondant aux données $\bs{T}'_{\!{\rm ad}}[\beta]$ qui sont non ramifiées. Au sous--espace hyperspécial $(K_{\rm ad},\wt{K}_{\rm ad})$ de $\wt{G}_{\rm AD}(F)$ sont associés un sous--espace hyperspécial $(K'_{\rm ad},\wt{K}'_{\rm ad})$ de $\wt{T}'_{\rm ad}$ et un facteur de transfert normalisé $\Delta: \ES{D}(\bs{T}'_{\!{\rm ad}})\rightarrow {\Bbb C}^\times$. Pour $(\delta,\gamma)\in \ES{D}(\bs{T}')$, le couple $(\delta_{\rm ad},\gamma_{\rm ad})$ appartient à $\ES{D}(\bs{T}'_{\!{\rm ad}})$. Via le choix d'un point--base $(\delta_0,\gamma_0)\in \ES{D}(\bs{T}')$ tel que $\delta_0\in \wt{K}'$ et $\gamma_0\in \wt{K}$, on peut normaliser les facteurs de transfert $\Delta[\beta]$ pour $\bs{T}'_{\!{\rm ad}}[\beta]$ ($\beta\in B$) en imposant la condition $\Delta[\beta](\delta_{0,{\rm ad}},\gamma_{0,{\rm ad}})=\Delta(\delta_{0,{\rm ad}},\gamma_{0,{\rm ad}})$. On verifie que pour $\beta \in B^{\rm nr}$, cette normalisation co\"{\i}ncide avec la normalisation non ramifiée, \cad que $\Delta[\beta]$ est le facteur de transfert normalisé associé à $\wt{K}_{\rm ad}$. On peut alors comparer ces différents facteurs de transfert $\Delta[\beta](\delta_0,\gamma)$, $\beta\in B$, pour $\gamma\in \wt{G}(F)$ dans la classe de conjugaison stable de $\gamma_0$, et en déduire que
$$
I^{\wt{G}}(\bs{T}', \delta_0,\bs{1}_{\wt{K}})= \vert B^{\rm nr}\vert^{-1}\sum_{\beta\in B^{\rm nr}} I^{\wt{G}_{\rm AD}}(\bs{T}'_{\!{\rm ad}}[\beta], \delta_0,\bs{1}_{\wt{K}})=I^{\wt{G}_{\rm AD}}(\bs{T}'_{\!{\rm ad}},\delta_0,\bs{1}_{\wt{K}_{\rm ad}}).\leqno{(1)}
$$
On obtient que:
\begin{enumerate}
\item[(i)] Si le lemme fondamental est vérifié pour la donnée $\bs{T}'_{\!{\rm ad}}$ et la fonction $\bs{1}_{\wt{K}_{\rm ad}}$, alors il l'est pour la donnée $\bs{T}'$ et la fonction $\bs{1}_{\wt{K}}$. 
\item[(ii)] Supposons l'homomorphisme naturel $K'\rightarrow K'_{\rm ad}$ surjectif. Si le lemme fondamental est vérifié 
pour la donnée $\bs{T}'$ et la fonction $\bs{1}_{\wt{K}}$, alors il l'est pour la donnée $\bs{T}'_{\!{\rm ad}}$ et la fonction $\bs{1}_{\wt{K}_{\rm ad}}$.
\end{enumerate}
Notons que si le centre $Z(G)$ est connnexe, alors l'homomorphisme $K'\rightarrow K'_{\rm ad}$ est surjectif. D'après (i), pour montrer l'égalité (2) de 1.3, on peut supposer $G$ adjoint.

\vskip2mm
\noindent {\bf 1.5.} --- On suppose maintenant que $G$ est adjoint. Dans la section 3, on se ramène au cas du changement de base (avec caractère $\omega$) pour $PGL(n)$. Soit $\ES{E}$ la paire de Borel épinglée définie sur $F$ dont $K$ est issu. Le stabilisateur $Z(\wt{G},\ES{E})$ de $\ES{E}$ dans $\wt{G}$ est défini sur $F$, et comme c'est un espace principal homogène sous $Z(G)=\{1\}$, on a $Z(\wt{G},\ES{E})=\{\epsilon\}$ pour un élément $\epsilon\in \wt{G}(F)$. En posant $\theta={\rm Int}_\epsilon$, on a les identifications $\wt{G}=G\theta$ et $\wt{K}=K\theta$. 

Notons $\Omega$ le groupe d'automorphismes du diagramme de Dynkin $\bs{\Delta}$ de $G$ engendré par $\phi$ et par $\theta$. Soient $\bs{\Delta}_1,\ldots ,\bs{\Delta}_d$ les orbites de $\Omega$ dans l'ensemble des composantes connexes de $\bs{\Delta}$. \`A la décomposition $\bs{\Delta}= \bs{\Delta}_1\coprod\ldots \coprod \bs{\Delta}_d$ correspond une décomposition $G=G_1\times \cdots \times G_d$ qui est définie sur $F$ et $\theta$--stable. On a aussi une décomposition $\wt{G}= \wt{G}_1\times \cdots \times \wt{G}_d$, où $\wt{G}_i= G_i\theta_i$, $\theta_i=\theta\vert_{G_i}$. Tout se décompose en produit, et on est ramené au cas $d=1$. 

On suppose que $\Omega$ opère transitivement sur les composantes connexes de $\bs{\Delta}$. Soient $\bs{\Delta}_1,\ldots ,\bs{\Delta}_q$ les orbites de $\theta$ dans l'ensemble de ces composantes connexes. Puisque $\phi$ et $\theta$ commutent, ces orbites sont permutées transtivement par $\phi$, et en notant $F_1$ l'extension non ramifiée de $F$ de degré $q$ 
et ${\rm Res}_{F_1/F}$ le foncteur ``restriction à la Weil'', on a $G= {\rm Res}_{F_1/F}(G_1)$ et 
$\wt{G}={\rm Res}_{F_1/F}(\wt{G}_1)$ pour un espace tordu $(G_1,\wt{G}_1)$ défini sur $F_1$. Le groupe $G_1$ est adjoint, et $\wt{G}_1=G_1\theta_1$ où $\theta_1$ est le $F_1$--automorphisme de $G_1$ tel que $\theta={\rm Res}_{F_1/F}(\theta_1)$. On a des identifi\-cations $G_1(F_1)=G(F)$ et $\wt{G}_1(F_1)= \wt{G}(F)$. La restriction à la Weil met en bijection tous les objets considérés (données endoscopiques, espaces hyperspéciaux, facteurs de transfert), et quitte à remplacer $F$ par $F_1$ et $(G,\wt{G})$ par $(G_1,\wt{G}_1)$, on peut supposer $q=1$. 

On suppose que $\theta$ opère transitivement sur les composantes connexes de $\bs{\Delta}$. Soient $\bs{\Delta}_1,\ldots ,\bs{\Delta}_r$ les orbites de $\phi$ dans l'ensemble de ces composantes connexes. Puisque $\phi$ et $\theta$ commutent, ces orbites sont permutées transitivement par $\theta$, et l'on est dans une situation similaire à celle du changement de base aux places décomposées \cite[chap.~1, 5]{AC}, avec le caractère $\omega$ en plus. Comme dans loc.~cit., les intégrales orbitales sur $\wt{G}(F)$ se ramènent par produits de convolution à des intégrales orbitales sur $\wt{G}_1(F)$, où $(G_1,\wt{G}_1)$ est l'espace tordu défini sur $F$ correspondant à $\bs{\Delta}_1$. On fait de même avec les intégrales orbitales endoscopiques, ce qui nous ramène au cas $r=1$.

On suppose enfin que chacun des deux automorphismes $\theta$ et $\phi$ opère transitivement sur l'ensemble des composantes connexes de $\bs{\Delta}$. Alors on est dans la situation suivante:
\begin{itemize}
\item $G={\rm Res}_{F_1/F}(G_1)$ où $F_1$ est une extension non ramifiée de $F$ de degré $m$ et $G_1$ est un groupe adjoint défini sur $F_1$;
\item l'action du groupe de Galois absolu $\Gamma_F={\rm Gal}(\overline{F}/F)$ de $F$ sur $G(\overline{F})= G_1(\overline{F})^{\times m}$ est donnée par $\sigma(x_1,\ldots ,x_m)= (\sigma(x_1),\ldots ,\sigma(x_m))$ pour $\sigma\in \Gamma_{F_1}$ et par $\phi(x_1,\ldots ,x_m)= (x_2,\ldots ,x_m,\phi^m(x_1))$;
\item l'action de $\theta$ sur $G(\overline{F})$ est donnée par $\theta(x_1,\ldots ,x_m)=\phi^e(\theta_1(x_1),\ldots ,\theta_1(x_m))$ pour un entier $e\in \{1,\ldots ,m-1\}$ premier à $m$ (si $m=1$, on prend $e=0$) et un $F_1$--automorphisme $\theta_1$ de $G_1$.
\end{itemize}
On pose $\wt{G}_1=G_1\theta_1$. D'après \cite[5.2]{LMW}, l'existence d'une donnée endoscopique elliptique et non ramifiée pour $(\wt{G},\omega)$ ayant pour groupe sous--jacent un tore implique que $G_1\simeq PGL(n)$ et $\theta_1={\rm id}$. Alors $\theta$ est le $F$--automorphisme de $G={\rm Res}_{F_1/F}(G_1)$ défini par un générateur du groupe de Galois de $F_1/F$, et on est dans la situation du changement de base non ramifié (avec caractère $\omega$) pour $PGL(n)$. Notons que si $m=1$, alors $\theta={\rm id}$ et on peut appliquer \cite{H}. Reste à traiter le cas $m>1$.

\vskip2mm
\noindent {\bf 1.6.} --- Le cas du changement de base non ramifié (avec caractère $\omega$) pour $PGL(n)$ est traité dans la section 4. 
On suppose que $G={\rm Res}_{F_1/F}(PGL(n))$ pour une extension non ramifiée $F_1/F$ de degré $m>1$, et que $\theta$ est le $F$--automorphisme de $G$ défini par un générateur de ${\rm Gal}(F_1/F)$. Puisque le centre $Z(G)$ est connexe, on peut grâce au point (ii) de 1.3 remplacer $PGL(n)$ par $GL(n)$. Notons $\mathfrak{o}_1$ l'anneau des entiers de $F_1$. On a $K=GL(n,\mathfrak{o}_1)$ et $\wt{K}= K\theta$. Soit $\bs{T}'=(T',\ES{T}',\tilde{s})$ une donnée endoscopique elliptique et non ramifiée pour $(G\theta,\omega)$ telle que $T'$ est un tore. Si $\omega=1$, alors le résultat de Kottwitz \cite[prop.~1, p.~245]{K} ramène le lemme fondamental pour $\bs{T}'$ et la fonction $\bs{1}_{\wt{K}}$ au lemme fondamental pour $\underline{\bs{T}}'$ et la fonction $\bs{1}_{GL(n,\mathfrak{o})}$, où $\underline{\bs{T}}'$ est une donnée endoscopique elliptique et non ramifiée pour $GL(n)$ de groupe sous--jacent un tore $\underline{T}'\simeq T'$. On vérifie ici que la méthode de Kottwitz marche encore si $\omega\neq 1$. On conclut grâce au résultat de Hales \cite{H}.

\section{Relations entre différents lemmes fondamentaux}\label{relations entre différents LF}

\subsection{Les hypothèses}\label{les hypothèses}
Soit $F$ un corps commutatif localement compact non archimédien de caractéristique nulle. On fixe une clôture algébrique $\overline{F}$ de $F$, et on note $\Gamma_F$ le groupe de Galois de l'extension $\overline{F}/F$, $I_F\subset \Gamma_F$ son groupe d'inertie, et $W_F\subset \Gamma_F$ son groupe de Weil. Soit $F^{\rm nr}= \smash{\overline{F}}^{I_F}$ la sous--extension non ramifiée maximale de $\overline{F}/F$. On fixe un élément de Frobenius $\phi\in W_F$, \cad que la restriction de $\phi$ à $F^{\rm nr}$ est l'automorphisme de Frobenius de l'extension $F^{\rm nr}/F$. On pose $W_F^{\rm nr}= W_F/I_F \simeq \langle \phi \rangle$.

On note $\mathfrak{o}=\mathfrak{o}_F$ l'anneau des entiers de $F$, et $\mathfrak{o}^{\rm nr}=\mathfrak{o}_{F^{\rm nr}}$ celui de $F^{\rm nr}$.

On considère un groupe algébrique réductif connexe $G$ défini sur $F$, et un $G$--espace (algébrique) tordu $\wt{G}$ lui aussi défini sur $F$. Pour $\gamma\in \wt{G}$, on note $\theta_\gamma={\rm Int}_\gamma$ l'automorphisme de $G$ défini par $\gamma$. On munit les ensembles de points $F$--rationnels $G(F)$ et $\wt{G}(F)$ de la topologie définie par $F$. 
On fixe un caractère $\omega$ de $G(F)$, \cad un homomorphisme continu de $G(F)$ dans ${\Bbb C}^\times$. On suppose que le couple $(\wt{G},\omega)=((G, \wt{G}),\omega)$ est {\it non ramifié}, au sens où toutes 
les hypothèses de \cite[2.1, 2.6]{LMW} sont satisfaites:
\begin{itemize}
\item le groupe $G$ est quasi--déployé sur $F$, et déployé sur $F^{\rm nr}$; 
\item l'ensemble $\wt{G}(F)$ des points $F$--rationnels de $\wt{G}$ n'est pas vide;
\item le $F$--automorphisme $\theta$ de $Z(G)$ défini par $\wt{G}$ est d'ordre fini;
\item le caractère $\omega$ de $G(F)$ est non ramifié, et trivial sur $Z(G;F)^\theta$;
\item l'espace tordu $(G(F),\wt{G}(F))$ possède un sous--espace hyperspécial $(K,\wt{K})$.
\end{itemize}

Précisons ces hypothèses. Puisque le groupe $G$ est quasi--déployé sur $F$ et déployé sur $F^{\rm nr}$, il existe une paire de Borel épinglée $\ES{E}=(B,T,\{E_\alpha\}_{\alpha\in \Delta})$ définie sur $F$ et déployée sur une sous--extension finie $F'/F$ de $F^{\rm nr}/F$, \cad telle que le tore maximal $T$ de $G$ est déployé sur $F'$ (cela implique que les éléments $E_\alpha$ pour $\alpha\in \Delta$ sont $F'$--rationnels). Notons $Z(\wt{G}, \ES{E})$ le stabilisateur de $\ES{E}$ dans $\wt{G}$. Cet ensemble n'est pas vide, et c'est un $Z(G)$--espace tordu défini sur $F$. Soit $\theta_{\ES{E}}$ le $F$--automorphisme de $G$ défini par $\theta_{\ES{E}}=\theta_\gamma$ pour $\gamma\in Z(\wt{G},\ES{E})$. 

Puisque l'ensemble $\wt{G}(F)$ n'est pas vide, c'est un $G(F)$--espace (topologique) tordu. 

Pour $\gamma\in \wt{G}\;(=\wt{G}(\overline{F}))$, le $\overline{F}$--automorphisme $\theta_\gamma$ de $G$ se restreint en un $F$--automorphisme du centre $Z(G)$ de $G$ qui ne dépend pas de $\gamma$, et que l'on note simplement $\theta$. L'hypothèse de finitude sur $\theta$ implique en particulier que $\wt{G}$ est une composante connexe d'un groupe algébrique affine défini sur $F$, de composante neutre $G$. 

Au caractère $\omega$ de $G(F)$, la correspondance de Langlands associe une classe de cohomologie $\bs{a}\in {\rm H}^1(W_F, Z(\hat{G}))$, où $Z(\hat{G})$ est le centre du groupe dual de $G$. On suppose que cette classe provient par inflation d'un élément de ${\rm H}^1(W_F^{\rm nr},Z(\hat{G}))$. 
L'hypothèse $\omega\vert_{Z(G;F)^\theta}=1$ est nécessaire pour que la théorie ne soit pas vide. 

\`A la paire $\ES{E}$ sont associés, par la théorie de Bruhat--Tits, un sous--groupe (ouvert, compact maximal) hyperspécial $K=K_{\ES{E}}$ de $G(F)$, et un $\mathfrak{o}$--schéma en groupe lisse 
$\ES{K}=\ES{K}_{\ES{E}}$ de fibre générique $G$ tel que $K=\ES{K}(\mathfrak{o})$. Posons
$$
N_{\smash{\wt{G}(F)}}(K)= \{\gamma\in \wt{G}(F): {\rm Int}_\gamma(K)=K\}.
$$
On suppose que l'ensemble $N_{\smash{\wt{G}(F)}}(K)$ n'est pas vide. Alors c'est un espace principal homogène sous le groupe $Z(G;F)K$, et tout élément $\gamma\in N_{\smash{\wt{G}(F)}}(K)$ définit un $K$--espace (topologique) tordu
$$\wt{K}= K\gamma = \gamma K.
$$
Si l'ensemble $\wt{K}\cap \ES{K}(\mathfrak{o}^{\rm nr})Z(\wt{G},\ES{E}; F^{\rm nr})$ n'est pas vide, ce que l'on suppose, alors on dit que $\wt{K}$ est un {\it sous--espace hyperspécial} de $\wt{G}(F)$. 

On fixe $\ES{E}$, $K= K_{\ES{E}}$ et $\wt{K}=K\gamma = \gamma K$ vérifiant toutes les hypothèses ci--dessus.

\vskip1mm 
Le groupe dual $\hat{G}$ de $G$ est muni d'une action algébrique de $\Gamma_F$, notée $\sigma \mapsto\sigma_G$, et d'une paire de Borel 
épinglée $\hat{\ES{E}}= (\hat{B},\hat{T}, \{\hat{E}_\alpha\}_{\alpha \in \hat{\Delta}})$ qui est définie sur $F$, \cad $\Gamma_F$--stable. Puisque $G$ est déployé sur $F^{\rm nr}$, l'action de $\Gamma_F$ sur $\hat{G}$ se factorise à travers $\Gamma_F/I_F$, et est donc entièrement déterminée par la donnée de l'automorphisme $\phi_G$ de $\hat{G}$. On pose
$$
{^LG}= \hat{G}\rtimes W_F \; (= (\hat{G}\times I_F)\rtimes \langle \phi \rangle ).
$$
Le $F$--automorphisme $\theta_{\ES{E}}$ de $G$ définit dualement un automorphisme $\hat{\theta}$ de $\hat{G}$ qui conserve la paire $\hat{\ES{E}}$ et commute à l'action galoisienne sur $\hat{G}$ (i.e. à $\phi_G$) --- cf. \cite[2.2]{LMW}. 

Soit $\mathfrak{E}_{\rm t-nr}= \mathfrak{E}_{\rm t-nr}(\wt{G},\omega)$ l'ensemble des classes d'isomorphisme de données endoscopiques elliptiques non ramifiées $(G',\ES{G}',\tilde{s})$ pour $(\wt{G},\omega)$ telles que le groupe sous--jacent $G'$ est un tore --- cf. \cite[2.3, 2.6, 2.10]{LMW} (ou \cite[I, 6]{MW}). Par abus d'écriture, on écrit simplement ``$\bs{T}'=(T',\ES{T}',\tilde{s})\in \mathfrak{E}_{\rm t-nr}$'' pour ``$\bs{T}'=(T',\ES{T}',\tilde{s})$ un représentant d'une classe dans $\mathfrak{E}_{\rm t-nr}$''. 

Soit $\bs{T}'=(T',\ES{T}',\tilde{s})\in \mathfrak{E}_{\rm t-nr}$. On suppose, ce qui loisible, que $\tilde{s}= s\hat{\theta}$ avec $s\in \hat{T}$. Alors le tore dual $\hat{T}'$ de $T'$ s'identifie à $\hat{T}^{\hat{\theta},\circ}$ muni d'une action galoisienne convenable; où $\hat{T}^{\hat{\theta},\circ}$ est la composante neutre du sous--groupe $\hat{T}^{\hat{\theta}}$ de $\hat{T}$ formé des points fixes sous $\hat{\theta}$. Puisque la donnée $\bs{T}'$ est non ramifiée, le tore $T'$ est déployé sur $F^{\rm nr}$, et l'action de $\Gamma_F$ sur $T'$, notée $\sigma \mapsto \sigma_{T'}$, est entièrement déterminée par la donnée de l'automorphisme $\phi_{T'}$ de $\hat{T}'$. Pour $(h,\phi)\in \ES{T}'$, l'automorphisme $x\mapsto h\phi(x)h^{-1}$ de $\hat{G}$ agit comme $\phi_{T'}$ sur $\hat{T}'$. On a $\ES{T}'=(\hat{T}'\times I_F)\rtimes \langle (h,\phi)\rangle$, et l'application
$$
\ES{T}'\rightarrow {^LT'}= \hat{T}'\rtimes W_F,\, (x,w)(h,\phi)^n\mapsto (x,w\phi^n), \quad (x,w)\in \hat{T}'\times I_F, n\in {\Bbb Z}, \leqno{(1)}
$$
est un isomorphisme. Il existe un cocycle $a: W_F\rightarrow Z(\hat{G})$ dans la classe $\bs{a}$ tel que pour tout $(x,w)\in \ES{T}'$, on a l'égalité dans ${^LG}$
$$
{\rm Int}_{\tilde{s}}(x,w)= (xa(w),w), \quad (x,w)\in \ES{T}'.\leqno{(2)}
$$
Posons $\bs{h}= h\phi\in \hat{G}W_F$. Alors (2) équivaut à l'égalité dans $\hat{G}W_F\hat{\theta}= \hat{G} \hat{\theta}W_F$
$$
\tilde{s}\bs{h} = a(\phi) \bs{h}\tilde{s}.\leqno{(3)}
$$

Du plongement $\hat{T}'\hookrightarrow \hat{T}$ se déduit par dualité un morphisme
$$
\xi: T \rightarrow T/(1-\theta_{\ES{E}})(T)\simeq T'.
$$
Ce morphisme n'est en général pas défini sur $F$, mais sa restriction à $Z$, notée
$$
\xi_Z: Z(G)\rightarrow Z(T')=T',
$$
est définie sur $F$. Le morphime $\xi_Z$ se quotiente un morphisme (lui aussi défini sur $F$)
$$
\xi_{\ES{Z}}: \ES{Z}(G)= Z(G)/(1-\theta)(Z(G))\rightarrow T'.
$$
On note $\ES{Z}(\wt{G},\ES{E})$ le quotient de $Z(\wt{G},\ES{E})$ par l'action de $Z(G)$ par conjugaison, et on pose
$$
\wt{T}'= T'\times_{\ES{Z}(G)}Z(\wt{G},\ES{E}).
$$
Les actions à gauche et à droite de $T'$ sur $T'\times Z(\wt{G},\ES{E})$ se descendent en des actions sur $\wt{T}'$, et l'action de $\Gamma_F$ sur $T'\times Z(\wt{G},\ES{E})$ se descend en une action sur $\wt{T}'$, faisant de $T'$ un espace 
(algébrique) tordu sous $T'$, défini sur $F$ et à torsion intérieure. En \cite[I, 1.8]{MW} --- voir aussi \cite[2.5]{LMW} --- est défini un sous--ensemble $\ES{D}(\bs{T}')$ de $\wt{T}'(F)\times \wt{G}(F)$. C'est l'ensemble des couples $(\delta,\gamma)\in \wt{T}'(F)\times \wt{G}(F)$ d'ŽlŽments semisimples dont les classes de conjugaison stable se correspondent et tels que $\gamma$ soit fortement rŽgulier. On sait que la donnée $\bs{T}'$ est {\it relevante} \cite[6.2]{MW}, \cad que l'ensemble $\ES{D}(\bs{T}')$ n'est pas vide (en particulier $\wt{T}'(F)\neq \emptyset$). On note $K'= T'(\mathfrak{o})$ le sous--groupe compact maximal de $T'(F)$; c'est l'unique sous--groupe hyperspécial de $T'(F)$. \`A $\wt{K}$ sont associés en \cite[I, 6.2, 6.3]{MW} un sous--espace hyperspécial $\wt{K}'$ de $T'(F)$ --- \cad un élément de $T'(F)/K'$ ---, et un facteur de transfert normalisé
$$
\Delta: \ES{D}(\bs{T}')\rightarrow {\Bbb C}^\times.
$$

\subsection{L'énoncé}\label{l'énoncé}Continuons avec la donnée $\bs{T}'=(T',\ES{T}', \tilde{s})\in \mathfrak{E}_{\rm t-nr}$ de \ref{les hypothèses}. On munit $G(F)$ de la mesure de Haar $dg$ qui donne le volume $1$ à $K$, et $T'(F)$ de la mesure de Haar $dt'$ qui donne le volume $1$ à $K'$. Soit un couple $(\delta,\gamma)\in \ES{D}(\bs{T}')$. L'élément $\gamma$ de $G(F)$ est semisimple et {\it fortement régulier} (dans $G$), au sens où son centralisateur $G^\gamma$ dans $G$ est abélien et son centralisateur connexe $G_\gamma = G^{\gamma,\circ}$ est un tore. Le commutant $T_0$ de $G_\gamma$ dans $G$ est un tore maximal de $G$, défini sur $F$, et on a
$$
G_\gamma = T_0^{\theta_\gamma,\circ}.
$$
Il y a un homomorphisme naturel $G_\gamma(F)\rightarrow T'(F)\;(=T'_\delta(F))$, qui est un revêtement sur son image. C'est la restriction à $G_\gamma(F)$ du $F$--morphisme $\xi_{T_0,T'}: T_0\rightarrow T'$ obtenu en choisissant un diagramme $(\delta, T',T', B_0,T_0,\gamma)$ comme en \cite[I, 1.10]{MW}. 
On note $\mathfrak{t}_0={\rm Lie}(T_0)$ l'algèbre de Lie de $T$, et $\mathfrak{t}'={\rm Lie}(T')$ celle de $T'$. L'automorphisme $\theta_\gamma$ de $G$ donne un automorphisme de $\mathfrak{g}= {\rm Lie}(G)$, que l'on note encore $\theta_\gamma$; il est défini sur $F$ et stabilise $\mathfrak{t}_0$. Le sous--espace $\mathfrak{t}_0^{\theta_\gamma}$ de $\mathfrak{t}_0$ formé des points fixes sous $\theta_\gamma$ co\"{\i}ncide avec $\mathfrak{g}_\gamma={\rm Lie}(G_\gamma)$. L'homomorphisme naturel $\mathfrak{t}_0^{\theta_\gamma}(F)\rightarrow \mathfrak{t}'(F)$ est un homéomorphisme. De la mesure de Haar $dt'$ sur $T'(F)\;(= T'_\delta(F))$ se déduit une mesure sur $\mathfrak{t}'(F)$, que l'on transporte en une mesure sur $\mathfrak{t}_0^{\theta_\gamma}(F)$. On remonte cette dernière en une mesure de Haar $dg_\gamma$ sur $T_0^{\theta_\gamma,\circ}(F)$. Ces choix de mesures permettent de définir, pour toute fonction $f\in C^\infty_{\rm c}(\wt{G}(F))$, l'intégrale orbitale ordinaire
$$
I^{\wt{G}}(\gamma,\omega,f)= D^{\wt{G}}(\gamma)\int_{G_\gamma(F)\backslash G(F)}\omega(g)f(g^{-1}\gamma g) \textstyle{dg \over dg_\gamma},
$$
où l'on a posé
$$
D^{\wt{G}}(\gamma)= \vert \det(1-{\rm ad}_\gamma); \mathfrak{g}(F)/\mathfrak{t}_0^{\theta_\gamma}(F))\vert_F.
$$
Ici $\vert\;\vert_F$ est la valeur absolue normalisée sur $F$. 
On pose aussi 
$$
I^{\wt{G}}(\bs{T}'\!,\delta, f)= d(\theta^*)^{1/2}\sum_{\gamma}[G^\gamma(F): G_\gamma(F)]^{-1} \Delta(\delta,\gamma) I^{\wt{G}}(\gamma,\omega, f)\leqno{(1)}
$$
où $\gamma$ parcourt les éléments semisimples fortement réguliers de $\wt{G}(F)$ modulo conjugaison par $G(F)$ (on a 
$\Delta(\delta,\gamma)= 0$ si $(\delta,\gamma)\notin \ES{D}(\bs{T}')$), et où $d(\theta^*)=d(\theta_{\ES{E}})$ est le facteur de normalisation 
défini par
$$
d(\theta^*)= \vert \det(1-\theta_{\ES{E}}; \mathfrak{t}(F)/ \mathfrak{t}^{\theta_{\ES{E}}}(F))\vert_F, \quad \mathfrak{t}={\rm Lie}(T).
$$

On rappelle qu'un élément $\delta\in \wt{T}'(F)$ est dit {\it fortement $\wt{G}$--régulier} s'il existe un élément semisimple fortement régulier 
$\gamma\in \wt{G}(F)$ tel que $(\delta, \gamma)\in \ES{D}(\bs{T}')$. Le lemme fondamental qui nous intéresse ici affirme que la fonction caractéristique de $\wt{K}'$, que l'on note $\bs{1}_{\smash{\wt{K}'}}$, est un transfert de $\bs{1}_{\smash{\wt{K}}}$ à $\wt{T}'(F)$:

\begin{montheo}
Pour tout élément fortement $\wt{G}$--régulier $\delta\in \wt{T}'(F)$, on a l'égalité
$$\bs{1}_{\smash{\wt{K}'}}(\delta)= 
I^{\wt{G}}(\bs{T}'\!,\delta, \bs{1}_{\smash{\wt{K}}}).
$$
\end{montheo} 

Ce théorème sera démontré dans la section \ref{réduction au résultat de Hales}. Dans cette section \ref{relations entre différents LF}, on le relie au même énoncé pour le $G_{\rm AD}$--espace tordu $\wt{G}_{\rm AD}= \wt{G}/Z(G)\;(= Z(G)\backslash \wt{G})$, où $G_{\rm AD}$ est le groupe adjoint de $G$. 

\subsection{Réalisation du tore $T_0$}\label{réalisation du tore T_0}
La donnée $\bs{T}'=(T',\ES{T}',\tilde{s})\in \mathfrak{E}_{\rm t-nr}$ est fixée pour toute cette section \ref{relations entre différents LF}.  Rappelons que $\tilde{s}=s\hat{\theta}$ avec $s\in \hat{T}$. 

D'après \cite[I, 6.1.(5)]{MW}, il existe des éléments $\epsilon\in Z(\wt{G},\ES{E};F^{\rm nr})$ et $t^*\in T(\mathfrak{o})$ tels que $t^*\epsilon \in \wt{K}$. On a donc $\theta_{\ES{E}}=\theta_\epsilon$. De $\ES{E}$ se d\'eduisent des paires de Borel \'epingl\'ees d'autres groupes. D'abord la paire de Borel épinglée $\ES{E}_{\rm sc}=(B_{\rm sc},T_{\rm sc},\{E_{\alpha}\}_{\alpha\in \Delta})$ du revêtement simplement connexe $G_{\rm SC}$ du groupe dérivé de $G$, où $B_{\rm sc}$ et $T_{\rm sc}$ sont les images réciproques de $B$ et $T$ par l'homomorphisme naturel $G_{\rm SC}\rightarrow G$. D'autre part, notons $G_{1}$ (resp. $B_{1}$, $T_{1}$) la composante neutre de $G^{\theta_\ES{E}}$ (resp. $B^{\theta_\ES{E}}$, $T^{\theta_\ES{E}}$). L'ensemble des racines simples $\Delta_{1}$ de $T_{1}$ dans $G_{1}$  relatif \`a $B_{1}$ s'identifie \`a celui des orbites dans $\Delta$ du groupe de permutations engendr\'e par $\theta_\ES{E}$. Pour une telle orbite $(\alpha)$, on note $E_{(\alpha)}$ la somme des $E_{\alpha}$ pour $\alpha$ dans l'orbite $(\alpha)$. La donn\'ee $\ES{E}_{1}=(B_{1},T_{1},\{E_{(\alpha)}\}_{(\alpha)\in \Delta_{1}})$ est une paire de Borel \'epingl\'ee de $G_{1}$. On en d\'eduit comme ci--dessus une paire de Borel \'epingl\'ee $\ES{E}_{1,sc}=(B_{1,{\rm sc}},T_{1,{\rm sc}},\{E_{(\alpha)}\}_{(\alpha)\in \Delta_1})$ de $G_{1,SC}$. Toutes ces paires sont d\'efinies sur $F$ et donnent naissance \`a des $\mathfrak{o}$--sch\'emas en groupes lisses, not\'es respectivement $\ES{K}_{\rm sc}$, $\ES{K}_{1}$, $\ES{K}_{1,{\rm sc}}$. On note $K_{\rm sc}$, $K_1$, $K_{1,{\rm sc}}$ les groupes des points $\mathfrak{o}$--rationnels $\ES{K}_{\rm sc}(\mathfrak{o})$, $\ES{K}_1(\mathfrak{o})$, $\ES{K}_{1,{\rm sc}}(\mathfrak{o})$. On introduit aussi les groupes $K^{\rm nr}=\ES{K}(\mathfrak{o})$, $K_{\rm sc}^{\rm nr}= \ES{K}_{\rm sc}(\mathfrak{o}^{\rm nr})$, etc. On a des homomorphismes naturels, par exemple $K_{\rm sc}\rightarrow K$ et $K_{1,{\rm sc}}\to K_{1}\to K$. 

Soit $k\in K_{1,{\rm sc}}^{\rm nr}$. Posons $\ES{E}_{0}={\rm Int}_{k^{-1}}(\ES{E})$ --- c'est une paire de Borel épinglée de $G$ définie sur $F^{\rm nr}$ --- et notons $(B_{0},T_{0})$ la paire de Borel de $G$ sous--jacente à $\ES{E}_0$. Puisque $\theta_{\ES{E}}(k)=k$, l'élément $\epsilon$ appartient encore \`a $Z(\wt{G},\ES{E}_{0})$. On a donc $\theta_{\ES{E}_0}=\theta_\epsilon =\theta_{\ES{E}}$. Posons $\wt{T}_{0}=T_{0}\epsilon$. C'est le normalisateur dans $\wt{G}$ de la paire $(B_{0},T_{0})$. De cette paire se d\'eduit un homomorphisme $\xi_0=\xi_{T_0,T'}:T_{0}\to T'$ qui se prolonge en une application $\tilde{\xi}_0:\wt{T}_{0}\to \wt{T}'$. La preuve du lemme de \cite[I, 6.2]{MW} montre que l'on peut choisir $k\in K_{1,{\rm sc}}^{\rm nr}$ de sorte que $T_{0}$ et $\xi_0$ soient d\'efinis sur $F$. Ces conditions entra\^{\i}nent automatiquement que:
\begin{itemize}
\item le tore $T_0$ est non ramifi\'e; 
\item $\wt{T}_0$ et $\tilde{\xi}_0$ sont d\'efinis sur $F$; 
\item on a les égalités $T_{0}(F)\cap K=T_{0}(\mathfrak{o})$ et $\wt{T}_{0}(F)\cap \wt{K}=\wt{T}_{0}(F)\cap T_{0}(\mathfrak{o}^{\rm nr})\epsilon$; 
\item l'ensemble $\wt{T}_{0}(\mathfrak{o}):=\wt{T}_{0}(F)\cap \wt{K}$ n'est pas vide, et on a l'inclusion $\tilde{\xi}_0(\wt{T}_{0}(\mathfrak{o}))\subset \wt{K}'$. 
\end{itemize} 
La suite
$$
1\rightarrow (1-\theta_{\ES{E}})(T_{0})\rightarrow T_{0}\buildrel \xi_0\over{\longrightarrow}T'\to 1
$$
est exacte et les tores en question sont non ramifi\'es. Le th\'eor\`eme de Lang entra\^{\i}ne alors que $\xi_0$  se restreint en une surjection de $T_{0}(\mathfrak{o})$ sur $T'(\mathfrak{o})$. La derni\`ere assertion ci--dessus se renforce donc en l'\'egalit\'e $\tilde{\xi}_0(\wt{T}_{0}(\mathfrak{o}))= \wt{K}'$. 

On fixe pour la suite de cette section \ref{relations entre différents LF} un \'el\'ement $k\in K_{1,{\rm sc}}^{\rm nr}$ v\'erifiant les conditions ci--dessus, d'o\`u les diff\'erents objets $T_{0}$, $\wt{T}_0$, $\xi_0$, $\tilde{\xi}_0$ (etc.) que l'on vient d'introduire. 

Pour alléger l'écriture, on note désormais simplement $\theta$ le $F$--automorphisme $\theta_{\ES{E}}=\theta_{\ES{E}_0}$ de $G$. 
Remarquons que $\theta\vert_{T_0}= \theta_\gamma\vert_{T_0}$ pour tout $\gamma\in \wt{T}_0$.

\begin{monlem}
Pour tout élément fortement $\wt{G}$--régulier $\delta\in \wt{T}(F)$ tel que $\delta\notin \wt{K}'$, on a l'égalité
$$I^{\wt{G}}(\bs{T}'\!,\delta,\bs{1}_{\wt{K}})=0.
$$
\end{monlem}

\begin{proof}Supposons $\delta\notin \wt{K}'$. Il suffit de vérifier qu'aucun élément semisimple fortement régulier $\gamma\in \wt{K}$ ne correspond à $\delta$ (pour la correspondance donnée par l'appartenance à l'ensemble $\ES{D}(\bs{T}')$). D'apr\`es la construction ci--dessus, on peut fixer des \'el\'ements $\delta_0\in \wt{K}'$ et $\gamma_0\in \wt{T}_{0}(\mathfrak{o})\subset \wt{K}$ tels que $(\delta_0,\gamma_0)\in \ES{D}(\bs{T}')$. On utilise les homomorphismes d'Harish--Chandra
$$
H_{\smash{\wt{G}}}:G(F)\to \ES{A}_{\smash{\wt{G}}},\quad
H_{\smash{\wt{T}'}}=H_{T'}:T'(F)\to \ES{A}_{T'}=\ES{A}_{\smash{\wt{T}'}},
$$
définis de la manière habituelle (cf. \cite[II, 1.6]{MW}). On peut d\'efinir des applications
$$
\wt{H}_{\smash{\wt{G}}}:\wt{G}(F)\to \ES{A}_{\smash{\wt{G}}},\quad 
\wt{H}_{T'}:\wt{T}'(F)\to \ES{A}_{T'}$$
compatibles \`a ces homomorphismes en posant simplement
$$
\wt{H}_{\smash{\wt{G}}}(\gamma_0)=0,\quad 
\wt{H}_{T'}(\delta_0)=0.
$$  En choisissant une uniformisante $\varpi_{F}$ de $F$, le groupe $T'(F)$ s'identifie au produit de $T'(\mathfrak{o}_{F})$ et de ${\rm X}_{*}(T')^{\Gamma_{F}}$. La restriction de $H_{T'}$ \`a ce dernier groupe est injective. L'hypoth\`ese sur $\delta$ implique donc $\wt{H}_{T'}(\delta)\not=0$.  D'autre part, on a un isomorphisme $\ES{A}_{\smash{\wt{G}}}\simeq \ES{A}_{T'}$. Modulo cet isomorphisme, on sait que, pour tout $ \gamma\in \wt{G}(F)$ semisimple fortement régulier correspondant \`a $\delta$, on a $\wt{H}_{\smash{\wt{G}}}(\gamma)=\wt{H}_{T'}(\delta)$. Donc $\wt{H}_{\smash{\wt{G}}}(\gamma)\not=0$. Mais, puisque $\wt{K}=K\gamma_0$, $\wt{H}_{\smash{\wt{G}}}$ est nulle sur $\tilde{K}$, ce qui d\'emontre l'assertion.
\end{proof}

\subsection{Classes de conjugaison stable}\label{classes de conjugaison stable}
On fixe des éléments $\delta_0\in \wt{K}'$ et $\gamma_0\in \wt{T}_0(\mathfrak{o})$ tels que $\xi_0(\gamma_0)= \delta_0$. On suppose $\gamma_0$ fortement régulier (dans $\wt{G}$), \cad que $(\delta_0,\gamma_0)$ appartient à $\ES{D}(\bs{T}')$. Ce couple $(\delta_0,\gamma_0)\in \ES{D}(\bs{T}')\cap (\wt{K}'\times \wt{T}_0(\mathfrak{o}))$ sera conservé jusqu'à la fin 
de cette section \ref{relations entre différents LF} 
(excepté en \ref{calcul plus général} où nous affaiblirons les hypothèses sur $\bs{T}'$). 

Posons
$$
\ES{Y}=\{g\in G: g\sigma(g)^{-1}\in T_0^\theta,\, \forall \sigma \in \Gamma_F\}.
$$
L'application $g\mapsto g^{-1}\gamma_0 g$ se quotiente en une bijection de $T_0^\theta \backslash \ES{Y}$ sur la classe de conjugaison stable de $\gamma_0$, autrement dit sur l'ensemble des éléments $\gamma\in \wt{G}(F)$ tels que $(\delta_0,\gamma)\in \ES{D}(\bs{T}')$. Rappelons que l'on définit une application
$$
q:\ES{Y}\rightarrow {\rm H}^{1,0}(\Gamma_F;T_{0,{\rm sc}} \xrightarrow{1-\theta} (1-\theta)(T_0))
$$
de la fa\c{c}on suivante. Pour $g\in \ES{Y}$, on écrit $g= z\pi(g_{\rm sc})$ avec $z\in Z(G)$ et $g_{\rm sc}$, où $\pi:G_{\rm SC}\rightarrow G$ est l'homomorphisme naturel. Pour $\sigma\in \Gamma_F$, on pose $\alpha(\sigma)= g_{\rm sc}\sigma(g_{\rm sc})^{-1}$. L'application $q$ envoie $g$ sur la classe du couple $(\alpha, (1-\theta)(z))$. Elle se quotiente en une bijection
$$
\bar{q}:T_0^\theta\backslash \ES{Y}/\pi(G_{\rm SC}(F)) \xrightarrow{\simeq}{\rm H}^{1,0}(\Gamma_F;T_{0,{\rm sc}} \xrightarrow{1-\theta} (1-\theta)(T_0)).
$$
On a un diagramme de complexes de tores
$$
\xymatrix{T_{0,{\rm sc}} \ar[r]^(.4){1-\theta} \ar[d]_{1-\theta} & (1-\theta)(T_0) \ar[d]\\
T_{0,{\rm sc}} \ar[r] & T_0}.\leqno{(1)}
$$
D'où un homomorphisme
$$
\varphi: {\rm H}^{1,0}(\Gamma_F;T_{0,{\rm sc}} \xrightarrow{1-\theta} (1-\theta)(T_0))
\rightarrow {\rm H}^{1,0}(\Gamma_F; T_{0,{\rm sc}}\rightarrow T_0).
$$
Le tore $T_0$ étant non ramifié, d'après \cite[C.1]{KS}, le groupe $ {\rm H}^{1,0}(\Gamma_F; T_{0,{\rm sc}}\rightarrow T_0)$ contient un sous--groupe 
$$
 {\rm H}^{1,0}(\mathfrak{o}; T_{0,{\rm sc}}\rightarrow T_0):= {\rm H}^{1,0}(\Gamma_F/I_F;T_{0,{\rm sc}}(\mathfrak{o}^{\rm nr})\rightarrow T_0(\mathfrak{o}^{\rm nr})).
 $$
 On note $\ES{Y}_{\rm c}\subset \ES{Y}$ l'image réciproque de ce sous--groupe par l'application $\varphi\circ q$.
 
 \begin{monlem}
 Soit $g\in \ES{Y}$ tel que $g^{-1}\gamma_0 g \in \wt{K}$. Alors $g\in \ES{Y}_{\rm c}$.
 \end{monlem}
 
 \begin{proof}
 On doit commencer par quelques rappels. On définit un homomorphisme
 $$
 \iota: G(F)\rightarrow {\rm H}^{1,0}(\Gamma_F; T_{0,{\rm sc}}\rightarrow T_0)
 $$
 de la fa\c{c}on suivante. Pour $g\in G(F)$, on écrit $g=z\pi(g_{\rm sc})$ avec $z\in Z(G)$ et $g_{\rm sc}\in G_{\rm SC}$. Pour $\sigma\in \Gamma_F$, on pose $\alpha(\sigma)= g_{\rm sc}\sigma(g_{\rm sc})^{-1}$. L'homomorphisme $\iota$ envoie $g$ sur la classe du couple $(\alpha,z)$. Il se quotiente en un isomorphisme
 $$
 \bar{\iota}:G_{\rm ab}(F):= G(F)/\pi(G_{\rm SC}(F))\xrightarrow{\simeq} {\rm H}^{1,0}(\Gamma_F; T_{0,{\rm sc}}\rightarrow T_0).
 $$
 Montrons que:
 \begin{enumerate}
 \item[(2)]Les sous--groupes $K$, $T(\mathfrak{o})$ et $T_0(\mathfrak{o})$ de $G(F)$ ont même image dans $G_{\rm ab}(F)$, disons 
$G_{\rm ab}(\mathfrak{o})$, et on a $\bar{\iota}(G_{\rm ab}(\mathfrak{o}))= {\rm H}^{1,0}(\mathfrak{o};T_{0,{\rm sc}}\rightarrow T_0)$. 
 \end{enumerate}
D'après la théorie de Bruhat--Tits, le groupe $K$ est engendré par $T(\mathfrak{o})$, par des sous--groupes compacts des sous--groupes unipotents de $G(F)$ associés aux racines de $T_{\rm d}$ --- le sous--tore déployé maximal de $T$ --- dans $G$, et par des relèvements dans $K$ des éléments du groupe de Weyl $W^G(T)$. Or ces deux derniers types d'éléments (ceux dans les sous--groupes unipotents associés aux racines de $T_{\rm d}$ dans $G$, et les relèvements des éléments de $W^G(T)$) appartiennent à $\pi(K_{\rm sc})$. Donc $K$ et $T(\mathfrak{o})$ ont même image dans $G_{\rm ab}(F)$. D'après \cite[VII, 1.5.(3)]{MW}, $T(\mathfrak{o})$ et $T_0(\mathfrak{o})$ ont aussi même image dans $G_{\rm ab}(F)$. Dans loc.~cit., les hypothèses de non--ramification sont plus fortes que celles que l'on impose ici, mais pour la propriété ci--dessus, nos présentes hypothèses sont suffisantes. Enfin d'après \cite[lemma C.1.A]{KS}, l'image de $T_0(\mathfrak{o})$ dans ${\rm H}^{1,0}(\Gamma_F;T_{0,{\rm sc}}\rightarrow T_0)$ est égale à ${\rm H}^{1,0}(\mathfrak{o}; T_{0,{\rm sc}}\rightarrow T_0)$. Cela prouve (2).

Soit $g\in \ES{Y}$. Alors $g^{-1}{\rm Int}_{\gamma_0}(g)$ appartient à $G(F)$. Un calcul facile montre que
$$\varphi\circ q(g)= \iota(g^{-1}{\rm Int}_{\gamma_0}(g))^{-1}.
$$
Supposons $g^{-1}\gamma_0 g\in \wt{K}$. Puisque $g^{-1}\gamma_0 g = g^{-1}{\rm Int}_{\gamma_0}(g)\gamma_0$ et $\gamma_0\in\wt{K}$, on a $g^{-1}{\rm Int}_{\gamma_0}(g)\in K$. Donc $\iota(g^{-1}{\rm Int}_{\gamma_0}(g))$ appartient au groupe ${\rm H}^{1,0}(\mathfrak{o};T_{0,{\rm sc}}\rightarrow T_0)$, et $\varphi\circ q(g)$ appartient à ce groupe, ce qui signifie que $g$ appartient à $\ES{Y}_{\rm c}$.
 \end{proof}

\subsection{Calcul d'un facteur de transfert}\label{calcul d'un facteur de transfert}
On va calculer le facteur de transfert $\Delta(\delta_0,\gamma_0)$. On identifie le tore dual $\hat{T}_0$ de $T_0$ à $\hat{T}$ muni d'une action galoisienne tordue par un cocycle à valeurs dans le groupe $W^{\hat{\theta}}$, où $W$ est le groupe de Weyl $W^G(T)$ que l'on a identifié à $W^{\hat{G}}(\hat{T})$ comme en \cite[2.2]{LMW}. Le groupe $\hat{T}'$ s'identifie à $\hat{T}_0^{\hat{\theta},\circ}$ et cette identification est cette fois équivariante pour les actions galoisiennes. On doit fixer des $a$--data et des $\chi$--data pour l'action de $\Gamma_F$ sur $T_0$ (ou $\hat{T}_0$). Les extensions du corps $F$ qui interviennent dans la définition de ces termes sont non ramifiées. On peut supposer --- et on suppose --- que les données sont non ramifiées, \cad que pour toute racine $\alpha$ de $T_0$ dans $G$, l'élément $a_\alpha$ appartient à $\mathfrak{o}^{{\rm nr},\times}$ et le caractère $\chi_\alpha$ est non ramifié.

\begin{monlem}
On a l'égalité
$$
\Delta(\delta_0,\gamma_0)= \Delta_{\rm II}(\delta_0,\gamma_0).
$$
\end{monlem}

\begin{proof}On utilise la formule de \cite[I, 6.3]{MW}
$$
\Delta(\delta_0,\gamma_0)= \Delta_{\rm II}(\delta_0,\gamma_0)\tilde{\lambda}_\zeta(\delta_0)^{-1}\tilde{\lambda}_z(\gamma_0)
\langle (V_{T_0},\nu_{\rm ad}),(t_{T_0,{\rm sc}},s_{\rm ad})\rangle^{-1}.\leqno{(1)}
$$
Ici $\zeta_1=\zeta$ (et $\delta_1=\delta_0$) puisqu'on a pris des données auxiliaires ``triviales''. Les applications $\tilde{\lambda}_\zeta:\wt{T}'(F)\rightarrow {\Bbb C}^\times$ et $\tilde{\lambda}_z: \wt{G}(F)\rightarrow {\Bbb C}^\times$ sont des caractères affines non ramifiés, qui valent $1$ sur $\wt{K}'$ et sur $\wt{K}$. Puisque $\delta_0\in \wt{K}'$ et $\gamma_0\in \wt{T}_0(\mathfrak{o})\subset \wt{K}$, on a
$$
\tilde{\lambda}_\zeta(\delta_0)=\tilde{\lambda}_z(\gamma_0)=1.
$$
Il reste à calculer le produit $\langle (V_{T_0},\nu_{\rm ad}),(t_{T_0,{\rm sc}},s_{\rm ad})\rangle$ pour l'accouplement
$$
{\rm H}^{1,0}(\Gamma_F;T_{0,{\rm sc}} \xrightarrow{1-\theta} T_{0,{\rm ad}})\times {\rm H}^{1,0}(W_F;\hat{T}_{0,{\rm sc}}
\xrightarrow{1-\hat{\theta}}\hat{T}_{0,{\rm ad}})\rightarrow {\Bbb C}^\times.
$$
Le terme $\nu$ est l'élément de $T_0$ tel que $\gamma_0=\nu\epsilon$, et $\nu_{\rm ad}$ est son image dans $T_{0,{\rm ad}}$. Il résulte des définitions que $\nu\in T_0(\mathfrak{o}^{\rm nr})$, d'où $\nu_{\rm ad}\in T_{0,{\rm ad}}(\mathfrak{o}^{\rm nr})$. Pour $\sigma\in \Gamma_F$, on a une égalité
$$
V_{T_0}(\sigma)= r_{T_0}(\sigma)n_{\ES{E}_0}(\omega_{T_0}(\sigma))u_{\ES{E}_0}(\sigma).
$$
Sur $F^{\rm nr}$, le sous--groupe $K_{\rm sc}^{\rm nr}=\ES{K}_{\rm sc}(\mathfrak{o}^{\rm nr})$ de $G_{\rm SC}(F^{\rm nr})$ associé à la paire de Borel épinglée $\ES{E}_{\rm sc}$ de $G_{\rm SC}$ est aussi celui associé à la paire de Borel épinglée $\ES{E}_{0,{\rm sc}}$ de $G_{\rm SC}$: cela résulte de la définition $\ES{E}_0= {\rm Int}_{k^{-1}}(\ES{E})$ avec $k\in K_{1,{\rm sc}}^{\rm nr}$. Il est clair qu'alors 
$n_{\ES{E}_0}(\omega_{T_0}(\sigma))\in K_{\rm sc}^{\rm nr}$. Pour la même raison et parce que les $a$--data sont des unités, on a $r_{T_0}(\sigma)\in T_{0,{\rm sc}}(\mathfrak{o}^{\rm nr})$. Enfin, il résulte des définitions que l'on peut choisir $u_{\ES{E}_0}(\sigma)=k^{-1}\sigma(k)$. Donc $V_{T_0}(\sigma)\in K_{\rm sc}^{\rm nr}$. On sait de plus que ce terme $V_{T_0}(\sigma)$ appartient à $T_{0,{\rm sc}}(\overline{F})$, donc il appartient à $T_{0,{\rm sc}}(\mathfrak{o}^{\rm nr})$. Alors la classe du cocycle $(V_{T_0},\nu_{\rm ad})$ appartient au groupe ${\rm H}^{1,0}(\mathfrak{o}; T_{0,{\rm sc}}\xrightarrow{1-\theta} T_{0,{\rm ad}})$, cf. \cite[C1]{KS}. Or, parce que les $\chi$--data sont non ramifiées, le cocycle $(t_{T_0,{\rm sc}},s_{\rm ad})$ est lui aussi non ramifié, \cad que sa classe appartient au groupe 
${\rm H}^{1,0}(W_F^{\rm nr},\hat{T}_{0,{\rm sc}}\xrightarrow{1-\hat{\theta}}\hat{T}_{0,{\rm ad}})$. On sait que l'accouplement de deux tels cocycles non ramifiés vaut $1$. Donc $\langle (V_{T_0},\nu_{\rm ad}),(t_{T_0,{\rm sc}},s_{\rm ad})\rangle=1$, ce qui achève la preuve. 
\end{proof}

\subsection{Calcul plus général}\label{calcul plus général}
On va calculer le facteur $\Delta(\delta_0, g^{-1}\gamma_0 g)$ pour $g\in \ES{Y}$. Parce que nous en aurons besoin plus loin, affaiblissons les hypothèses sur $\bs{T}'$: dans ce numéro, on ne suppose plus que la donnée endoscopique $\bs{T}'=(T',\ES{T}',\tilde{s})$ pour $(\wt{G},\omega)$ est non ramifiée (en particulier on ne suppose plus ``$\bs{T}'\in \mathfrak{E}_{\rm t-nr}$''), mais on suppose encore que $T'$ est un tore et que $\ES{T}'$ est identifié au $L$--groupe ${^LT'}$ de $T'$, \cad que l'on suppose donné un isomorphisme
$$
{^LT'}\rightarrow \ES{T}',\, (t,w)\mapsto (th(w),w).
$$
On suppose aussi que la donnée $\bs{T}'$ est relevante, \cad que l'ensemble $\ES{D}(\bs{T}')$ n'est pas vide, et on fixe un couple $(\delta,\gamma)\in \ES{D}(\bs{T}')$. On note $T_0$ le commutant de $G_\gamma=G^{\gamma,\circ}$ dans $G$ (on a donc $T_0^{\theta_\gamma}=G^\gamma$), et comme en \ref{classes de conjugaison stable}, on pose $\ES{Y}=\{g\in G: g\sigma(g)^{-1}\in T_0^{\theta_\gamma}, \, \forall\sigma\in \Gamma_F\}$. 
Comme précédemment, l'identification de $\ES{T}'$ à ${^LT'}$ nous dispense d'utiliser des données auxiliaires. Il n'y a plus de choix naturel de facteur de transfert comme dans le cas où la donnée $\bs{T}'$ est non ramifiée, mais, pour $g\in \ES{Y}$, le rapport 
$\Delta(\delta,g^{-1}\gamma g)\Delta(\delta,\gamma)^{-1}$ est bien défini.

Définissons la cochaîne $t_0:W_F\rightarrow\hat{T}_0/\hat{T}_0^{\hat{\theta},\circ}$ qui, à $w\in W_F$, associe l'image dans $\hat{T}_0/\hat{T}_0^{\hat{\theta},\circ}$ de $\hat{r}_{T_0}(w)\hat{n}(\omega_{T_0}(w))h(w)^{-1}$ --- cf. \cite[I, 2.2]{MW} pour la définition des deux premiers termes. Le couple $(t_0,s_{\rm ad})$ est un cocycle qui définit un élément de ${\rm H}^{1,0}(W_F;\hat{T}_0/\hat{T}_0^{\hat{\theta},\circ}\xrightarrow{1-\hat{\theta}} \hat{T}_{0,{\rm ad}})$. On a un accouplement
$$
{\rm H}^{1,0}(\Gamma_F;T_{0,{\rm sc}}\xrightarrow{1-\theta_\gamma} (1-\theta_\gamma)(T_0))\times
{\rm H}^{1,0}(W_F;\hat{T}_0/\hat{T}_{0}^{\hat{\theta},\circ}\xrightarrow{1-\hat{\theta}}\hat{T}_{0,{\rm ad}})\rightarrow {\Bbb C}^\times. 
$$
On définit comme en \ref{classes de conjugaison stable} une application
$$
q:\ES{Y}\rightarrow {\rm H}^{1,0}(\Gamma_F;T_{0,{\rm sc}} \xrightarrow{1-\theta_\gamma} (1-\theta_\gamma)(T_0)).
$$

\begin{monlem}
Pour tout $g\in \ES{Y}$, on a l'égalité
$$
\Delta(\delta,g^{-1}\gamma g)\Delta(\delta, \gamma)^{-1} = \langle q(g),(t_0,s_{\rm ad})\rangle.
$$
\end{monlem}

\begin{proof}
C'est le contenu du théorème 5.1.D.(1) de \cite{KS}. La formule se retrouve aussi par un calcul sans mystère en appliquant les définitions de \cite[I, 2.2]{MW}.
\end{proof}
\subsection{Comparaison de deux intégrales endoscopiques}
\label{comparaison de deux intégrales endoscopiques}
Reprenons les hypothèses d'avant \ref{calcul plus général}, \cad la donnée $\bs{T}'\in \mathfrak{E}_{\rm t-nr}$ et le couple $(\delta_0,\gamma_0)\in \ES{D}(\bs{T}')\cap (\wt{K}'\times \wt{T}_0(\mathfrak{o}))$.

Soit $z\in Z(\hat{G})$. Notons $\ES{T}'_z$ le sous--groupe de ${^LG}$ engendré par $\hat{T}^{\hat{\theta},\circ}$, par $I_F$ et par l'élément $(zh,\phi)$. Le triplet $\bs{T}'_{\!z}=(T',\ES{T}'_z,\tilde{s})$ est une donnée endoscopique elliptique et non ramifiée pour $(\wt{G},\omega_z)$, où $\omega_z$ est le produit de $\omega$ et du caractère (non ramifié) de $G(F)$ correspondant au cocycle non ramifié de $W_F$ à valeurs dans $Z(\hat{G})$ dont la valeur en $\phi$ est $\hat{\theta}(z)z^{-1}$. En particulier, on définit $I^{\wt{G}}(\bs{T}'_{\!z},\delta_0,\bs{1}_{\wt{K}})$ comme on a défini $I^{\wt{G}}(\bs{T}'\!,\delta_0,\bs{1}_{\wt{K}})$.

\begin{monlem}
On a l'égalité
$$
I^{\wt{G}}(\bs{T}'_{\!z},\delta_0,\bs{1}_{\wt{K}})=
I^{\wt{G}}(\bs{T}'\!,\delta_0,\bs{1}_{\wt{K}}).
$$
\end{monlem}

\begin{proof}
On affecte d'un indice $z$ les termes relatifs à la donnée $\bs{T}'_{\!z}$. Quand on remplace $\bs{T}'$ par $\bs{T}'_{\!z}$, la seule chose qui change est le facteur de transfert. On doit prouver que, pour $g\in \ES{Y}$ tel que $g^{-1}\gamma_0 g\in \wt{K}$, on a l'égalité 
$\Delta_z(\delta_0,g^{-1}\gamma_0 g)= \Delta(\delta_0,g^{-1}\gamma_0 g)$. D'après le lemme de \ref{calcul d'un facteur de transfert}, cette égalité est vérifiée si $g=1$, les deux facteurs de transfert étant alors égaux au terme $\Delta_{\rm II}$. Il suffit donc de prouver l'égalité
$$
\Delta_z(\delta_0,g^{-1}\gamma_0 g)\Delta_z(\delta_0,\gamma_0)^{-1}=
\Delta(\delta_0,g^{-1}\gamma_0 g)\Delta(\delta_0,\gamma_0)^{-1},
$$
ou encore, d'après le lemme de \ref{calcul plus général}, l'égalité
$$
\langle q(g), (t_{0,z},s_{\rm ad})\rangle = \langle q(g), (t_0,s_{\rm ad})\rangle.
$$
D'après le lemme de \ref{réalisation du tore T_0}, il suffit de prouver cette égalité pour $g\in \ES{Y}_{\rm c}$. On a introduit en 
\ref{classes de conjugaison stable} l'homomorphisme
$$
\varphi: {\rm H}^{1,0}(\Gamma_F;T_{0,{\rm sc}} \xrightarrow{1-\theta} (1-\theta)(T_0))
\rightarrow {\rm H}^{1,0}(\Gamma_F; T_{0,{\rm sc}}\rightarrow T_0).
$$
Le diagramme de complexe de tores (1) de \ref{classes de conjugaison stable} donne dualement 
un diagramme de complexes de tores
$$
\xymatrix{
\hat{T}_0 \ar[r] \ar[d] & \hat{T}_{0,{\rm ad}} \ar[d]^{1-\hat{\theta}} \\
\hat{T}_0/\hat{T}_0^{\hat{\theta},\circ} \ar[r]_{1-\hat{\theta}} & \hat{T}_{0,{\rm ad}}
}.\leqno{(1)}
$$
Il s'en déduit un homomorphisme
$$
\hat{\varphi}: {\rm H}^{1,0}(W_F; \hat{T}_0\rightarrow \hat{T}_{0,{\rm ad}})\rightarrow 
{\rm H}^{1,0}(W_F; \hat{T}_0/\hat{T}_0^{\hat{\theta},\circ} \xrightarrow{1-\hat{\theta}} \hat{T}_{0,{\rm ad}}).
$$
D'après les définitions, on a
$$(t_{0,z},s_{\rm ad})=(t_0,s_{\rm ad})\hat{\varphi}(\underline{z},1)^{-1},
$$
où $\underline{z}$ est le cocycle non ramifié de $W_F$ à valeurs dans $\hat{T}_0$ tel que $\underline{z}(\phi)=z$. On doit donc prouver, pour $g\in \ES{Y}_{\rm c}$, que $\langle q(g),\hat{\varphi}(\underline{z},1)\rangle =1$, ou encore que 
$\langle \varphi\circ q(g),(\underline{z},1)\rangle =1$. Or la classe du cocycle 
$\varphi\circ q(g)$ appartient à ${\rm H}^{1,0}(\mathfrak{o}; T_{0,{\rm sc}}
\rightarrow T_0)$ tandis que celle du cocycle $(\underline{z},1)$ appartient à ${\rm H}^{1,0}(W_F^{\rm nr}; \hat{T}_0\rightarrow \hat{T}_{0,{\rm ad}})$. On sait que l'accouplement de deux tels cocycles vaut $1$, ce qui achève la preuve.
\end{proof}

\subsection{Données endoscopiques pour le groupe adjoint}\label{données endoscopiques pour G_AD}
On pose
$$\wt{G}_{\rm AD}= \wt{G}/Z(G)= Z(G)\backslash \wt{G}.
$$
C'est un espace tordu sous $G_{\rm AD}$, et on a une application naturelle
$\tilde{\rho}:\wt{G}\rightarrow \wt{G}_{\rm AD},\, \gamma\mapsto \gamma_{\rm ad}$ 
qui prolonge l'application naturelle $\rho: G\rightarrow G_{\rm AD}$. 
La paire de Borel épinglée $\ES{E}$ de $G$ se projette en une paire de Borel épinglée $\ES{E}_{\rm ad}$ de $G_{\rm AD}$, qui 
détermine un sous--groupe hyperspécial $K_{\rm ad}= \ES{K}_{\rm ad}(\mathfrak{o})$ de $G_{\rm AD}(F)$. On a introduit en 
\ref{réalisation du tore T_0} un élément $\epsilon\in Z(\wt{G},\ES{E})$, qui se projette en un élément $\epsilon_{\rm ad}\in Z(\wt{G}_{\rm AD},\ES{E}_{\rm ad})$. L'ensemble $Z(\wt{G}_{\rm AD},\ES{E}_{\rm ad})$ est défini sur $F$ (parce que la paire $\ES{E}_{\rm ad}$ l'est), et est réduit à un point parce que $G_{\rm AD}$ est adjoint. On a donc $Z(\wt{G}_{\rm AD},\ES{E}_{\rm ad})=\{\epsilon_{\rm ad}\}$ avec $\epsilon_{\rm ad}\in \wt{G}_{\rm AD}(F)$. Il en résulte que $\wt{G}_{\rm AD}(F)=G_{\rm AD}(F)\epsilon_{\rm ad}$ et que l'ensemble $\wt{K}_{\rm ad}= K_{\rm ad}\epsilon_{\rm ad}=\epsilon_{\rm ad} K_{\rm ad}$ est un sous--espace hyperspécial de $G_{\rm AD}(F)$. 
On a les inclusions $\rho(K)\subset K_{\rm ad}$ et $\tilde{\rho}(\wt{K})\subset \wt{K}_{\rm ad}$. Plus précisément, on tire des résultats de Bruhat--Tits que $\rho^{-1}(K_{\rm ad})\cap G(F)=Z(G;F)K$, d'où aussi
$$
\tilde{\rho}^{-1}(\wt{K}_{\rm ad})\cap \wt{G}(F)=Z(G;F)\wt{K}= \wt{K}Z(G;F).
$$

Rappelons que le $L$--groupe de $G_{\rm AD}$ est ${^L(G_{\rm AD})}=\hat{G}_{\rm SC}\rtimes W_F$. On a fixé une donnée $\bs{T}'=(T',\ES{T}',\tilde{s})\in \mathfrak{E}_{\rm t-nr}$ avec $\tilde{s}=s\hat{\theta}$, $s\in \hat{T}$, et un élément $(h,\phi)\in \ES{T}'$. Choisissons:
\begin{itemize}
\item une décomposition $h=z_h\hat{\rho}(h_{\rm sc})$ avec $z_h\in Z(\hat{G})$ et $h_{\rm sc}\in \hat{G}_{\rm SC}$;
\item un élément $s_{\rm sc}\in \hat{T}_{\rm sc}$ dont l'image dans $\hat{G}_{\rm AD}$ co\"{\i}ncide avec celle de $s$.
\end{itemize}
Ici $\hat{\rho}: \hat{G}_{\rm SC}\rightarrow \hat{G}$ est l'homomorphisme naturel, dual de $\rho$. On pose $\tilde{s}_{\rm sc}=s_{\rm sc}\hat{\theta}$ et on note $\ES{T}'_{\rm sc}$ le sous--groupe de ${^L(G_{\rm AD})}$ engendré par le groupe $\hat{T}_{\rm sc}^{\hat{\theta}}$ 
(qui est connexe), par le groupe d'inertie $I_F\subset W_F$, et par l'élément $(h_{\rm sc},\phi)$. On note $T'_{\rm ad}$ le tore $T_{0,{\rm ad}}/(1-\theta)(T_{0,{\rm ad}})$. Le triplet $\bs{T}'_{\!\rm ad}=(T'_{\rm ad},\ES{T}'_{\rm sc}, \tilde{s}_{\rm sc})$ est une donnée endoscopique elliptique et non ramifiée pour $(G_{\rm AD},\omega'_{\rm ad})$, pour un certain caractère $\omega'_{\rm ad}$ de $G_{\rm AD}(F)$ défini comme suit. Ecrivons $s=z_s\hat{\rho}(s_{\rm sc})$ avec $z_s\in Z(\hat{G})$. L'égalité
$$
s\hat{\theta}(h)= a(\phi)h \phi(s)
$$
devient
$$
\hat{\rho}(s_{\rm sc}\hat{\theta}(h_{\rm sc}))= z_h \hat{\theta}(z_h)^{-1} \phi(z_s)z_s^{-1}a(\phi) \hat{\rho}(h_{\rm sc}\phi(s_{\rm sc})).
$$
L'élément
$$
a'_{\rm sc}= s_{\rm sc}\hat{\theta}(h_{\rm sc})\phi(s_{\rm sc})^{-1}h_{\rm sc}^{-1}
$$
appartient à $Z(\hat{G}_{\rm SC})$, et on a
$$
\hat{\rho}(a'_{\rm sc})= z_h \hat{\theta}(z_h)^{-1} \phi(z_s)z_s^{-1}a(\phi).\leqno{(1)}
$$ 
L'élément $a'_{\rm sc}$ définit un cocycle non ramifié de $W_F$ à valeurs dans $Z(\hat{G}_{\rm SC})$ dont la valeur en $\phi$ est $a'_{\rm sc}$. La classe de cohomologie de ce cocycle, disons $\bs{a}'_{\rm sc}\in {\rm H}^1(W_F,Z(\hat{G}_{\rm SC}))$, correspond au caractère 
$\omega'_{\rm ad}$ de $G_{\rm AD}(F)$. Le caractère $\omega'_{\rm ad}$, et a fortiori la classe d'isomorphisme de la donnée $\bs{T}'_{\!\rm ad}$, dépendent des choix (de l'élement $(h,\phi)\in \ES{T}$ et de la décomposition $h=z_h\hat{\rho}(h_{\rm sc})$). On reviendra plus loin sur ces choix (cf. la remarque 3).

\begin{marema1}
{\rm \`A cause du $z_h$ ci--dessus, $\omega$ n'est en général pas le composé du caractère $\omega'_{\rm ad}$ et de l'homomorphisme 
$\rho_F:G(F)\rightarrow G_{\rm AD}(F)$. Soit $\eta_{z_h}$ le caractère de $G(F)$ associé au cocycle non ramifié de $W_F$ à valeurs dans $Z(\hat{G})$ dont la valeur en $\phi$ est $\hat{\theta}(z_h)z_h^{-1}$. D'après (1), le caractère 
$\omega'_{\rm ad}\circ \rho_F$ de $G(F)$ est donné par
$$
\omega'_{\rm ad}\circ \rho_F = \omega\cdot \eta_{z_h}^{-1}.
$$
Rappelons qu'à la donnée $\bs{T}'$ est associé un caractère non ramifié $\omega'_\sharp$ du groupe $G_\sharp(F)$, où l'on a posé $G_\sharp=G/Z(G)^\theta$. Ce caractère correspond à un cocycle non ramifié $a'_\sharp$ de $W_F$ à valeurs dans $Z(\hat{G}_\sharp)$, qui est déterminé par la projection de $a'_\sharp(\phi)\in Z(\hat{G}_\sharp)$ sur $Z(\hat{G}_\sharp)/(1-\phi)(Z(\hat{G}_\sharp))$. D'après la description de $Z(\hat{G}_\sharp)$ donnée dans \cite[I, 2.7]{MW}, on a un morphisme surjectif
$$
Z(\hat{G})/(Z(\hat{G})\cap \hat{T}^{\hat{\theta},\circ})\times Z(\hat{G}_{\rm SC})\rightarrow Z(\hat{G}_\sharp),\leqno{(2)}
$$
et il suffit de prendre pour $a'_\sharp(\phi)$ l'image de $(z_h,a'_{\rm sc})$ dans $Z(\hat{G}_\sharp)$ par ce morphisme. La projection naturelle $G\rightarrow G_\sharp$ induit un homomorphisme injectif $G(F)/Z(G;F)^\theta\rightarrow G_\sharp(F)$ qui, composé avec le 
caractère $\omega'_\sharp$, redonne $\omega$. On voit aussi que $\omega'_\sharp$ est le produit de deux caractères de $G_\sharp(F)$:
\begin{itemize}
\item le composé de $\omega'_{\rm ad}$ et de l'homomorphisme naturel $G_\sharp(F)\rightarrow G_{\rm AD}(F)$;
\item le caractère non ramifié de $G_\sharp(F)$ donné par l'image de $(z_h,1)$ dans $Z(\hat{G}_\sharp)$. 
\end{itemize}
En composant $\omega'_\sharp$ avec l'homomorphisme naturel $G(F)\rightarrow G_\sharp(F)$, on retrouve bien l'égalité
$\omega = (\omega'_{\rm ad}\circ \rho_F) \cdot \eta_{z_h}$.\hfill $\blacksquare$
}
\end{marema1}

Continuons avec la donnée $\bs{T}'_{\!\rm ad}$. On identifie $\ES{T}'_{\rm sc}$ au $L$--groupe ${^L(T'_{\rm ad})}$ comme en \ref{les hypothèses}.(1) grâce à l'élément $(h_{\rm sc},\phi)$. Il est clair que les applications $\xi_0$ et $\tilde{\xi}_0$ de \ref{réalisation du tore T_0} passent aux quotients et que l'on a des diagrammes commutatifs
$$
\xymatrix{
T_0 \ar[r]^{\xi_0} \ar[d] & T' \ar[d]\\
T_{0,{\rm ad}} \ar[r]_{\xi_{0,{\rm ad}}} & T'_{\rm ad}
},\quad 
\xymatrix{
\wt{T}_0 \ar[r]^{\tilde{\xi}_0} \ar[d] & \wt{T}' \ar[d] \\
\wt{T}_{0,{\rm ad}} \ar[r]_{\tilde{\xi}_{0,{\rm ad}}} & \wt{T}'_{\rm ad}
}.
$$
Le sous--espace hyperspécial $\wt{K}'_{\rm ad}$ de $\wt{T}'_{\rm ad}(F)$ associé à $\wt{K}_{\rm ad}$ est le produit $T'_{\rm ad}(\mathfrak{o})$ et de l'image naturelle de $\wt{K}'$ dans $\wt{T}'_{\rm ad}(F)$. 

On va construire d'autres données endoscopiques pour la paire $(G_{\rm AD},\wt{G}_{\rm AD})$. De l'homomor\-phisme
$$
Z(\hat{G}_{\rm SC})\hat{T}_{0,{\rm sc}}^{\hat{\theta}}/\hat{T}_{0,{\rm sc}}^{\hat{\theta}}
\rightarrow Z(\hat{G})\hat{T}_{0}^{\hat{\theta},\circ}/\hat{T}_{0}^{\hat{\theta},\circ}
$$
se déduit un homomorphisme
$$
{\rm H}^1(W_F; Z(\hat{G}_{\rm SC})\hat{T}_{0,{\rm sc}}^{\hat{\theta}}/\hat{T}_{0,{\rm sc}}^{\hat{\theta}})
\rightarrow {\rm H}^1(W_F;Z(\hat{G})\hat{T}_{0}^{\hat{\theta},\circ}/\hat{T}_{0}^{\hat{\theta},\circ}).
$$
On note $B\subset {\rm H}^1(W_F; Z(\hat{G}_{\rm SC})\hat{T}_{0,{\rm sc}}^{\hat{\theta}}/\hat{T}_{0,{\rm sc}}^{\hat{\theta}})$ 
l'image réciproque par cet homomorphisme du sous--groupe de ${\rm H}^1(W_F;Z(\hat{G})\hat{T}_{0}^{\hat{\theta},\circ}/\hat{T}_{0}^{\hat{\theta},\circ})$ formé des éléments non ramifiés, \cad de ${\rm H}^1(W_F^{\rm nr};Z(\hat{G})\hat{T}_{0}^{\hat{\theta},\circ}/\hat{T}_{0}^{\hat{\theta},\circ})$.
On note $B^{\rm nr}\subset B$ le sous--groupe formé des éléments non ramifiés. Ce n'est autre que ${\rm H}^1(W_F^{\rm nr};
Z(\hat{G}_{\rm SC})\hat{T}_{0,{\rm sc}}^{\hat{\theta}}/\hat{T}_{0,{\rm sc}}^{\hat{\theta}})$. On a:
\begin{enumerate}
\item[(3)]tout élément de $B$ se relève en un cocycle de $W_F$ à valeurs dans $Z(\hat{G}_{\rm SC})\hat{T}_{0,{\rm sc}}^{\hat{\theta}}$, et tout élément de $B^{\rm nr}$ se relève en un tel cocycle non ramifié.
\end{enumerate}
Montrons (3). De la suite exacte
$$
1\rightarrow \hat{T}_{0,{\rm sc}}^{\hat{\theta}}\rightarrow \hat{T}_{0,{\rm sc}}\rightarrow \hat{T}_{0,{\rm sc}}/\hat{T}_{0,{\rm sc}}^{\hat{\theta}}
\rightarrow 1
$$
est issu un homomorphisme
$$
{\rm H}^1(W_F; \hat{T}_{0,{\rm sc}})\rightarrow {\rm H}^1(W_F; \hat{T}_{0,{\rm sc}}/\hat{T}_{0,{\rm sc}}^{\hat{\theta}})
$$
qui est surjectif \cite[p.~719]{L}. Tout élément de $B$ se relève donc en un cocycle de $W_F$ à valeurs dans $\hat{T}_{0,{\rm sc}}$ et celui--ci est forcément à valeurs dans $Z(\hat{G}_{\rm SC})\hat{T}_{0,{\rm sc}}^{\hat{\theta}}$. Dans le cas d'un élément $\beta\in B^{\rm nr}$, il suffit de relever $\beta(\phi)$ en un élément de $Z(\hat{G}_{\rm SC})\hat{T}_{0,{\rm sc}}^{\hat{\theta}}$ pour obtenir un relèvement non ramifié de $\beta$.

Pour chaque $\beta\in B$, on fixe un cocycle de $W_F$ à valeurs dans $Z(\hat{G}_{\rm SC})\hat{T}_{0,{\rm sc}}^{\hat{\theta}}$ qui relève $\beta$ et qui est non ramifié si $\beta$ l'est, et l'on note encore $\beta$ ce cocycle. Soit
$$
W_F\rightarrow \ES{T}'_{\rm sc},\, w\mapsto (h_{\rm sc}(w),w)
$$
l'homomorphisme défini par $h_{\rm sc}(w)=1$ si $w\in I_F$ et $h_{\rm sc}(\phi)=h_{\rm sc}$. Pour $\beta\in B$, on note $\ES{T}'_{\rm sc}[\beta]$ le sous--groupe de ${^L(G_{\rm AD})}$ engendré par $\hat{T}_{\rm sc}^{\hat{\theta}}$ et par l'ensemble des $(\beta(w)h_{\rm sc}(w),w)$ pour $w\in W_F$. L'application $w\mapsto \hat{\theta}(\beta(w)) \beta(w)^{-1}$ est un cocycle de $W_F$ à valeurs dans $Z(\hat{G}_{\rm SC})$, et puisque $s_{\rm sc}\hat{\theta}(h_{\rm sc})= a'_{\rm sc}h_{\rm sc}\phi(s_{\rm sc})$, pour $(h',w)\in \ES{T}'_{\rm sc}[\beta]$, on a
$$
{\rm Int}_{\tilde{s}_{\rm sc}}(h',w)= (h' \hat{\theta}(\beta(w))\beta(w)^{-1}a'_{\rm sc}(w), w);\leqno{(4)}
$$
où l'on a noté $w\mapsto a'_{\rm sc}(w)$ le cocycle non ramifié de $W_F$ à valeurs dans $Z(\hat{G}_{\rm SC})$ tel que $a'_{\rm sc}(\phi)=a'_{\rm sc}$. Alors $\bs{T}'_{\!\rm ad}[\beta]=(T',\ES{T}'_{\rm sc}[\beta],\tilde{s}_{\rm sc})$ est encore une donnée endoscopique elliptique pour $(\wt{G}_{\rm AD},\omega'_{\rm ad}[\beta])$, pour un certain caractère $\omega'_{\rm ad}[\beta]$ de $G_{\rm AD}(F)$. D'après (4), $\omega'_{\rm ad}[\beta]$ est le produit de $\omega'_{\rm ad}$ et du caractère de $G_{\rm AD}(F)$ correspondant au cocycle $w\mapsto \hat{\theta}(\beta(w))\beta(w)^{-1}$ de $W_F$ à valeurs dans $Z(\hat{G}_{\rm SC})$. On voit que la données $\bs{T}'_{\!\rm ad}[\beta]$ est non ramifiée si et seulement si $\beta\in B^{\rm nr}$.

\begin{marema2}
{\rm 
Pour $\beta \in B^{\rm nr}$, l'élément $\zeta=\beta(\phi)$ appartient à $Z(\hat{G}_{\rm SC})\hat{T}_{0,{\rm sc}}^{\hat{\theta}}$. Choisissons une décomposition $\zeta= z_\zeta t_\zeta$ avec $z_\zeta\in Z(\hat{G}_{\rm SC})$ et $t_\zeta\in \hat{T}_{0,{\rm sc}}^{\hat{\theta}}$. La donnée $\bs{T}'_{\!\rm ad}[\beta]$ co\"{\i}ncide avec la donnée elliptique et non ramifiée $\bs{T}'_{\!{\rm ad},z_\zeta}=(T'_{\rm ad},\ES{T}'_{{\rm sc},z_\zeta},\tilde{s}_{\rm sc})$ pour $(G_{\rm AD}, \omega'_{{\rm ad},z_\zeta})$ introduite en \ref{comparaison de deux intégrales endoscopiques}. Ici $\ES{T}'_{{\rm sc}, z_\zeta}$ est le sous--groupe de ${^L(G_{\rm AD})}$ engendré par $\hat{T}^{\hat{\theta}}_{\rm sc}$, par $I_F$ et par $(z_\zeta h_{\rm sc},\phi)$, et $\omega'_{{\rm ad},z_\zeta}$ est le produit de $\omega'_{\rm ad}$ et du caractère (non ramifié) de $G_{\rm AD}(F)$ correspondant au cocycle non ramifié de $W_F$ à valeurs dans $Z(\hat{G}_{\rm SC})$ dont la valeur en $\phi$ est $\hat{\theta}(z_\zeta)z_\zeta^{-1}$ (puisque $\hat{\theta}(z_\zeta)z_\zeta^{-1}=\hat{\theta}(\zeta)\zeta^{-1}$, on a bien 
$\omega'_{{\rm ad},z_\zeta}= \omega'_{\rm ad}[\beta]$). \hfill $\blacksquare$
}
\end{marema2}

\begin{marema3}
{\rm 
On est parti d'une donnée $\bs{T}'_{\!\rm ad}$ construite à partir de $\bs{T}'$ via le choix d'un élément $(h,\phi)\in \ES{T}'$ et d'une décomposition $h=z_h \hat{\rho}(h_{\rm sc})$ avec $z_h\in Z(\hat{G})$ et $h_{\rm sc}\in \hat{G}_{\rm sc}$. On a aussi choisi un élément $s_{\rm sc}\in \hat{T}_{\rm sc}$ dont l'image dans $\hat{G}_{\rm AD}$ co\"{\i}ncide avec celle de $s$, mais ce choix n'a pas d'importance. Choisissons un autre élément $(\bar{h},\phi)\in \ES{T}'$, et une décomposition $\bar{h}= z_{\bar{h}} \hat{\rho}(\bar{h}_{\rm sc})$ avec $z_{\bar{h}}\in Z(\hat{G})$ et $\bar{h}_{\rm sc}\in \hat{G}_{\rm SC}$. Alors $\bar{h}= t h$ pour un $t\in \hat{T}_0^{\hat{\theta},\circ}$, et en écrivant $t= z_t \hat{\rho}(t_{\rm sc})$ avec $z_t\in Z(\hat{G})$ et $t_{\rm sc}\in \hat{T}_{0,{\rm sc}}^{\hat{\theta}}$, on obtient que l'élément $\zeta = \bar{h}_{\rm sc}h_{\rm sc}^{-1}$ appartient à $Z(\hat{G}_{\rm SC})\hat{T}_{0,{\rm sc}}^{\hat{\theta}}$. 
L'élément
$$
\bar{a}'_{\rm sc}= s_{\rm sc}\hat{\theta}(\bar{h}_{\rm sc})\phi(s_{\rm sc})^{-1}\bar{h}_{\rm sc}^{-1}
$$
vérifie
$$
\bar{a}'_{\rm sc}= \hat{\theta}(\zeta)\zeta^{-1}a'_{\rm sc}.
$$
Soit $\beta$ le cocycle non ramifié de $W_F$ à valeurs dans $Z(\hat{G}_{\rm SC}) \hat{T}_{0,{\rm sc}}^{\hat{\theta}}$ tel que $\beta(\phi)= \zeta$. En rempla\c{c}ant $(h_{\rm sc},\phi)$ par $(\bar{h}_{\rm sc},\phi)$ dans la définition de $\bs{T}'_{\!{\rm ad}}$, on obtient la donnée endoscopique elliptique et non ramifiée $\bs{T}'_{\!{\rm ad}}[\beta]= (T'_{\rm ad}, \ES{T}'_{\rm sc}[\beta],\tilde{s}_{\rm sc})$ pour $(\wt{G}_{\rm AD}, \omega'_{\rm ad}[\beta])$ définie plus haut. \hfill $\blacksquare$
}
\end{marema3}

Pour $\beta\in B$, les formules définissant le sous--groupe $\ES{T}'_{\rm sc}[\beta]\subset {^LG}$ l'identifient au groupe dual ${^L(T'_{\rm ad})}$. On n'a donc pas besoin de données auxiliaires. Rappelons que l'on a fixé en \ref{classes de conjugaison stable} un couple $(\delta_0,\gamma_0)\in \ES{D}(\bs{T}')$. Les éléments $\delta_{0,{\rm ad}}\in \wt{T}'_{\rm ad}(F)$ et $\gamma_{0,{\rm ad}}\in \wt{G}_{\rm AD}(F)$ se correspondent, \cad que l'on a $(\delta_{0,{\rm ad}},\gamma_{0,{\rm ad}})\in \ES{D}(\bs{T}'_{\!\rm ad})$. On normalise le facteur de transfert $\Delta[\beta]$ associé à la donnée $\bs{T}'_{\!\rm ad}[\beta]$ en imposant la condition
$$
\Delta[\beta](\delta_{0,{\rm ad}},\gamma_{0,{\rm ad}})= \Delta(\delta_0,\gamma_0).\leqno{(5)}
$$
Dans le cas où $\beta\in B^{\rm nr}$, on a aussi un facteur de transfert non ramifié pour $\bs{T}'_{\!\rm ad}[\beta]$, relatif à $\wt{K}'$ et $\wt{K}'_{\rm ad}$. Des $a$--data et $\chi$--data fixées en \ref{classes de conjugaison stable} se déduisent naturellement de tels objets pour la donnée elliptique non ramifiée $\bs{T}'_{\!\rm ad}[\beta]$. On a alors $\Delta_{\rm II}(\delta_{0,{\rm ad}}, \gamma_{0,{\rm ad}})= \Delta_{\rm II}(\delta_0,\gamma_0)$, et le lemme de \ref{calcul d'un facteur de transfert} montre que ce facteur de transfert non ramifié pour $\bs{T}'_{\!\rm ad}[\beta]$ co\"{\i}ncide avec $\Delta[\beta]$.

\subsection{Un lemme sur les facteurs de transfert}\label{un lemme sur les facteurs de transfert}
De même que l'on a associé l'ensemble $\ES{Y}$ à $\gamma_0$, on associe un ensemble $\ES{Y}_{\rm ad}$ à $\gamma_{0,{\rm ad}}$. 
De l'homomorphisme naturel $\rho:G\rightarrow G_{\rm AD}$ se déduit une application $\rho:\ES{Y}\rightarrow \ES{Y}_{\rm ad}$. Elle n'est pas surjective en général. 

\begin{monlem} 
\begin{enumerate}
\item[(i)]Pour $\beta\in B$ et $g\in \ES{Y}_{\rm c}$, on a
$$
\Delta[\beta](\delta_{0,{\rm ad}}, g_{\rm ad}^{-1}\gamma_{0,{\rm ad}} g_{\rm ad})= \Delta(\delta_0,g^{-1}\gamma_0 g).
$$
\item[(ii)]Pour $x\in \ES{Y}_{\rm ad}\smallsetminus \rho(\ES{Y}_{\rm c})$, on a
$$
\sum_{\beta\in B}\Delta[\beta](\delta_{0,{\rm ad}},x^{-1}\gamma_{0,{\rm ad}}x)=0.
$$
\end{enumerate}
\end{monlem}

\begin{proof}
D'apr\`es la normalisation des facteurs de transfert $\Delta[\beta]$, on peut remplacer l'\'egalit\'e du (i) par 
$$
\Delta(\delta_0,g^{-1}\gamma_0 g)\Delta(\delta_0,\gamma_0)^{-1}=\Delta[\beta](\delta_{0,{\rm ad}},g_{\rm ad}^{-1}\gamma_{0,{\rm ad}}g_{\rm ad})\Delta[\beta](\delta_{0,{\rm ad}},\gamma_{0,{\rm ad}})^{-1}$$
et celle du (ii) par
$$
\sum_{\beta\in B}\Delta[\beta](\delta_{0,{\rm ad}},x^{-1}\gamma_{0,{\rm ad}}x)\Delta[\beta](\delta_{0,{\rm ad}},\gamma_{0,{\rm ad}})^{-1}=0.$$
Tous ces termes sont calcul\'es par le lemme de \ref{calcul plus général}. Les tores et applications intervenant dans la formule de ce lemme varient selon le cas. Avec des notations qu'on esp\`ere compr\'ehensibles, le (i) de l'\'enonc\'e se traduit par une \'egalit\'e
$$
\langle q(g),(t_{0},s_{\rm ad})\rangle = \langle q_{\rm ad}(g_{\rm ad}),(t_{0,{\rm sc}}[\beta],s_{\rm ad})\rangle\leqno{(1)}$$
pour $\beta\in B$ et $g\in \ES{Y}_{\rm c}$, tandis que le (ii) se traduit par une \'egalit\'e
$$\sum_{\beta\in B}<q_{\rm ad}(x),(t_{0,{\rm sc}}[\beta],s_{\rm ad})>=0 \leqno{(2)}$$
pour $x\in \ES{Y}_{\rm ad}\smallsetminus \rho(\ES{Y}_{\rm c})$.

Commen\c{c}ons par prouver (2). On note simplement $t_{0,{\rm sc}}:W_{F}\to \hat{T}_{0,{\rm sc}}/\hat{T}_{0,{\rm sc}}^{\hat{\theta}}$ la cocha\^{\i}ne correspondant \`a la donn\'ee $\bs{T}'_{\!\rm ad}$, autrement dit \`a $\beta=1$. Par construction, pour $\beta\in B$ et $w\in W_F$, on a
$$
t_{0,{\rm sc}}[\beta](w)=t_{0,{\rm sc}}(w)\beta(w)^{-1}.
$$
L'\'egalit\'e \`a prouver se r\'ecrit
$$
\langle q_{\rm ad}(x),(t_{0,{\rm sc}},s_{\rm ad})\rangle 
\sum_{\beta\in B}\langle q_{\rm ad}(x),(\beta,1)\rangle^{-1}=0.
$$
Posons
$$\mathfrak{H}={\rm H}^{1,0}(\Gamma_{F}; T_{0,{\rm sc}}\xrightarrow{1-\theta}(1-\theta)(T_{0,{\rm ad}})),\quad 
\mathfrak{Q}=q_{\rm ad}\circ \rho(\ES{Y}_{\rm c})\subset \mathfrak{H}.
$$
Le sous--ensemble $\rho(\ES{Y}_{\rm c})\subset G_{\rm AD}$ est invariant par multiplication \`a gauche par $T_{0,{\rm ad}}^{\theta}$ et \`a droite par 
$\pi_{\rm ad}(G_{\rm SC}(F))$, où $\pi_{\rm ad}: G_{\rm SC}\rightarrow G_{\rm AD}$ est l'homomorphisme naturel. Il est donc \'egal \`a l'image réciproque de $\mathfrak{Q}$  par $q_{\rm ad}$. Il nous suffit donc de prouver l'\'egalit\'e
$$
\sum_{\beta\in B}\langle h,(\beta,1)\rangle =0\leqno{(3)}
$$
pour tout $h\in \mathfrak{H}\smallsetminus \mathfrak{Q}$. On rappelle que l'accouplement 
$$
{\rm H}^{1,0}(\Gamma_{F};T_{0,{\rm sc}}\xrightarrow{1-\theta}(1-\theta)(T_{0,{\rm ad}}))\times {\rm H}^{1,0}(W_{F};\hat{T}_{0,{\rm sc}}/\hat{T}_{0,{\rm sc}}^{\hat{\theta}}\xrightarrow{1-\hat{\theta}}\hat{T}_{0,{\rm ad}})
$$
devient une dualit\'e parfaite si l'on quotiente le deuxi\`eme groupe par l'image naturelle de $\hat{T}_{0,{\rm ad}}^{\Gamma_{F},\circ}$. Pour prouver l'\'egalit\'e (3), il suffit de prouver que l'annulateur $\mathfrak{Q}^{\vee}$ de $\mathfrak{Q}$ dans ${\rm H}^{1,0}(W_{F};\hat{T}_{0,{\rm sc}}/\hat{T}_{0,{\rm sc}}^{\hat{\theta}}\xrightarrow{1-\hat{\theta}}\hat{T}_{0,{\rm ad}})$ est le produit de l'image de $B$ par $\beta\mapsto (\beta,1)$ et de  l'image naturelle de $\hat{T}_{0,{\rm ad}}^{\Gamma_{F},\circ}$.

On a le diagramme
$$
\xymatrix{{\rm H}^{1,0}(\Gamma_{F};T_{0,{\rm sc}}\xrightarrow{1-\theta}(1-\theta)(T_{0}))\ar[r]^<(.2){\varphi} \ar[d]_{\psi}& 
{\rm H}^{1,0}(\Gamma_{F};T_{0,{\rm sc}}\to T_{0})\\ 
{\rm H}^{1,0}(\Gamma_{F};T_{0,{\rm sc}}\xrightarrow{1-\theta}(1-\theta)(T_{0,{\rm ad}}))
}.$$
Le groupe $\mathfrak{Q}$ est l'image par $\psi$ de l'image réciproque par $\varphi$ de 
${\rm H}^{1,0}(\mathfrak{o}_{F};T_{0,{\rm sc}}\to T_{0})$. Dualement, on a le diagramme
$$
\xymatrix{
& {\rm H}^{1,0}(W_{F};\hat{T}_{0,{\rm sc}}/\hat{T}_{0,{\rm sc}}^{\hat{\theta}}\xrightarrow{1-\hat{\theta}}\hat{T}_{0,{\rm ad}})
\ar[d]^<(.3){\hat{\psi}}\\
{\rm H}^{1,0}(W_{F};\hat{T}_{0}\rightarrow \hat{T}_{0,{\rm ad}})\ar[r]^<(.15){\hat{\varphi}} &
{\rm H}^{1,0}(W_{F};\hat{T}_{0}/\hat{T}_{0}^{\hat{\theta},\circ}\xrightarrow{1-\hat{\theta}}\hat{T}_{0,{\rm ad}})\\
}.
$$
On a d\'ej\`a dit que l'annulateur de ${\rm H}^{1,0}(\mathfrak{o}_{F};T_{0,{\rm sc}}\to T_{0})$ dans ${\rm H}^{1,0}(W_{F};\hat{T}_{0}\to \hat{T}_{0,{\rm ad}})$ est le groupe ${\rm H}^{1,0}(W_{F}^{\rm nr};\hat{T}_{0}\to \hat{T}_{0,{\rm ad}})$. Il en r\'esulte que $\mathfrak{Q}^{\vee}$ est l'image r\'eciproque par $\hat{\psi}$ du groupe engendr\'e par l'image par $\hat{\varphi}$ de ${\rm H}^{1,0}(W_{F}^{\rm nr};\hat{T}_{0}\to \hat{T}_{0,{\rm ad}})$ et par l'image naturelle de $\hat{T}_{0,{\rm ad}}^{\Gamma_{F},\circ}$ dans ${\rm H}^{1,0}(W_{F};\hat{T}_{0}/\hat{T}_{0}^{\hat{\theta},\circ}\xrightarrow{1-\hat{\theta}}\hat{T}_{0,{\rm ad}})$. Soit $(u_{\rm sc},t)$ un élément de $\mathfrak{Q}^{\vee}$, ou plutôt un cocycle dont la classe de cohomologie appartient à $\mathfrak{Q}^\vee$. Notons $u$ le compos\'e de $u_{\rm sc}$ et de l'homomorphisme $\hat{T}_{0,{\rm sc}}/\hat{T}_{0,{\rm sc}}^{\hat{\theta}}\to \hat{T}_{0}/\hat{T}_{0}^{\hat{\theta},\circ}$. Alors $(u,t)$ est cohomologue \`a un cocycle $(\underline{u},\underline{t})$, o\`u $\underline{u}$ est un cocycle non ramifi\'e de $W_F$ \`a valeurs dans $Z(\hat{G}) \hat{T}_{0}/\hat{T}_{0}^{\hat{\theta},\circ}$ et $\underline{t}\in \hat{T}_{0,{\rm ad}}^{\Gamma_{F},\circ}$. On peut donc fixer $v\in \hat{T}_{0}$ tel que $u(w)=\underline{u}(w)w(v)v^{-1}$ pour tout $w\in W_{F}$ et $t=\underline{t}(1-\theta)(v_{\rm ad})$. On \'ecrit $v=z_{v}\hat{\rho}(v_{\rm sc})$ avec $z_{v}\in Z(\hat{G})$ et $v_{\rm sc}\in \hat{T}_{0,{\rm sc}}$. On peut remplacer le cocycle $(u_{\rm sc},t)$ par le cocycle cohomologue $(u'_{\rm sc},t')$ d\'efini par $u'_{\rm sc}(w)=u_{\rm sc}(w)w(v_{\rm sc})^{-1}v_{\rm sc}$ pour tout $w\in W_F$ et $t'=t(1-\theta)(v_{\rm ad})^{-1}$. Notons $u'$ le composé de $u'_{\rm sc}$ et de l'homomorphisme $\hat{T}_{0,{\rm sc}}/\hat{T}_{0,{\rm sc}}^{\hat{\theta}}\to \hat{T}_{0}/\hat{T}_{0}^{\hat{\theta},\circ}$. On obtient $u'(w)=\underline{u}(w)w(z_{v})z_{v}^{-1}$ pour tout $w\in W_F$ et $t'=\underline{t}$. La premi\`ere \'egalit\'e implique que $u'_{\rm sc}$ est \`a valeurs dans $Z(\hat{G}_{\rm SC})\hat{T}_{0,{\rm sc}}^{\hat{\theta}}/\hat{T}_{0,{\rm sc}}^{\hat{\theta}}$ et que son image dans ${\rm H}^1(W_{F};Z(\hat{G})\hat{T}_{0}^{\hat{\theta},\circ}/\hat{T}_{0}^{\hat{\theta},\circ})$ est non ramifi\'ee.  Autrement dit $u'_{\rm sc}\in B$. L'image de $(u'_{\rm sc},t')$ dans ${\rm H}^{1,0}(W_{F};\hat{T}_{0,{\rm sc}}/\hat{T}_{0,{\rm sc}}^{\hat{\theta}}\xrightarrow{1-\hat{\theta}}\hat{T}_{0,{\rm ad}})$ est donc bien dans le produit de l'image de $B$ par $\beta\mapsto (\beta,1)$ et de  l'image naturelle de $\hat{T}_{0,{\rm ad}}^{\Gamma_{F},\circ}$. La r\'eciproque est imm\'ediate. Cela prouve (2). 
 
Prouvons maintenant (1). Le calcul que l'on vient de faire montre que le membre de droite de l'\'egalit\'e (1) 
ne d\'epend pas de $\beta$. On peut donc y remplacer $t_{0,{\rm sc}}[\beta]$ par $t_{0,{\rm sc}}$. 
Le terme $q_{\rm ad}(g_{\rm ad})$ \'etant l'image par $\psi$ de $q(g)$, on a par compatibilit\'e des produits l'\'egalit\'e
$$
\langle q_{\rm ad}(g_{\rm ad}),(t_{0,{\rm sc}},s_{\rm ad})\rangle 
= \langle q(g),\hat{\psi}(t_{0,{\rm sc}},s_{\rm ad})\rangle.
$$
Le terme $\hat{\psi}(t_{0,{\rm sc}},s_{\rm ad})$ est par construction \'egal au produit de $(t_{0},s_{\rm ad})$ et du cocycle $(\zeta,1)$, o\`u $\zeta$ est le cocycle non ramifi\'e de $W_F$ tel que $\zeta(\phi)=z_{h}$ (on se rappelle la d\'ecomposition $h=z_{h}\pi(h_{\rm sc})$.   Or $(\zeta,1)$ est l'image par $\hat{\varphi}$ de ce m\^eme couple vu comme un \'el\'ement de  ${\rm H}^{1,0}(W_{F}^{\rm nr};\hat{T}_{0}\to \hat{T}_{0,{\rm ad}})$. Son produit avec $q(g)$ vaut $1$ puisque l'hypoth\`ese $g\in \ES{Y}_{\rm c}$ entra\^{\i}ne que $\varphi\circ q(g)$ appartient \`a ${\rm H}^{1,0}(\mathfrak{o}_{F};T_{0,{\rm sc}}\to T_{0})$. Cela ach\`eve la preuve.
\end{proof}

\subsection{Une égalité d'intégrales}\label{une égalité d'intégrales}
Comme on l'a dit en \ref{classes de conjugaison stable}, l'application $g\mapsto g^{-1}\gamma_0 g$ identifie $T_0^\theta\backslash \ES{Y}$ à la classe de conjugaison stable de $\gamma_0$. Cette classe se décompose en un nombre fini de classes de conjugaison ordinaire (\cad par $G(F)$) et ces dernières ont été munies de mesures qui permettent de définir les intégrales orbitales. La bijection précédente identifie la réunion de ces mesures à une mesure sur $T_0^\theta\backslash \ES{Y}$, que l'on note $d\bar{g}$. En reprenant les définitions, on obtient l'égalité
$$
I^{\wt{G}}(\bs{T}'\!,\delta_0,\bs{1}_{\wt{K}})= d(\theta^*)^{1/2}D^{\wt{G}}(\gamma_0)^{1/2}\int_{T_0^\theta\backslash \ES{Y}}
\bs{1}_{\wt{K}}(g^{-1}\gamma_0 g) \Delta(\delta_0,g^{-1}\gamma_0 g) d\bar{g}.\leqno{(1)}
$$
On a:
\begin{enumerate}
\item[(2)]pour $g\in \ES{Y}$, la condition $g^{-1}\gamma_0 g\in \wt{K}$ équivaut à la réunion des deux conditions $g_{\rm ad}^{-1}\gamma_{0,{\rm ad}} g_{\rm ad}\in \wt{K}_{\rm ad}$ et $g\in \ES{Y}_{\rm c}$.
\end{enumerate}
Prouvons (2). Il est clair que, si $g^{-1}\gamma_0 g\in \wt{K}$, on a aussi $g_{\rm ad}^{-1}\gamma_{0,{\rm ad}} g_{\rm ad}\in \wt{K}_{\rm ad}$. Puisque $g\in \ES{Y}$, on a aussi $g\in \ES{Y}_{\rm c}$ d'après le lemme de \ref{classes de conjugaison stable}. Inversement, supposons 
$g_{\rm ad}^{-1}\gamma_{0,{\rm ad}} g_{\rm ad}\in \wt{K}_{\rm ad}$. Alors $g^{-1}\gamma_0 g \in Z(G;F)\wt{K}$, d'où $g^{-1}{\rm Int}_{\gamma_0}(g)\in Z(G;F)K$. Supposons de plus $g\in \ES{Y}_{\rm c}$. Comme on la vu dans la preuve du lemme de \ref{classes de conjugaison stable}, cela entraîne $\iota(g^{-1}\gamma_0 g)\in \iota(K)$. Or la restriction de $\iota$ à $Z(G;F)$ est de noyau fini (égal à $\pi(Z(G_{\rm SC};F))$). Donc l'ensemble des $z\in Z(G;F)$ tels que $\iota(z)\in \iota(K)$ est un sous--groupe compact de $Z(G;F)$. Ce sous--groupe est contenu dans le sous--groupe compact maximal $Z(G;F)\cap K$ de $Z(G;F)$ (et donc égal à $Z(G;F)\cap K$). Il en résulte que $g^{-1}{\rm Int}_{\gamma_0}(g)\in K$, et donc que $g^{-1}\gamma_0 g \in \wt{K}$.

Le sous--ensemble $T_0^\theta\backslash \ES{Y}_{\rm c}\subset T_0^\theta\backslash \ES{Y}$ est ouvert. On le munit de la restriction de la mesure $d\bar{g}$ sur $T_0^\theta\backslash \ES{Y}$. On déduit de (1) et (2) l'égalité
$$
I^{\wt{G}}(\bs{T}'\!,\delta_0,\bs{1}_{\wt{K}})= d(\theta^*)^{1/2}D^{\wt{G}}(\gamma_0)^{1/2}\int_{T_0^\theta\backslash \ES{Y}_{\rm c}}
\bs{1}_{\wt{K}_{\rm ad}}(g_{\rm ad}^{-1}\gamma_{0,{\rm ad}} g_{\rm ad}) \Delta(\delta_0,g^{-1}\gamma_0 g) d\bar{g}.\leqno{(3)}
$$
L'application $\rho: \ES{Y}\rightarrow\ES{Y}_{\rm ad}$ induit par restriction et passage aux quotients 
une application surjective $T_0^\theta\backslash \ES{Y}_{\rm c}\rightarrow T_{0,{\rm ad}}^\theta \backslash \rho(\ES{Y}_{\rm c})$. Le sous--ensemble $T_{0,{\rm ad}}^\theta \backslash \rho(\ES{Y}_{\rm c})\subset T_{0,{\rm ad}}^\theta \backslash \ES{Y}_{\rm ad}$ est ouvert. L'ensemble $T_{0,{\rm ad}}^\theta \backslash \ES{Y}_{\rm ad}$ est muni d'une mesure (similaire à celle sur $T_0^\theta\backslash \ES{Y}$), que l'on note $d\bar{x}$. On munit $T_{0,{\rm ad}}^\theta\backslash \rho(\ES{Y})$ de la restriction de cette mesure $d\bar{x}$.

Notons $\mathfrak{z}$ l'alg\`ebre de Lie de $Z(G)^0$. On a les suites exactes
 $$
 0\to \mathfrak{z}(F)\to \mathfrak{g}(F)\to \mathfrak{g}_{\rm ad}(F)\to 0 \leqno (4)
 $$
 et
 $$
 0\to \mathfrak{z}^{\theta}(F)\to \mathfrak{t}_{0}^{\theta}(F)\to \mathfrak{t}_{0,{\rm ad}}^{\theta}(F)\to 0. \leqno(5)
 $$
Les groupes $G(F)$, $G_{\rm AD}(F)$, $T_{0}^{\theta,\circ}(F)$ et $T_{0,{\rm ad}}^{\theta,\circ}(F)$ ont \'et\'e munis de mesures de Haar $dg$, $dx$, $dg_{\gamma_0}$ et $dx_{\gamma_{0,{\rm ad}}}$. Via l'exponentielle, il s'en d\'eduit des mesures sur les alg\`ebres de Lie correspondantes. On munit $\mathfrak{z}(F)$ et $\mathfrak{z}^{\theta}(F)$ des mesures compatibles avec les suites ci--dessus. 
Le groupe $(Z(G)/Z(G)^{\theta})^{\Gamma_{F}}$ (formé des points fixes de $Z(G;\overline{F})/Z(G;\overline{F})^\theta$ sous $\Gamma_F$) est une vari\'et\'e analytique et son espace tangent \`a l'origine est $\mathfrak{z}(F)/\mathfrak{z}^{\theta}(F)$. Des  mesures que l'on vient de fixer se d\'eduit une mesure sur cet espace tangent, que l'on rel\`eve en une mesure de Haar sur $(Z(G)/Z(G)^{\theta})^{\Gamma_{F}}$. On note $d\bar{z}$ cette mesure. Pour $z\in (Z(G)/Z(G)^{\theta})^{\Gamma_{F}}$, on a $(1-\theta)(z)\in Z(G;F)$. Notons $Z(G;F)^1$ le sous--groupe compact maximal de $Z(G;F)$ --- \cad l'intersection de $Z(G;F)$ avec $K$ ou encore avec $T_0(\mathfrak{o})$ --- et $\ES{Z}^{1}\subset (Z(G)/Z(G)^{\theta})^{\Gamma_{F}}$ le sous--groupe formé des $z$ tels que $(1-\theta)(z)\in Z(G;F)^1$. Alors $\ES{Z}^1$ 
est un sous--groupe ouvert de $(Z(G)/Z(G)^{\theta})^{\Gamma_{F}}$ que l'on munit de la restriction de la mesure $d\bar{z}$.

On a:
\begin{enumerate}
\item[(6)]les fibres de l'application  $T_{0}^{\theta}\backslash\ES{Y}_{\rm c}\to T_{0,{\rm ad}}^{\theta}\backslash \rho(\ES{Y}_{\rm c})$ sont isomorphes \`a $ \ES{Z}^1$; pour toute fonction $f$ int\'egrable sur $T_{0}^{\theta}\backslash\ES{Y}_{\rm c}$, on a l'\'egalit\'e
 $$
 \int_{T_{0}^{\theta}\backslash \ES{Y}_{\rm c}}f(g)d\bar{g}=
 \int_{T_{0,{\rm ad}}^{\theta}\backslash \rho(\ES{Y}_{\rm c})}\int_{\ES{Z}^1}f(z\dot{x})d\bar{z}d\bar{x}$$
 o\`u $\dot{x}$ est un rel\`evement quelconque de $x\in \rho(\ES{Y}_{\rm c})$ dans $\ES{Y}_{\rm c}$.
 \end{enumerate}
Prouvons (6). Soit $g\in T_0^{\theta}\backslash \ES{Y}_{\rm c}$. Un \'el\'ement $g'\in T_{0}^{\theta}\backslash G$ qui a m\^eme projection que $g$ dans $T_{0,{\rm ad}}^{\theta}\backslash G_{\rm AD}$ s'\'ecrit de fa\c{c}on unique $g'=z g$ avec $z\in Z(G)/Z(G)^{\theta}$. Le m\^eme calcul que dans la preuve de (2) montre qu'un tel \'el\'ement appartient encore \`a $\ES{Y}_{\rm c}$ si et seulement si $z$ appartient \`a $\ES{Z}^1$.  Cela d\'emontre la premi\`ere assertion. Soit $g\in T_0^{\theta}\backslash\ES{Y}_{\rm c}$, et posons $\gamma=g^{-1}\gamma_0 g$.  L'espace tangent \`a $T_0^{\theta}\backslash\ES{Y}_{\rm c}$  en $\gamma$ est  $\mathfrak{g}_{\gamma}(F)\backslash \mathfrak{g}(F)$, où $\mathfrak{g}_{\gamma}$ est l'algèbre de Lie de $G_{\gamma}$. Le r\'esultat pr\'ec\'edent montre qu'il est isomorphe \`a $(\mathfrak{z}^{\theta}(F)\backslash\mathfrak{z}(F))\times (\mathfrak{g}_{{\rm ad},\gamma_{\rm ad}}(F)\backslash \mathfrak{g}_{\rm ad}(F))$. Cet isomorphisme pr\'eserve les mesures pourvu que la suite (4) les pr\'eserve, ainsi que la suite exacte suivante
$$
 0\to \mathfrak{z}^{\theta}(F)\to \mathfrak{g}_{\gamma}(F)\to \mathfrak{g}_{{\rm ad},\gamma_{\rm ad}}(F)\to 0.
$$
Or la conjugaison par $g$ identifie cette dernière suite \`a (5) et cette identification transporte les mesures sur les deux derniers termes de chaque suite. Par d\'efinition de nos mesures, l'isomorphisme d'espaces tangents ci--dessus pr\'eserve donc les mesures et la deuxi\`eme assertion de (6) s'ensuit.
 
On compare facilement les termes $D^{\tilde{G}}(\gamma_0)$ et $D^{\tilde{G}_{\rm AD}}(\gamma_{0,{\rm ad}})$. La seule diff\'erence dans leurs d\'efinitions est que le premier contient la valeur absolue du d\'eterminant de $1-{\rm ad}_{\gamma_0}$ agissant sur $\mathfrak{t}_{0}(F)/\mathfrak{t}_{0}^{\theta}(F)$ tandis que le second  contient la valeur absolue du d\'eterminant de $1-{\rm ad}_{\gamma_{0,{\rm ad}}}$ agissant sur $\mathfrak{t}_{0,{\rm ad}}(F)/\mathfrak{t}_{0,{\rm ad}}^{\theta}(F)$. Le rapport des deux est donc la valeur absolue du d\'eterminant de $1-{\rm ad}_{\gamma_0}$ agissant sur $\mathfrak{z}(F)/\mathfrak{z}^{\theta}(F)$, et ceci vaut $d(\theta^*)d(\theta^*_{\rm ad})^{-1}$ où 
$d(\theta^*_{\rm ad})=d(\theta_{\ES{E}_{\rm ad}})$ est le facteur de normalisation pour $\wt{G}_{\rm AD}$ défini comme en \ref{l'énoncé}. 
Donc  
\begin{enumerate}
\item[(7)]$D^{\tilde{G}}(\gamma_0)=d(\theta^*)d(\theta^*_{\rm ad})^{-1}D^{\tilde{G}_{\rm AD}}(\gamma_{0,{\rm ad}})$.
\end{enumerate}
 
Soit $\beta\in B$. D'apr\`es le point (i) du lemme de \ref{un lemme sur les facteurs de transfert}, pour tout $g\in \ES{Y}_{\rm c}$, on a
$$\Delta(\delta_0,g^{-1}\gamma_0 g)=\Delta[\beta](\delta_{0,{\rm ad}},g_{\rm ad}^{-1}\gamma_{0,{\rm ad}}g_{\rm ad}).
$$
En utilisant (6) et (7), l'\'egalit\'e  (3) se r\'ecrit
\begin{eqnarray*}
\lefteqn{
I^{\tilde{G}}(\bs{T}'\!,\delta_0,{\bf 1}_{\tilde{K}})=d(\theta^*)d(\theta^*_{\rm ad})^{-1/2}{\rm vol}(\ES{Z}^1\!,d\bar{z})\cdots}\\
&& \cdots D^{\tilde{G}_{\rm AD}}(\gamma_{0,{\rm ad}})^{1/2}\int_{T_{0,{\rm ad}}^{\theta}\backslash\rho(\ES{Y}_{\rm c})}{\bf 1}_{\tilde{K}_{\rm ad}}(x^{-1}\gamma_{0,{\rm ad}}x)\Delta[\beta](\delta_0,x^{-1}\gamma_{0,{\rm ad}}x)d\bar{x}.
\end{eqnarray*}
On peut sommer sur les $\beta\in B$ \`a condition de diviser par $\vert B\vert $. Mais on peut alors remplacer l'int\'egration sur $T_{0,{\rm ad}}^{\theta}\backslash \rho(\ES{Y}_{\rm c})$ par une int\'egration sur $T_{0,{\rm ad}}^{\theta}\backslash \ES{Y}_{\rm ad}$ tout entier:  la somme des int\'egrales qu'on ajoute est nulle d'apr\`es le point (ii) du lemme de \ref{un lemme sur les facteurs de transfert}. On obtient 
\begin{eqnarray*}
\lefteqn{I^{\tilde{G}}(\bs{T}'\!,\delta_0,{\bf 1}_{\tilde{K}})=d(\theta^*)d(\theta^*_{\rm ad})^{-1/2}{\rm vol}(\ES{Z}^1\!,d\bar{z})\cdots }  \\
&& \cdots \vert B\vert ^{-1}\sum_{\beta\in B}D^{\tilde{G}_{\rm AD}}(\gamma_{0,{\rm ad}})^{1/2}
\int_{T_{0,{\rm ad}}^{\theta}\backslash\ES{Y}_{\rm ad}}{\bf 1}_{\tilde{K}_{\rm ad}}(x^{-1}\gamma_{0,{\rm ad}}x)\Delta[\beta](\delta_0,x^{-1}\gamma_{0,{\rm ad}}x)\,dx.
  \end{eqnarray*}
 Cela se r\'ecrit
$$
I^{\tilde{G}}(\bs{T}'\!,\delta_0,{\bf 1}_{\tilde{K}})=d(\theta^*)d(\theta^*_{\rm ad})^{-1}{\rm vol}(\ES{Z}^1\!,d\bar{z})\vert B\vert ^{-1}\sum_{\beta\in B}I^{\tilde{G}_{\rm AD}}(\bs{T}'_{\rm ad}[\beta],\delta_{0,{\rm ad}},{\bf 1}_{\tilde{K}_{\rm ad}}). \leqno(8)
$$

\subsection{Calcul d'un volume}\label{calcul d'un volume}Rappelons que l'on a noté $\ES{Z}_1$ le sous--groupe ouvert de $(Z(G)/Z(G)^\theta)^{\Gamma_F}$ formé des $z$ tels que $(1-\theta)(z)\in K$, et qu'on a muni ce sous--groupe de la restriction de la mesure $d\bar{z}$ sur $(Z(G)/Z(G)^\theta)^{\Gamma_F}$ --- cf. \ref{une égalité d'intégrales}. 

\begin{monlem}
On a l'égalité
$$
{\rm vol}(\ES{Z}_1,d\bar{z})= d(\theta^*)^{-1}d(\theta^*_{\rm ad})\vert B\vert \vert B^{\rm nr}\vert^{-1}.
$$
\end{monlem}

\begin{proof}
Munissons $T_0(F)$ et $T_{0,{\rm ad}}(F)$ des mesures de Haar donnant le volume $1$ à $T_0(\mathfrak{o})$ et 
$T_{0,{\rm ad}}(\mathfrak{o})$. Il s'en déduit des mesures sur $\mathfrak{t}_0(F)$ et $\mathfrak{t}_{0,{\rm ad}}(F)$. Montrons que
\begin{enumerate}
\item[(1)] la suite exacte $0\rightarrow \mathfrak{z}(F)\rightarrow \mathfrak{t}_0(F) \rightarrow \mathfrak{t}_{0,{\rm ad}}(F) \rightarrow 0$
préserve ces mesures.
\end{enumerate}
Par définition, la suite exacte (4) de \ref{une égalité d'intégrales} préserve les mesures. Il en résulte que la suite (exacte)
$$
1\rightarrow Z(G;F)\rightarrow G(F) \xrightarrow{\rho} G_{\rm AD}(F)
$$
préserve les mesures. On en déduit l'égalité
$$
{\rm vol}(K,dg)= {\rm vol}(Z(G;K),dz){\rm vol}(\rho(K),dx),
$$
où la mesure de Haar $dz$ sur $Z(G;F)$ est celle déduite de la mesure sur $\mathfrak{z}(F)$ via l'exponentielle. Or ${\rm vol}(K,dg)=1$ et
$$
{\rm vol}(\rho(K),dx)= [K_{\rm ad}: \rho(K)]^{-1}{\rm vol}(K_{\rm ad},dx)= [K_{\rm ad}: \rho(K)]^{-1}.
$$ Que la suite exacte 
$0\rightarrow \mathfrak{z}(F)\rightarrow \mathfrak{t}_0(F) \rightarrow \mathfrak{t}_{0,{\rm ad}}(F) \rightarrow 0$ préserve les mesures équivaut à ce que la suite (exacte)
$$
1\rightarrow Z(G;F)\rightarrow T_0(F) \rightarrow T_{0,{\rm ad}}(F)
$$
les préserve. Or $Z(G;F)\cap K = Z(G;F)\cap T_0(\mathfrak{o})$. Pour démontrer (1), il suffit donc de prouver l'égalité
$$
[K_{\rm ad}:\rho(K)]= [T_{0,{\rm ad}}(\mathfrak{o}): \rho(T_0(\mathfrak{o}))].\leqno{(2)}
$$
On a une application naturelle
$$
T_{0,{\rm ad}}(\mathfrak{o})/\rho(T_0(\mathfrak{o}))\rightarrow K_{\rm ad}/\rho(K).\leqno{(3)}
$$
Elle est clairement injective. D'après \ref{classes de conjugaison stable}.(2), on a l'égalité
$$
K_{\rm ad}\pi_{\rm ad}(G_{\rm SC}(F))= T_{0,{\rm ad}}(\mathfrak{o})\pi_{\rm ad}(G_{\rm SC}(F)),
$$
et a fortiori l'inclusion $K_{\rm ad}\subset T_{0,{\rm ad}}(\mathfrak{o})\pi_{\rm ad}(G_{\rm SC}(F))$. Mais si on a trois éléments $k'\in K_{\rm ad}$, $t'\in T_{0,{\rm ad}}(\mathfrak{o})$ et $g\in G(F)$ tels que $k'=t'g_{\rm ad}$, on a $g_{\rm ad}\in K_{\rm ad}$ donc $g\in Z(G;F)K$ et l'on peut remplacer $g$ par un élément de $K$. Donc $K_{\rm ad}= T_{0,{\rm ad}}(\mathfrak{o})\rho(K)$. La suite (3) est donc aussi surjective et l'égalité (2) s'ensuit. Cela prouve (1).

Puisque la suite exacte (5) de \ref{une égalité d'intégrales} préserve les mesures, on déduit de (1) que
\begin{enumerate}
\item[(4)]la suite exacte $0\rightarrow \mathfrak{z}(F)/\mathfrak{z}^\theta(F)\rightarrow \mathfrak{t}_0(F)/\mathfrak{t}_0^\theta(F)
\rightarrow \mathfrak{t}_{0,{\rm ad}}(F)/\mathfrak{t}_{0,{\rm ad}}^\theta(F) \rightarrow 0$ préserve les mesures.
\end{enumerate}
Posons $\overline{\mathfrak{t}}_0=(1-\theta)(\mathfrak{t}_0)$. On transporte la mesure sur $\mathfrak{t}_0(F)/\mathfrak{t}_0^\theta(F)$ en une mesure sur $\overline{\mathfrak{t}}_0(F)$ par l'isomorphisme $1-\theta$ entre ces deux espaces. Notons $\overline{T}_{\!0}$ le $F$--tore $(1-\theta)(T_0)$. On en déduit une mesure de Haar $d\bar{g}_0$ sur $\overline{T}_{\!0}(F)$ qui par restriction donne une mesure 
sur $\overline{T}_{\!0}(\mathfrak{o})$. De même, on note $\overline{T}_{\!0,{\rm ad}}$ le $F$--tore $(1-\theta)(T_{0,{\rm ad}})$, et 
on construit une mesure de Haar $d\bar{x}_0$ sur $\overline{T}_{\!0,{\rm ad}}(F)$ qui par restriction donne une mesure sur $\overline{T}_{\!0,{\rm ad}}(\mathfrak{o})$. D'après la définition de $\ES{Z}^1$, l'application $z\mapsto (1-\theta)(z)$ envoie $\ES{Z}^1$ dans $\overline{T}_{\!0}(\mathfrak{o})$, et cette application se complète en une suite exacte
$$
1\rightarrow \ES{Z}^1 \xrightarrow{1-\theta} \overline{T}_{\!0}(\mathfrak{o})\xrightarrow{\rho} \overline{T}_{\!0,{\rm ad}}(\mathfrak{o}).
$$
D'après (4), les constructions entraînent que cette suite préserve les mesures. Comme dans la preuve de (1), on en déduit l'égalité
$$
{\rm vol}(\ES{Z}^1,d\bar{z})={\rm vol}(\overline{T}_{\!0}(\mathfrak{o}),d\bar{g}_0){\rm vol}(\overline{T}_{\!0,{\rm ad}}(\mathfrak{o}),d\bar{x}_0)^{-1}
[\overline{T}_{\!0,{\rm ad}}(\mathfrak{o}): \bar{\rho}(\overline{T}_{\!0,{\rm ad}}(\mathfrak{o}))],\leqno{(5)}
$$
où $\bar{\rho}: \overline{T}_{\!0}\rightarrow \overline{T}_{\!0,{\rm ad}}$ est l'homomorphisme naturel (déduit de $\rho$ par restriction).

On vient de munir l'espace $\bar{\mathfrak{t}}_0(F)$ d'une mesure. Pour plus de précision, notons--la $\mu$. On peut aussi munir l'espace 
$\bar{\mathfrak{t}}_0(F)$ de la mesure $\mu'$ tel que l'isomorphisme naturel
$$
\bar{\mathfrak{t}}_0(F)\rightarrow \mathfrak{t}_0^\theta(F)\backslash \mathfrak{t}_0(F)
$$
préserve les mesures. Il résulte de la définition de $\mu$ que l'automorphisme $1-\theta$ de $\bar{\mathfrak{t}}_0(F)$ envoie $\mu'$ sur $\mu$. Donc
$$
\mu'= \vert \det(1-\theta; \bar{\mathfrak{t}}_0(F)\vert_F\mu.
$$
Munissons $\overline{T}_{\!0}(F)$ de la mesure de Haar $d\bar{g}'_0$ déduite de $\mu'$. Alors, par définition, la suite (exacte)
$$
1\rightarrow \overline{T}_{\!0}(F)\rightarrow T_0(F) \xrightarrow{\xi_0} T'(F)
$$
préserve les mesures. Le théorème de Lang implique que cette suite se restreint en une suite exacte
$$
1\rightarrow \overline{T}_{\!0}(\mathfrak{o})\rightarrow T_0(\mathfrak{o})\xrightarrow{\xi_0} T'(\mathfrak{o})\rightarrow 1.
$$
Les deux derniers groupes ayant pour volume $1$, on obtient ${\rm vol}(\overline{T}_{\!0}(\mathfrak{o}),d\bar{g}'_0)=1$, et donc aussi
$$
{\rm vol}(\overline{T}_{\!0}(\mathfrak{o}),d\bar{g}_0)= \vert \det(1-\theta; \bar{\mathfrak{t}}_0(F)\vert_F^{-1}.
$$
De la même manière, on a
$$
{\rm vol}(\overline{T}_{\!0,{\rm ad}}(\mathfrak{o}),d\bar{x}_0)= \vert \det(1-\theta; \bar{\mathfrak{t}}_{0,{\rm ad}}(F)\vert_F^{-1}.
$$
Comme dans la preuve de \ref{une égalité d'intégrales}.(7), on obtient
$$
{\rm vol}(\overline{T}_{\!0}(\mathfrak{o}),d\bar{g}_0){\rm vol}(\overline{T}_{\!0,{\rm ad}}(\mathfrak{o}),d\bar{x}_0)^{-1}
= d(\theta^*)^{-1}d(\theta^*_{\rm ad}).
$$

Considérons le diagramme commutatif
$$
\xymatrix{
1 \ar[d] & 1\ar[d] & 1 \ar[d] \\
{\rm H}^1(W_F^{\rm nr};Z(\hat{G}_{\rm SC})\hat{T}_{0,{\rm sc}}^{\hat{\theta}}/\hat{T}_{0,{\rm sc}}^{\hat{\theta}}) \ar[d] \ar[r] 
& {\rm H}^1(W_F^{\rm nr};\hat{T}_{0,{\rm sc}}/\hat{T}_{0,{\rm sc}}^{\hat{\theta}}) \ar[d] \ar[r] 
& {\rm H}^1(W_F^{\rm nr};\hat{T}_0/\hat{T}_0^{\hat{\theta},\circ}) \ar[d]_{c} \\
{\rm H}^1(W_F;Z(\hat{G}_{\rm SC})\hat{T}_{0,{\rm sc}}^{\hat{\theta}}/\hat{T}_{0,{\rm sc}}^{\hat{\theta}})\ar[d]_{e_1} \ar[r]^<(.2){d_1} 
& {\rm H}^1(W_F;\hat{T}_{0,{\rm sc}}/\hat{T}_{0,{\rm sc}}^{\hat{\theta}}) \ar[d]_{e_2}\ar[r]^{d_2} 
& {\rm H}^1(W_F;\hat{T}_0/\hat{T}_0^{\hat{\theta},\circ}) \ar[d]_{e_3} \\
{\rm H}^1(I_F;Z(\hat{G}_{\rm SC})\hat{T}_{0,{\rm sc}}^{\hat{\theta}}/\hat{T}_{0,{\rm sc}}^{\hat{\theta}}) \ar[r]_<(.2){f_1} 
& {\rm H}^1(I_F;\hat{T}_{0,{\rm sc}}/\hat{T}_{0,{\rm sc}}^{\hat{\theta}}) \ar[r]_{f_2} 
& {\rm H}^1(I_F;\hat{T}_0/\hat{T}_0^{\hat{\theta},\circ})}.
$$
Les suites verticales sont exactes, les suites horizontales ne le sont pas. Les groupes centraux des deux dernières suites verticales sont respectivement les duaux de $\overline{T}_{\!0,{\rm ad}}(F)$ et $\overline{T}_{\!0}(F)$. Les premiers groupes des deux dernières suites verticales sont respectivement les annulateurs de $\overline{T}_{\!0,{\rm ad}}(\mathfrak{o})$ et $\overline{T}_{\!0}(\mathfrak{o})$. Donc les duaux de $\overline{T}_{\!0,{\rm ad}}(\mathfrak{o})$ et $\overline{T}_{\!0}(\mathfrak{o})$ sont respectivement les images ${\rm Im}(e_2)$ et ${\rm Im}(e_3)$. L'homomorphisme $f_2$ est dual de l'homomorphisme $\overline{T}_{\!0}(\mathfrak{o})\xrightarrow{\rho} \overline{T}_{\!0,{\rm ad}}(\mathfrak{o})$. Il en résulte que
$$
[\overline{T}_{\!0,{\rm ad}}(\mathfrak{o}): \rho(\overline{T}_{\!0}(\mathfrak{o}))]= \vert \ker(f_2)\cap {\rm Im}(e_2)\vert.
$$
Le groupe $B$ est par définition un sous--groupe du groupe central de la première suite verticale. Montrons que:
\begin{enumerate}
\item[(6)]on a l'égalité $f_1\circ e_1 (B)= \ker(f_2)\cap {\rm Im}(e_2)$.
\end{enumerate}
On a l'inclusion ${\rm Im}(f_1\circ e_1)\subset {\rm Im}(e_2)$ par commutativité du diagramme. Par définition, les éléments de $B$ s'envoient sur des éléments non ramifiés de ${\rm H}^1(W_F; Z(\hat{G})\hat{T}_0^{\hat{\theta},\circ}/\hat{T}_0^{\hat{\theta},\circ})$. A fortiori, l'image de $B$ par $d_2\circ d_1$ est contenue dans ${\rm Im}(c)$. Donc $e_3\circ d_2\circ d_1 (B)=0$ et, par commutativité du diagramme, $f_1\circ e_1(B)$ est contenu dans $\ker(f_2)$. Inversement, on remarque que $I_F$ agit trivialement sur les groupes complexes $Z(\hat{G}_{\rm SC})\hat{T}_{0,{\rm sc}}^{\hat{\theta}}/\hat{T}_{0,{\rm sc}}^{\hat{\theta}}$, $\hat{T}_{0,{\rm sc}}/\hat{T}_{0,{\rm sc}}^{\hat{\theta}}$ et $\hat{T}_0/\hat{T}_0^{\hat{\theta},\circ}$. En notant $\hat{X}$ l'un de ces trois groupes, le groupe ${\rm H}^1(I_F;\hat{X})$ n'est autre que le groupe des homomorphismes continus de $I_F$ dans $\hat{X}$. Il en résulte que $f_1$ est injectif et que $\ker(f_2)$ est le groupe des homomorphismes continus de $I_F$ dans $\ker(\hat{T}_{0,{\rm sc}}/\hat{T}_{0,{\rm sc}}^{\hat{\theta}}\rightarrow \hat{T}_0/\hat{T}_0^{\hat{\theta},\circ})$. Ce noyau est contenu dans $Z(\hat{G}_{\rm SC})\hat{T}_{0,{\rm sc}}^{\hat{\theta}}/\hat{T}_{0,{\rm sc}}^{\hat{\theta}}$. Soit $u$ un élément de $\ker(f_2)\cap {\rm Im}(e_2)$. Le groupe $u(I_F)$ est contenu dans 
$Z(\hat{G}_{\rm SC})\hat{T}_{0,{\rm sc}}^{\hat{\theta}}/\hat{T}_{0,{\rm sc}}^{\hat{\theta}}$. Choisissons un cocycle $v$ de 
$W_F$ à valeurs dans $\hat{T}_{0,{\rm sc}}/\hat{T}_{0,{\rm sc}}^{\hat{\theta}}$ tel que $e_2([v])= u$, où $[v]\in {\rm H}^1(W_F;\hat{T}_{0,{\rm sc}}/\hat{T}_{0,{\rm sc}}^{\hat{\theta}})$ désigne la classe de cohomologie de $v$. Introduisons le cocycle non ramifié $v^{\rm nr}_\phi$ de $W_F^{\rm nr}$ à valeurs dans $\hat{T}_{0,{\rm sc}}/\hat{T}_{0,{\rm sc}}^{\hat{\theta}}$ tel que $v^{\rm nr}_\phi(\phi)= v(\phi)$. On a encore $e_2([(v^{\rm nr}_\phi)^{-1}v])=u$. Quitte à remplacer $v$ par $(v^{\rm nr}_\phi)^{-1}v$, on peut donc supposer $v(\phi)=1$. Mais alors $v$ prend ses valeurs dans $Z(\hat{G}_{\rm SC})\hat{T}_{0,{\rm sc}}^{\hat{\theta}}/\hat{T}_{0,{\rm sc}}^{\hat{\theta}}$ et définit un élément $\beta\in {\rm H}^1(W_F; Z(\hat{G}_{\rm SC})\hat{T}_{0,{\rm sc}}^{\hat{\theta}}/\hat{T}_{0,{\rm sc}}^{\hat{\theta}})$. L'image $\beta'$ de $\beta$ dans ${\rm H}^1(W_F; Z(\hat{G})\hat{T}_0^{\hat{\theta},\circ}/\hat{T}_0^{\hat{\theta},\circ})$ a pour restriction à $I_F$ l'image de $u$ par $f_2$, \cad $0$. Donc $\beta'$ est non ramifié, et $\beta$ appartient à $B$. On a $u=f_1\circ e_1(\beta)$, ce qui démontre l'inclusion $\ker(f_2)\cap {\rm Im}(e_2)\subset f_1\circ e_1(B)$. Cela prouve (6).

On a déjà dit que $f_1$ était injectif. Puisque, par définition, $B^{\rm nr}$ est le noyau de $e_1$ restreint à $B$, on déduit de (6) l'égalité
$$
\vert \ker(f_2)\cap {\rm Im}(e_2)\vert =[B:B^{\rm nr}].
$$
En mettant ces calculs bout--à--bout, l'égalité (5) devient celle de l'énoncé.
\end{proof}

\subsection{Le résultat}\label{le résultat}Revenons à l'égalité \ref{une égalité d'intégrales}.(8). Le lemme de \ref{calcul d'un volume} remplace la constante $d(\theta^*)d(\theta^*_{\rm ad})^{-1}{\rm vol}(\ES{Z}^1\!,d\bar{z})\vert B\vert ^{-1}$ dans cette égalité par $\vert B^{\rm nr}\vert^{-1}$. Pour $\beta\in B\smallsetminus B^{\rm nr}$, la donnée $\bs{T}'_{\!{\rm ad}}[\beta]$ est ramifiée. D'après un argument de Kottwitz (cf. la proposition de \cite[VII, 2.1]{MW}), le transfert à cette donnée de la fonction $\bs{1}_{\wt{K}_{\rm ad}}$ est nul. Donc
$$
I^{\wt{G}}(\bs{T}'_{\!{\rm ad}}[\beta],\delta_{0,{\rm ad}},\bs{1}_{\wt{K}_{\rm ad}})=0.
$$
Pour $\beta\in B^{\rm nr}$ (rappelons que $\beta$ est un cocycle non ramifié de $W_F$ à valeurs dans $Z(\hat{G}_{\rm SC})\hat{T}_{0,{\rm sc}}^{\hat{\theta}})$), la donnée $\bs{T}'_{\!{\rm ad}}[\beta]$ est de la forme $\bs{T}'_{\!{\rm ad},z}$ où $\beta(\phi)=zt$ avec $z\in Z(\hat{G}_{\rm SC})$ et $t\in \hat{T}_{0,{\rm sc}}^{\hat{\theta}}$ --- cf. la remarque 2 de \ref{données endoscopiques pour G_AD}. Le lemme de \ref{comparaison de deux intégrales endoscopiques} entraîne l'égalité 
$$
I^{\wt{G}_{\rm AD}}(\bs{T}'_{\!{\rm ad}}[\beta],\delta_{0,{\rm ad}},\bs{1}_{\wt{K}_{\rm ad}})=
I^{\wt{G}_{\rm AD}}(\bs{T}'_{\!{\rm ad}},\delta_{0,},\bs{1}_{\wt{K}_{\rm ad}}).
$$
L'égalité \ref{une égalité d'intégrales}.(8) devient
$$
I^{\wt{G}}(\bs{T}'\!,\delta_0,\bs{1}_{\wt{K}})=\vert B^{\rm nr}\vert^{-1}\sum_{\beta\in B^{\rm nr}}I^{\wt{G}_{\rm AD}}(\bs{T}'_{\!{\rm ad}}[\beta],
\delta_{0,{\rm ad}},\bs{1}_{\wt{K}_{\rm ad}})=I^{\wt{G}_{\rm AD}}(\bs{T}'_{\!{\rm ad}},
\delta_{0,{\rm ad}},\bs{1}_{\wt{K}_{\rm ad}}).\leqno{(1)}
$$
Cette égalité (1) est vraie pour tout élément $\delta_0\in \wt{K}'$ qui est fortement $\wt{G}$--régulier.

\vskip1mm
Rappelons que l'on est parti d'une donnée $\bs{T}'\in \mathfrak{E}_{\rm t-nr}(\wt{G},\omega)$, à laquelle on a associé en \ref{données endoscopiques pour G_AD} une donnée endoscopique elliptique et non ramifiée $\bs{T}'_{\!{\rm ad}}$ pour $(\wt{G}_{\rm AD},\omega'_{\rm ad})$.

\begin{mapropo}
\begin{enumerate}
\item[(i)]Supposons le lemme fondamental vérifiée pour la donnée $\bs{T}'_{\!{\rm ad}}$ 
et la fonction $\bs{1}_{\wt{K}_{\rm ad}}$. Alors il est vérifié pour la donnée $\bs{T}'$ et la fonction $\bs{1}_{\wt{K}}$. 
\item[(ii)]Supposons l'homomorphisme $T'(\mathfrak{o})\rightarrow T'_{\rm ad}(\mathfrak{o})$ surjectif. Supposons le lemme fondamental vérifié pour la donnée $\bs{T}'$ et la fonction $\bs{1}_{\wt{K}}$. Alors il est vérifié pour la donnée $\bs{T}'_{\!{\rm ad}}$ et la fonction $\bs{1}_{\wt{K}_{\rm ad}}$.
\end{enumerate}
\end{mapropo}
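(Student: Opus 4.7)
The plan is to combine two ingredients already at our disposal: on the one hand, the identity (1) of \ref{le r�sultat},
$$I^{\wt{G}}(\bs{T}'\!, \delta_0, \bs{1}_{\wt{K}}) = I^{\wt{G}_{\rm AD}}(\bs{T}'_{\!{\rm ad}}, \delta_{0,{\rm ad}}, \bs{1}_{\wt{K}_{\rm ad}}),$$
valid for every strongly $\wt{G}$-regular $\delta_0\in\wt{K}'$; on the other hand, the vanishing lemma of \ref{r�alisation du tore T_0}, which yields $I^{\wt{G}}(\bs{T}'\!, \delta, \bs{1}_{\wt{K}}) = 0$ for any strongly $\wt{G}$-regular $\delta\in\wt{T}'(F)\smallsetminus\wt{K}'$. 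That vanishing lemma applies verbatim to the triple $(\wt{G}_{\rm AD}, \omega'_{\rm ad}, \bs{T}'_{\!{\rm ad}})$ and the hyperspecial subspace $\wt{K}_{\rm ad}$, since by \ref{donn�es endoscopiques pour G_AD} all the hypotheses of \ref{les hypoth�ses} are preserved under passage to the adjoint group.

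For (i), let $\delta\in\wt{T}'(F)$ be strongly $\wt{G}$-regular. The case $\delta\notin\wt{K}'$ is immediate: both sides of the fundamental lemma vanish. If $\delta\in\wt{K}'$, set $\delta_0:=\delta$; by the surjectivity $\tilde\xi_0(\wt{T}_0(\mathfrak{o})) = \wt{K}'$ obtained in \ref{r�alisation du tore T_0} one picks $\gamma_0\in\wt{T}_0(\mathfrak{o})$ with $\tilde\xi_0(\gamma_0)=\delta_0$, and $\gamma_0$ is strongly regular because every element of $\wt{G}(F)$ representing the stable class of $\delta_0$ is, by hypothesis, strongly regular. The inclusion $\rho(\wt{K}')\subset\wt{K}'_{\rm ad}$ of \ref{donn�es endoscopiques pour G_AD} ensures $\delta_{\rm ad}\in\wt{K}'_{\rm ad}$, so the assumed fundamental lemma for $\bs{T}'_{\!{\rm ad}}$ gives $I^{\wt{G}_{\rm AD}}(\bs{T}'_{\!{\rm ad}}, \delta_{\rm ad}, \bs{1}_{\wt{K}_{\rm ad}}) = 1$, and (1) of \ref{le r�sultat} transports this to $I^{\wt{G}}(\bs{T}'\!, \delta, \bs{1}_{\wt{K}}) = 1 = \bs{1}_{\wt{K}'}(\delta)$.

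For (ii), assume $T'(\mathfrak{o})\twoheadrightarrow T'_{\rm ad}(\mathfrak{o})$, which implies also $\wt{K}'\twoheadrightarrow\wt{K}'_{\rm ad}$. Let $\delta'\in\wt{T}'_{\!{\rm ad}}(F)$ be strongly $\wt{G}_{\rm AD}$-regular. If $\delta'\notin\wt{K}'_{\rm ad}$, the vanishing lemma applied to $(\wt{G}_{\rm AD},\bs{T}'_{\!{\rm ad}})$ forces both members to be zero. If $\delta'\in\wt{K}'_{\rm ad}$, use the hypothesis to lift $\delta'$ to some $\delta_0\in\wt{K}'$; one checks that $\delta_0$ is itself strongly $\wt{G}$-regular, which follows from strong $\wt{G}_{\rm AD}$-regularity of $\delta'$ since the centralizer condition on a semisimple element of $\wt{G}$ is insensitive to the centre. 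Then (1) of \ref{le r�sultat} combined with the fundamental lemma for $\bs{T}'$ yields
$$I^{\wt{G}_{\rm AD}}(\bs{T}'_{\!{\rm ad}}, \delta', \bs{1}_{\wt{K}_{\rm ad}}) = I^{\wt{G}}(\bs{T}'\!, \delta_0, \bs{1}_{\wt{K}}) = \bs{1}_{\wt{K}'}(\delta_0) = 1 = \bs{1}_{\wt{K}'_{\rm ad}}(\delta').$$

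The only real subtlety lies in the lifting step in (ii): the hypothesis on integer points is precisely what is needed to produce $\delta_0\in\wt{K}'$ above $\delta'$, and one must verify that strong regularity passes up through this lift. Beyond that, the argument is a direct assembly of the identity of \ref{le r�sultat} and the vanishing lemma of \ref{r�alisation du tore T_0}, and I do not anticipate any further obstacle.
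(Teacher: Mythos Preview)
Your proposal is correct and follows essentially the same approach as the paper: split into the cases $\delta\in\wt{K}'$ versus $\delta\notin\wt{K}'$ (respectively $\delta'\in\wt{K}'_{\rm ad}$ versus $\delta'\notin\wt{K}'_{\rm ad}$), invoke the vanishing lemma of \ref{r�alisation du tore T_0} in the second case, and in the first case apply the identity (1) of \ref{le r�sultat} together with the hypothesis, using in (ii) the surjectivity $\wt{K}'\twoheadrightarrow\wt{K}'_{\rm ad}$ to lift $\delta'$ to a strongly $\wt{G}$-regular $\delta_0\in\wt{K}'$. The extra details you spell out (choosing $\gamma_0$ via $\tilde{\xi}_0$, and the passage of strong regularity through the lift) are exactly the implicit verifications the paper takes for granted.
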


\begin{proof}Pour (i), on doit vérifier l'égalité $\bs{1}_{\wt{K}'}(\delta) =I^{\wt{G}}(\bs{T}'\!,\delta,\bs{1}_{\wt{K}})$ pour tout élément $\delta\in \wt{T}'(F)$ qui est fortement $\wt{G}$--régulier. Si $\delta\notin \wt{K}'$, c'est vrai d'après le lemme de \ref{réalisation du tore T_0}. Si $\delta\in \wt{K}'$, c'est vrai d'après l'égalité (1) et l'hypothèse de l'énoncé.

La preuve de (ii) est similaire. On doit vérifier l'égalité $\bs{1}_{\wt{K}'_{\rm ad}}(\underline{\delta}) =I^{\wt{G}_{\rm AD}}(\bs{T}'_{\!{\rm ad}},\underline{\delta},\bs{1}_{\wt{K}_{\rm ad}})$ pour tout élément $\underline{\delta}\in \wt{T}'_{\!{\rm ad}}(F)$ qui est fortement $\wt{G}_{\rm AD}$--régulier. De nouveau, si $\underline{\delta}\notin \wt{K}'_{\rm ad}$, c'est vrai d'après le lemme de \ref{réalisation du tore T_0}. Si $\underline{\delta}\in \wt{K}'_{\rm ad}$, l'hypothèse de surjectivité implique que l'application $\wt{K}'\rightarrow \wt{K}'_{\rm ad}$ est surjective. On peut donc choisir un élément $\delta\in \wt{K}'$ tel que $\delta_{\rm ad}= \underline{\delta}$. Cet élément est fortement $\wt{G}$--régulier, et on conclut en utilisant (1) comme pour (i). 
\end{proof}

\begin{marema1}
{\rm Rappelons que l'on a posé $\overline{T}_{\!0}= (1-\theta)(T_0)$. En vertu de la suite exacte
$$
1\rightarrow Z(G)/(Z(G)\cap \overline{T}_{\!0})\rightarrow T' \rightarrow T'_{\rm ad}\rightarrow 1
$$
et du théorème de Lang, la surjectivité de l'homomorphisme $T'(\mathfrak{o})\rightarrow T'_{\rm ad}(\mathfrak{o})$ est vérifiée si le groupe 
$Z(G)/(Z(G)\cap \overline{T}_{\!0})$ est connexe. Il suffit pour cela que $Z(G)$ soit connexe.\hfill $\blacksquare$
}
\end{marema1}

\begin{marema2}
{\rm 
Inversement, soit $\underline{\bs{T}}'=(\underline{T}',\underline{\ES{T}}', \tilde{s}_{\rm sc})$ une donnée elliptique et non ramifiée pour $(\wt{G}_{\rm AD},\underline{\omega})$ telle que le groupe sous--jacent $\underline{T}'$ est un tore; où $\underline{\omega}$ est un caractère non ramifiée de $G_{\rm AD}(F)$. On suppose que $\tilde{s}_{\rm sc}=s_{\rm sc}\hat{\theta}$ avec $s_{\rm sc}\in \hat{T}_{\rm sc}$. Notons $\omega$ le caractère de $G(F)$ obtenu en composant $\underline{\omega}$ avec l'homomorphisme naturel $G(F)\rightarrow G_{\rm AD}(F)$. Il est trivial sur $Z(G;F)$ donc a fortiori sur $Z(G;F)^\theta$. Choisissons un élément $(h_{\rm sc},\phi)\in \underline{\ES{T}}'$, et posons $\tilde{s}= \hat{\rho}(s_{\rm sc})\hat{\theta}\in \hat{T}\hat{\theta}$ et $h= \hat{\rho}(h_{\rm sc})$; où (rappel) $\hat{\rho}: \hat{G}_{\rm SC}\rightarrow \hat{G}$ est l'homomorphisme naturel. Soit $\ES{T}'$ le sous--groupe de ${^LG}=\hat{G}\rtimes W_F$ engendré par $\hat{T}^{\hat{\theta},\circ}$, par $I_F$, et par $(h,\phi)$. C'est une extension scindée de $W_F$ par $\hat{T}^{\hat{\theta},\circ}$, qui définit un cocycle de $W_F$ à valeurs dans $W^\theta=W^{\hat{\theta}}$, où $W=W^G(T)$ est le groupe de Weyl de $G$. Notons $T_0$ le $F$--tore $T$ muni de l'action galoisienne tordue par ce cocycle. Alors $\bs{T}'=(T',\ES{T}',\tilde{s})$ est une donnée endoscopique elliptique et non ramifiée pour $(\wt{G},\omega)$, et $\underline{T}'$ co\"{\i}ncide avec la donnée $\bs{T}'_{\!{\rm ad}}$ pour $(\wt{G}_{\rm AD}, \underline{\omega})$ associée à $\bs{T}'$ comme en \ref{données endoscopiques pour G_AD} (via les choix de $h=\hat{\rho}(h_{\rm sc})$ et $s_{\rm sc}$); en particulier on a $\underline{\omega}= \omega'_{\rm ad}$.\hfill $\blacksquare$
}
\end{marema2}

\section{Réduction au cas du changement de base pour $PGL(n)$}\label{réduction au cas du CB pour PGL(n)}

\subsection{Les hypothèses (suite)}\label{les hypothèses (suite)}
On continue avec les hypothèses et notations de \ref{les hypothèses}. D'après le point (i) de la proposition de \ref{le résultat}, on est ramené à prouver le théorème de \ref{l'énoncé} dans le cas où $G$ est adjoint. Dans toute cette section \ref{réduction au cas du CB pour PGL(n)}, on suppose que $G=G_{\rm AD}$. 

On a fixé $\ES{E}$, $K= K_{\ES{E}}$ et $\wt{K}=K\gamma =\gamma K$. Puisque $G$ est adjoint, le groupe $K$ est uniquement déterminé à conjugaison près par $G(F)$. Cela entra\^{\i}ne que l'espace tordu 
$\wt{K}$ est uniquement déterminé à conjugaison près par $G(F)$. En effet, pour toute paire 
de Borel épinglée $\ES{E}'$ de $G$ définie sur $F$, le sous--ensemble $Z(\wt{G},\ES{E}')\subset \wt{G}$ n'est pas vide, il est défini sur $F$, et c'est un espace principal homogène sous $Z(G)=\{1\}$. Par conséquent $Z(\wt{G},\ES{E}')=\{\epsilon_{\ES{E}'}\}$ pour un élément 
$\epsilon_{\ES{E}'}\in \wt{G}(F)$, et $K_{\ES{E}'}\epsilon_{\ES{E}'}= \epsilon_{\ES{E}'}K_{\ES{E}'}$  
est l'unique sous--espace hyperspécial de $\wt{G}(F)$ de groupe sous--jacent $K_{\ES{E}'}$. On pose $\epsilon = \epsilon_{\ES{E}}$ et 
$\theta= {\rm Int}_{\smash{\wt{G}}}(\epsilon)$. On a les identifications
$$
\wt{G}=G\theta,\quad \wt{K}=K\theta.
$$ 

Puisque $G$ est adjoint, pour $\bs{T}'= (T',\ES{T}',\tilde{s})\in \mathfrak{E}_{\rm t-nr}$, la torsion centrale définissant l'espace tordu 
$\wt{T}'$ est triviale: on a 
$\wt{T}'=T'\times Z(\wt{G},\ES{E})$, l'élément $\epsilon'= (1,\epsilon)$ est dans $\wt{T}'(F)$, et $\theta'={\rm Int}_{\epsilon'}$ est l'identité de $T'$. L'espace $\wt{K}$ définit un sous--espace tordu $\wt{K}'$ de $\wt{T}'(F)$ de groupe sous--jacent $K'=T(\mathfrak{o})$, qui est donné par $\wt{K}'= K'\epsilon'$. On a les identifications
$$
\wt{T}'=T'\theta',\quad \wt{K}'=K'\theta'.
$$
Le choix d'un élément $(h,\phi)\in \ES{T}'$ permet comme en \ref{les hypothèses}.(1) d'identifier $\ES{T}'$ à ${^LT'}$. 
Puisque $\hat{G}= \hat{G}_{\rm SC}$, on a $h=h_{\rm sc}$ (\cad que l'on peut prendre $z_h=1$ dans la décomposition $h=z_h\hat{\rho}(h_{\rm sc})$). Pour $(\delta,\gamma)\in \ES{D}(\bs{T}')$, la formule de \cite[I, 6.3]{MW} --- cf. \ref{calcul d'un facteur de transfert}.(1) --- 
pour le facteur de transfert $\Delta(\delta,\gamma)$ se simplifie en
$$
\Delta(\delta,\gamma)= \Delta_{\rm II}(\delta,\gamma) \langle (V_{T_0},\nu_{\rm ad}), (t_{T_0,{\rm sc}},s_{\rm ad})\rangle^{-1}.\leqno{(1)}
$$
Ici $T_0$ est le commutant de $G_\gamma$ dans $G$ (on a $G_\gamma= T_0^{\theta_\gamma,\circ}$). Les caractères affines non ramifiés $\tilde{\lambda}_\zeta:\wt{T}'(F)\rightarrow {\Bbb C}^\times$ et $\tilde{\lambda}_z: \wt{G}(F)\rightarrow {\Bbb C}^\times$ valent $1$ sur $\wt{K}'$ et $\wt{K}$, et comme par construction ils prolongent les caractères triviaux de $T'(F)$ et $G(F)$ (à cause de notre identification de ${^LT'}$ avec $\ES{T}'$, et du fait que l'on a pris $z_h=1$), on a $\tilde{\lambda}_\zeta=1$ et $\tilde{\lambda}_z=1$. Bien sûr dans cette formule (1), le cocycle non ramifié $t_{T_0,{\rm sc}}$ de $W_F$ à valeurs dans $\hat{T}_{0,{\rm sc}}=\hat{T}_0$ est défini à l'aide de l'élément $h_{\rm sc}=h$. 

On sait que l'existence d'une donnée $\bs{T}'\in \mathfrak{E}_{\rm t-nr}$ implique que le groupe $\hat{G}_{\rm AD}$ est d'un type bien particulier, décrit en \cite[5.2]{LMW}. On reprend ici la construction de loc.~cit., mais du côté $G=G_{\rm AD}$, et en procédant dans le sens inverse, \cad en partant du cas général, et en se ramenant par étapes successives au cas du changement de base pour $PGL(n)$. Le résultat de loc.~cit. ne sera utilisé que dans la dernière étape (\ref{quatrième réduction}).  

Notons $\bs{\Delta}$ le diagramme de Dynkin de $G$. L'action de $\Gamma_F$ sur $G(\overline{F})$ induit une action sur $\bs{\Delta}$, qui se factorise en une action de ${\rm Gal}(F^{\rm nr}/F)$ et est complètement déterminée par l'action de $\phi$. De même, l'action de $\theta$ sur $G(\overline{F})$ induit une action sur $\bs{\Delta}$, qui commute à celle de $\phi$. On note $\Omega$ le sous--groupe de ${\rm Aut}(\bs{\Delta})$ engendré par $\phi$ et $\theta$, et on s'intéresse à l'action de $\Omega$ sur l'ensemble des composantes connexes de $\bs{\Delta}$.

\subsection{Première réduction}\label{première réduction}
Soient $\bs{\Delta}_1,\ldots, \bs{\Delta}_d$ les $\Omega$--orbites dans l'ensemble des composantes connexes 
de $\bs{\Delta}$. \`A la décomposition
$$
\bs{\Delta} = \bs{\Delta}_1\coprod \ldots \coprod \bs{\Delta}_d
$$
correspond une décomposition
$$
G=G_1\times \cdots \times G_d,
$$
qui est définie sur $F$ et $\theta$--stable. Pour $i=1,\ldots ,d$, on note $\theta_i\in {\rm Aut}_F(G_i)$ la restriction de $\theta$ à $G_i$. La paire de Borel épinglée $\ES{E}$ de $G$ se décompose en $\ES{E}=\ES{E}_1\times \cdots \times \ES{E}_d$, 
où $\ES{E}_i$ est une paire de Borel épinglée de $G_i$ définie sur $F$. On a
$$
K=K_1\times \cdots \times K_d,\quad K_i = \ES{K}_i(\mathfrak{o}),
$$
où $\ES{K}_i$ est le $\mathfrak{o}$--schéma en groupes lisse de fibre générique $G_i$ associé à $\ES{E}_i$ par la théorie de Bruhat--Tits. Pour $i=1,\ldots ,d$, on pose $\wt{G}_i= \wt{G}/(\prod_{j\neq i}G_i)$. Alors on a la décomposition
$$
\wt{G}= \wt{G}_1\times \cdots \times \wt{G}_d.
$$
On a donc
$$
Z(\wt{G},\ES{E})= Z(\wt{G}_1,\ES{E}_1)\times \cdots \times Z(\wt{G}_d,\ES{E}_d).
$$
Pour $i=1,\ldots ,d$, on pose $\theta_i = {\rm Int}_{\smash{\wt{G}_i}}(\epsilon_{\ES{E}_i})$. On a les identifications
$$
\wt{G}_i=G_i\theta_i,\quad \wt{K}_i= K_i\theta_i.
$$
Notons $\omega_i$ la restriction de $\omega$ au sous--groupe $G_i(F)$ de $G(F)$. Alors on a la décomposition
$$
\mathfrak{E}_{\rm t-nr}(\wt{G},\omega)= \mathfrak{E}_{\rm t-nr}(\wt{G}_1,\omega_1)\times \cdots \times \mathfrak{E}_{\rm t-nr}(\wt{G}_d,\omega_d).
$$
Comme les facteurs de transfert et les intégrales orbitales (ainsi que tous les autres termes) dans l'égalité (1) de \ref{l'énoncé} sont compatibles aux produits, on peut supposer $d=1$. 

\subsection{Deuxième réduction}\label{deuxième réduction}
On suppose dans ce numéro que le groupe $\Omega$ opère transitivement sur les composantes connexes de $\bs{\Delta}$. Soient $\bs{\Delta}_1,\ldots ,\bs{\Delta}_q$ les $\theta$--orbites dans l'ensemble de ces composantes connexes. Puisque $\phi$ et $\theta$ commutent, ces ensembles $\bs{\Delta}_i$ sont permutés transitivement par $\phi$, et on peut supposer que
$$
\phi(\bs{\Delta}_{i+1})= \bs{\Delta}_i,\quad i=1,\ldots ,q-1.
$$
On a donc $\phi(\bs{\Delta}_1)= \bs{\Delta}_q$, et $\phi^q(\bs{\Delta}_1)= \bs{\Delta}_1$.
\`A la décomposition
$$
\bs{\Delta}=\bs{\Delta}_1\coprod \ldots \coprod \bs{\Delta}_q
$$
correspond une décomposition
$$
G= G_1\times \cdots \times G_q,
$$
qui est définie sur le sous--corps $F_1$ de $F^{\rm nr}$ formé des éléments fixés par $\phi_1=\phi^q$ (\cad la sous--extension de degré $q$ de $F^{\rm nr}/F$). Pour $i=1,\ldots ,q-1$, on identifie $G_{i+1}$ à $G_1$ via $\phi^i$. Pour 
$(x_1,\ldots ,x_q)\in G_1(\overline{F})^{\times q}$, on a
$$
\sigma(x_1,\ldots ,x_q)= (\sigma(x_1),\ldots ,\sigma(x_q)),\quad \sigma\in \Gamma_{F_1}={\rm Gal}(\overline{F}/F_1),
$$
et
$$
\phi^j(x_1,\ldots ,q_q)= (x_{1+j},\ldots ,x_q, \phi_1(x_1),\ldots ,\phi_1(x_j)),\quad j=1,\ldots ,q-1.
$$
Autrement dit, on a
$$
G={\rm Res}_{F_1/F}(G_1),
$$
où ${\rm Res}_{F_1/F}$ désigne le foncteur \og restriction à la Weil \fg. Puisque $\theta$ est défini sur $F$ et commute à $\phi$, avec l'identification $G=G_1^{\times q}$ ci--dessus, on a $\theta= \theta_1^{\otimes q}$ pour un $F_1$--automorphisme $\theta_1$ de $G_1$. Posons $\wt{G}_1= G_1\theta_1$. C'est un $G_1$--espace tordu défini sur $F_1$, et on a
$$
\wt{G}={\rm Res}_{F_1/F}(\wt{G}_1).
$$
Le plongement diagonal $\iota:G_1\rightarrow G=G_1^{\times q}$ est défini sur $F_1$, et il se prolonge en un 
mor\-phisme d'espaces tordus
$$
\tilde{\iota}: \wt{G}_1\rightarrow \wt{G},\, g_1\theta_1\mapsto \iota(g_1)\theta,
$$
qui est lui aussi défini sur $F_1$. On a
$$
\iota(G_1(F_1))=G(F),\quad \tilde{\iota}(\wt{G}_1(F_1))= \wt{G}(F).
$$
En d'autres termes, $\iota$ induit un isomorphisme de groupes topologiques
$$
\iota_{F_1}: G_1(F_1)\buildrel\simeq\over{\longrightarrow} G(F),
$$
et $\tilde{\iota}$ induit un isomorphisme d'espaces topologiques tordus
$$
\tilde{\iota}_{F_1}:\wt{G}_1(F_1)\buildrel\simeq\over{\longrightarrow}\wt{G}(F).
$$

On a donc le

\begin{monlem1}
Deux éléments $\gamma_1,\, \bar{\gamma}_1\in \wt{G}_1(F_1)$ sont conjugués dans $G_1(F_1)$ si et seulement si les éléments $\tilde{\iota}(\gamma_1)$ et $\tilde{\iota}(\bar{\gamma}_1)$ de $\wt{G}(F)$ sont conjugués dans $G(F)$. 
\end{monlem1}

\'Ecrivons
$$
B=B_1\times \cdots \times B_q,\quad T=T_1\times \cdots \times T_q.
$$
On a $B={\rm Res}_{F_1/F}(B_1)$ et $T={\rm Res}_{F_1/F}(T_1)$, et 
$(B_1,T_1)$ est une paire de Borel de $G_1$ définie sur $F_1$. La paire de Borel épinglée $\ES{E}$ de $G$ se décompose en $\ES{E}= \ES{E}_1\times \cdots \times \ES{E}_q$ où $\ES{E}_i$ est une paire Borel épinglée de $G_i$ de paire de Borel sous--jacente $(B_i,T_i)$. La paire $\ES{E}_1$ est définie sur $F_1$, et avec l'identification $G=G_1^{\times q}$ ci--dessus, on a $\ES{E}_q= \ES{E}_{q-1}=\cdots = \ES{E}_1$. Posons $\mathfrak{o}_1= \mathfrak{o}_{F_1}$, et soit $\ES{K}_1$ le $\mathfrak{o}_1$--schéma en 
groupes lisse de fibre générique $G_1$ associé à $\ES{E}_1$ par la théorie de Bruhat--Tits. Le groupe 
$K_1= \ES{K}_1(\mathfrak{o}_1)$ est un sous--groupe hyperspécial de $G_1(F_1)$. Par construction, on a 
$\ES{K}= {\rm Res}_{\smash{\mathfrak{o}_1/\mathfrak{o}}}(\ES{K}_1)$, d'où $\iota(K_1)=K$. De plus, puisque 
$Z(\wt{G}_1,\ES{E}_1)=\{\theta_1\}$, le groupe $K_1$ est $\theta_1$--stable, et posant $\wt{K}_1=K_1\theta_1$, on a $\tilde{\iota}(\wt{K}_1)= \wt{K}$. 

Le groupe $G$ est quasi--déployé sur $F$ et déployé sur une extension non ramifiée de $F$. Le groupe $G_1$ est quasi--déployé sur $F_1$ et déployé sur une extension non ramifiée de $F_1$. Le $L$--groupe ${^LG}= \hat{G}\rtimes W_F$ s'obtient à partir du $L$--groupe ${^LG_1}= G_1\rtimes W_{F_1}$ en munissant $\hat{G}= \hat{G}_1^{\times q}$ de l'action galoisienne $\sigma \mapsto \sigma_G$ ($\sigma\in \Gamma_F$) donnée par:
\begin{itemize}
\item $\sigma_G = \sigma_{G_1}\otimes \cdots \otimes \sigma_{G_1}$ si $\sigma\in \Gamma_{F_1}$;
\item $\phi_G(x_1,\ldots ,x_q)= (x_2,\ldots ,x_q,(\phi_1)_{G_1}(x_1))$. 
\end{itemize}
Notons que puisque le groupe d'inertie $I_F=I_{F_1}$ opère trivialement sur $\hat{G}$ (resp. sur $\hat{G}_1$), la première égalité est impliquée par le seconde.

\begin{marema}
{\rm L'action galoisienne ci--dessus s'obtient en identifiant la $i$--ème composante $\hat{G}_1$ de 
$\hat{G}= \hat{G}_1^{\times q}$ ($i=1,\ldots ,q$) au groupe dual de la $(q-i+1)$--composante $G_1$ de $G=G_1^{\times q}$. Si on l'identifie 
à la $i$--ème composante $G_1$ de $G= G_1^{\times q}$, on obtient 
$$\phi_{G}(x_1,\ldots ,x_q)= ((\phi_1)_{G_1}(x_q), x_2,\ldots ,x_{q-1}).$$ 
On passe d'une action galoisienne à l'autre via l'automorphisme $(x_1,\ldots ,x_q)\mapsto (x_q,\ldots ,x_1)$
de $\hat{G}$.\hfill $\blacksquare$
}
\end{marema}

Notons $\omega_1$ le caractère $\omega\circ \iota$ de $G_1(F_1)$. Soit $\bs{T}'_{\!1}= (T'_1,\ES{T}'_1,\tilde{s}_1)$ un élément de $ \mathfrak{E}_{\rm t-nr}(\wt{G}_1,\omega_1)$. On reprend les hypothèses habituelles (cf. \cite[2.6]{LMW}): l'action $(\phi_1)_{G_1}$ de $\phi_1=\phi^q$ sur $\hat{G}_1$ stabilise une paire de Borel épinglée $\hat{\ES{E}}_1=(\hat{B}_1,\hat{T}_1, \{\hat{E}_{\alpha}\}_{\alpha \in \hat{\Delta}_1})$ de $\hat{G}_1$, et on note $\hat{\theta}_1$ l'automorphisme de $\hat{G}_1$ qui stabilise $\hat{\ES{E}}_1$ et commute à l'action galoisienne $\sigma \mapsto \sigma_{G_1}$ ($\sigma \in \Gamma_{F_1}$). L'automorphisme ${\rm Int}_{\tilde{s}_1}$ de $\hat{G}_1$ stabilise la paire de Borel $(\hat{B}_1,\hat{T}_1)$. On a donc $\tilde{s}_1= s_1\hat{\theta}_1$ pour un élément $s_1\in \hat{T}_1$. Choisissons un élément $(h_1,\phi_1)\in \ES{T}'_1$. Il définit un isomorphisme $\ES{T}'_1\buildrel \simeq\over{\longrightarrow}{^LT'_1}$. Posons $\bs{h}_1=h_1\phi_1\in \hat{G}_1W_{F_1}$. On a l'égalité dans $\hat{G}_1W_{F_1}\hat{\theta}_1= \hat{G}_1\hat{\theta}_1W_{F_1}$
$$
\tilde{s}_1\bs{h}_1 = a_1(\phi_1)\bs{h}_1\tilde{s}_1,
$$
où $a_1\in Z(\hat{G}_1)$ définit la classe de cohomologie non ramifiée $\bs{a}_1\in {\rm H}^1(W_{F_1},Z(\hat{G}_1))$ correspondant à $\omega_1$.  
Posons $\hat{\ES{E}}= \hat{\ES{E}}_1\times \cdots \times \hat{\ES{E}}_1$. C'est une paire de Borel épinglée de $\hat{G}$, qui est stable sous l'action $\phi_G$ de $\phi$ sur $\hat{G}$ et aussi sous l'action de l'automorphisme $\hat{\theta}= \hat{\theta}_1^{\otimes q}$. Soit
$$
\tilde{s}= (\tilde{s}_1,\ldots ,\tilde{s}_1)\in \hat{G}\hat{\theta}= (\hat{G}_1\hat{\theta}_1)^{\times q}.
$$
L'automorphisme ${\rm Int}_{\tilde{s}}$ de $\hat{G}$ stabilise la paire de Borel $(\hat{B},\hat{T})$ sous--jacente à $\hat{\ES{E}}$, et on a $\tilde{s}= s\hat{\theta}$ avec $s=(s_1,\ldots ,s_1)\in \hat{T}_1^{\times q}$. Soient
$$h=(1,\ldots ,1,h_1)\in \hat{G},\quad 
a(\phi)=(1,\ldots ,1,a_1(\phi_1))\in Z(\hat{G}).
$$ L'élément $a(\phi)$ définit la classe de cohomologie $\bs{a}\in {\rm H}^1(W_F,Z(\hat{G}))$ correspondant à $\omega$. 
Posons $\bs{h}=h\phi\in \hat{G} W_F$. Alors $\bs{h}^q= (h_1,\ldots ,h_1)\phi_1$, et on a l'égalité dans 
$\hat{G}W_F\hat{\theta}= \hat{G}\hat{\theta}W_F$
$$
\tilde{s}\bs{h}= a(\phi)\bs{h}\tilde{s}.
$$
Notons $\ES{T}'$ le sous--groupe fermé de ${^LG}$ engendré par $\hat{G}_{\tilde{s}}= (\hat{G}_1)_{\tilde{s}_1}^{\times q}$, par $I_F$ et par $(h,\phi)$. Pour $(x_1,\ldots ,x_q)\in \hat{G}_{\tilde{s}}$, on a
$$
(h,\phi)(x_1,\ldots ,x_q)(h,\phi)^{-1}= (x_2x_1^{-1},x_3x_2^{-1},\ldots , x_qx_{q-1}^{-1}, h_1\phi_1(x_1)h_1^{-1}x_q^{-1}).
$$
Puisque $\ES{T}'_1$ est une extension scindée de $W_F$ par $(\hat{G}_1)_{\tilde{s}_1}$, on en déduit que 
$\ES{T}'$ une extension scindée de $W_F$ par $\hat{G}_{\tilde{s}}$. Notons que $\ES{T}'$ ne dépend pas du choix du Frobenius $\phi$, ni du choix de l'élément 
$(h_1,\phi^d)\in \ES{T}'_1$. 

Posons $T'={\rm Res}_{F_1/F}(T'_1)$. C'est un tore défini sur $F$ et déployé sur $F^{\rm nr}$. Par construction, 
le triplet $\bs{T}'=(T',\ES{T}',\tilde{s})$ est une donnée endoscopique elliptique et non ramifiée pour $(\wt{G},\omega)$. De plus,  
l'élément $(h,\phi)\in \ES{T}'$ définit un isomorphisme $\ES{T}'\buildrel\simeq \over{\longrightarrow} {^LT'}$. 

\begin{monlem2}
La classe d'isomorphisme de la donnée $\bs{T}'$ ne dépend que de celle de la donnée $\bs{T}'_{\!1}$, et l'application
$$
\mathfrak{E}_{\rm t-nr}(\wt{G}_1,\omega_1)\rightarrow \mathfrak{E}_{\rm t-nr}(\wt{G},\omega),\, \bs{T}'_{\!1}\mapsto \bs{T}'
$$
ainsi définie est bijective. 
\end{monlem2}

\begin{proof}Soient $\bs{T}'_{\!1}= (T'_1,\ES{T}'_1,\tilde{s}'_1)$ et $\bs{T}''_{\!1}= (T''_1,\ES{T}''_1,\tilde{s}''_1)$ des données endoscopiques elliptiques et non ramifiées pour $(\wt{G}_1,\omega_1)$, telles que les groupes $T'_1$ et $T''_1$ sont des tores. Soient 
$\bs{T}'=(T',\ES{T}',\tilde{s}')$ et $\bs{T}''=(T'',\ES{T}'',\tilde{s}'')$ les données endoscopiques elliptiques et non ramifiées pour $(\wt{G},\omega)$ associées à 
$\bs{T}'_{\!}$ et $\bs{T}''_{\!}$ par la construction ci--dessus. 

Supposons tout d'abord que les données $\bs{T}'_{\!1}$ et $\bs{T}''_{\!1}$ sont isomorphes: il existe des éléments $x_1\in \hat{G}_1$ et 
$z_1\in Z(\hat{G}_1)$ tels que
$$
x_1\ES{T}'_1x_1^{-1}= \ES{T}''_1,\quad x_1\tilde{s}'_1x_1^{-1} = z_1\tilde{s}''_1.
$$
Soient $x= (x_1,\ldots ,x_1)\in \hat{G}$ et $z= (z_1,\ldots ,z_1)\in Z(\hat{G})$. Alors on a
$$
x\ES{T}'x^{-1}= \ES{T}'',\quad x\tilde{s}'x^{-1}= \tilde{s}'',
$$
et les données $\bs{T}'$ et $\bs{T}''$ sont isomorphes. D'où la première assertion du lemme.

Supposons maintenant que les données $\bs{T}'$ et $\bs{T}''$ sont isomorphes: il existe des éléments $x\in \hat{G}$ et $z\in Z(\hat{G})$ 
tels que
$$
x\ES{T}'x^{-1}= \ES{T}'',\quad x\tilde{s}'x^{-1}= z\tilde{s}''.
$$
D'après la première assertion du lemme, on peut supposer que $\tilde{s}'_1=s'_1\hat{\theta}_1$ et $\tilde{s}''_1= s''_1\hat{\theta}_1$ pour des éléments $s'_1,\, s''_1\in \hat{T}_1$. On a donc
$$
\tilde{s}'= s'\hat{\theta},\quad s'=(s'_1,\ldots ,s'_1),
$$
et
$$
\tilde{s}''= s''\hat{\theta},\quad s''=(s''_1,\ldots ,s''_1).
$$
Choisissons un élément $(h'_1,\phi_1)\in \ES{T}'_1$, et posons
$$
\bs{h}'= h'\phi, \quad h'= (1,\ldots ,1,h'_1)\in \hat{G}.
$$
De même, choisissons un élément 
$(h''_1,\phi_1)\in \ES{T}''_1$, et posons
$$
\bs{h}''= h''\phi, \quad h''= (1,\ldots ,1,h''_1)\in \hat{G}.
$$
Par construction, on a $(h',\phi)\in \ES{T}'$ et $(h'',\phi)\in \ES{T}''$, et
$$
x(h'\!,\phi)x^{-1}= (yh''\!,\phi)
$$
pour un élément $y\in \hat{G}_{\tilde{s}''}= ((\hat{G}_1)_{\tilde{s}''_1})^{\times q}$. \'Ecrivons
$$
x=(x_1,\ldots ,x_q),\quad z=(z_1,\ldots ,z_q), \quad y=(y_1,\ldots ,y_q).
$$
Pour $i=1,\ldots ,q$, on a $x_i\in \hat{G}_1$, $z_i\in Z(\hat{G}_1)$ et $y_i\in (\hat{G}_1)_{\tilde{s}''_1}$. On obtient:
\begin{itemize}
\item $x_i x_{i+1}^{-1} = y_i$ pour $i=1,\ldots q-1$;
\item $x_q h'_1\phi_1(x_1)= y_qh''_1$.
\end{itemize}
On en déduit que
$$
x_1h'_1\phi_1(x_1)^{-1}= \bar{y} h''_1,\quad \bar{y}=y_1\cdots y_q.
$$
Puisque $\bar{y}$ appartient à $(\hat{G}_1)_{\tilde{s}''_1}$, on obtient que $x_1\ES{T}'_1 x_1^{-1}=\ES{T}''_1$. Comme d'autre part on a $x_1\tilde{s}'_1x_1^{-1}= z_1\tilde{s}''_1$, les données $\bs{T}'_{\!1}$ et $\bs{T}''_{\!1}$ sont isomorphes. Cela prouve que l'application du lemme est injective.

Il reste à prouver qu'elle est surjective. Soit $\bs{T}'=(T',\ES{T}',\tilde{s})\in \mathfrak{E}_{\rm t-nr}$. On peut supposer que $\tilde{s}=s\hat{\theta}$ pour un élément $s\in \hat{T}= \hat{T}_1^{\times q}$. Choisissons un élément $(h,\phi)\in \ES{T}'$, et posons $\bs{h}=h\phi \in \hat{G}W_F$. On a l'égalité $\tilde{s}\bs{h} = a'(\phi) \bs{h} \tilde{s}$ dans $\hat{G}W_F\hat{\theta}=\hat{G}\hat{\theta}W_F$, 
où $a'$ est un cocycle non ramifié de $W_F$ à valeurs dans $Z(\hat{G})$ dans la classe $\bs{a}$. Puisque les cocycle $a$ et $a'$ sont cohomologues, il existe un élément $z\in Z(\hat{G})$ tel que $a'(\phi)= z^{-1}\phi(z) a(\phi)$. Quitte à remplacer $\tilde{s}$ par $z\tilde{s}$, on peut supposer $a'=a$. \'Ecrivons
$$
s= (s_1, \ldots , s_q),\quad h= (h_1,\ldots ,h_q), 
$$
suivant la décomposition $\hat{G}= \hat{G}_1^{\times q}$. Alors l'équation $\tilde{s}\bs{h}= a(\phi)\bs{h}\tilde{s}$ s'écrit:
\begin{itemize}
\item $s_i\hat{\theta}_1(h_i)= h_i s_{i+1}$ pour $i=1,\ldots ,q-1$;
\item $s_q\hat{\theta}_1(h_q)= a_1(\phi_1) h_q \phi_1(s_1)$.
\end{itemize}
Soit $x=(1,h_1,h_1h_2,\ldots , h_1\cdots h_{q-1})\in \hat{G}$. Alors en posant $\bar{h}=h_1\cdots h_{q-1}\in \hat{G}_1$, on a 
$$
x\tilde{s}x^{-1}= (s_1,\ldots ,s_1)\hat{\theta}, \quad x(h,\phi)x^{-1}= ((1,\ldots , 1,\bar{h}),\phi),
$$
et en posant $\tilde{s}_1=s_1\hat{\theta}_1$ et $\bs{h}_1= \bar{h}\phi_1$, on a l'égalité dans 
$\hat{G}_1W_{F_1}\hat{\theta}_1=\hat{G}_1\hat{\theta}_1W_{F_1}$
$$
\tilde{s}_1\bs{h}_1= a_1(\phi_1)\bs{h}_1\tilde{s}_1.
$$
Soit $\ES{T}'_1$ le sous--groupe fermé de ${^LG_1}$ engendré par $(G_1)_{\tilde{s}_1}$, par $I_F$ et par $(\bar{h},\phi_1)$. C'est une extension scindée de $W_{F_1}$ par $(G_1)_{\tilde{s}_1}$, et en notant $T'_1$ le tore défini sur $F_1$ et déployé sur $F^{\rm nr}$ tel que $\hat{T}'_1=(\hat{G}_1)_{\tilde{s}}$ muni de l'action $(\phi_1)_{T'_1}$ de $\phi_1$ donnée par $(\bar{h},\phi_1)$, on obtient une donnée endoscopique elliptique et non ramifiée $\bs{T}'_{\!1}=(T'_1,\ES{T}'_1,\tilde{s}_1)$ pour $(\wt{G}_1,\omega_1)$. Cette donnée s'envoie sur la 
classe d'isomorphisme de la donnée $\bs{T}'$ par l'application du lemme, qui est donc surjective.
\end{proof}

Revenons à la situation d'avant le lemme 2. Rappelons que $T'={\rm Res}_{F_1/F}(T'_1)$. On identifie $T'$ à $(T'_1)^{\times q}$ comme on 
l'a fait pour $G$, et on note
$$
\iota': T'_1\rightarrow T'= (T'_1)^{\times q}
$$
le plongement diagonal. Il est défini sur $F_1$ (tout comme l'identification $T'=(T'_1)^{\times q}$). Du plongement 
$\hat{\xi}_1:\hat{T}'_1=(\hat{G}_1)_{\tilde{s}_1}\rightarrow \hat{T}_1$ se déduit par dualité un morphisme
$$
\xi_1: T_1\rightarrow T'_1\simeq T_1/(1-\theta_1)(T_1).
$$
Ce morphisme n'est en général pas défini sur $F_1$, mais il vérifie $
\sigma(\xi_1)= \xi_1 \circ {\rm Int}_{\alpha_{T'_1}(\sigma)}$ pour tout $\sigma \in \Gamma_{F_1},
$
où $\sigma \mapsto \alpha_{T'_1}(\sigma)$ est le cocycle de $\Gamma_{F_1}$ à valeurs dans $W_1^{\hat{\theta}_1}=W_1^{\theta_1}$ défini par ${\rm Int}_{\alpha_{T'_1}(\sigma)}\circ \sigma(\hat{\xi}_1)= \hat{\xi}_1$. 
Ici $W_1$ est le groupe de Weyl $W^{G_1}(T_1)$ de $G_1$, que l'on identifie à $W^{\hat{G}_1}(\hat{T}_1)$. 
On pose
$$
\wt{T}'_1= T'_1\times Z(\wt{G}_1, \ES{E}_1)= T'_1\theta'_1,\quad \theta'_1= {\rm id}_{T'_1}. 
$$
De la même manière, du plongement $\hat{\xi}= \hat{\xi}_1^{\otimes q}: \hat{T}'=\hat{G}_{\tilde{s}}\rightarrow \hat{T}$ se déduit par dualité un morphisme
$$
\xi: T \rightarrow T'\simeq T/(1-\theta)(T)
$$
qui vérifie $
\sigma(\xi)= \xi \circ {\rm Int}_{\alpha_{T'}(\sigma)}$ pour tout $\sigma \in \Gamma_F$, où 
$\sigma \mapsto \alpha_{T'}(\sigma)$ est le cocycle de $\Gamma_F$ à valeurs dans $W^{\hat{\theta}}= W^\theta$ donné par
\begin{itemize}
\item $\alpha_{T'}(\sigma)= (\alpha_{T'_1}(\sigma), \ldots , \alpha_{T'_1}(\sigma))$ si $\sigma\in \Gamma_{F_1}$,
\item $\alpha_{T'}(\phi)=(1,\ldots ,1, \alpha_{T'_1}(\phi_1))$.
\end{itemize} 
Ici $W$ est le groupe de Weyl $W^G(T)$ de $G$, que l'on identifie à $W^{\hat{G}}(\hat{T})$. 
On pose
$$
\wt{T}'= T'\times Z(\wt{G},\ES{E})= T\theta',\quad \theta'= {\rm id}_{T'}.
$$
Le morphisme $\iota':T'_1\rightarrow T'$ se prolonge trivialement en un morphisme d'espaces tordus
$$
\tilde{\iota}\hspace{.01in}': \wt{T}'_1\rightarrow \wt{T}',\, t'_1\theta'_1\mapsto \iota'(t'_1)\theta',
$$
qui est lui aussi défini sur $F_1$. On a
$$
\iota'(T'_1(F_1))= T'(F),\quad \tilde{\iota}\hspace{.01in}'(\wt{T}'_1(F_1))= \wt{T}'(F).
$$

Le diagramme suivant
$$
\xymatrix{T_1\ar[d]_{\iota}\ar[r]^{\xi_1} &T'_1\ar[d]^{\iota'}\\
T\ar[r]_{\xi} & T'
}\leqno{(1)}
$$
est commutatif. Le morphisme $\tilde{\mu}=\tilde{\iota}\hspace{.01in}'\times \tilde{\iota}: \wt{T}'_1\times \wt{G}_1\rightarrow \wt{T}'\times \wt{G}$ est défini sur $F_1$, et il induit un isomorphisme d'espaces topologiques tordus
$$
\tilde{\mu}_{F_1}: \wt{T}'_1(F_1)\times \wt{G}_1(F_1)\rightarrow \wt{T}'(F)\times \wt{G}(F).
$$
On sait que les donn\'ees $\bs{T}'_{\!1}$ et $\bs{T}'$ sont relevantes. Les choix effectués plus haut permettent de définir des facteurs de transfert normalisés
$$
\Delta_1: \ES{D}(\bs{T}'_{\!1})\rightarrow {\Bbb C}^\times\quad \Delta: \ES{D}(\bs{T}')\rightarrow {\Bbb C}^\times.
$$ 

\begin{monlem3}
Le morphisme $\tilde{\mu}_{F_1}$ induit par restriction une application bijective
$$
\ES{D}(\bs{T}'_{\!1})\rightarrow \ES{D}(\bs{T}').
$$
De plus, pour $(\delta_1,\gamma_1)\in \ES{D}(\bs{T}'_{\!1})$ 
et $(\delta,\gamma)=\tilde{\mu}_{F_1}(\delta_1,\gamma_1)\in \ES{D}(\bs{T}')$, on a
$$
\Delta_1(\delta_1,\gamma_1) = \Delta(\delta,\gamma).
$$
\end{monlem3}

\begin{proof} La commutativité du diagramme (1) assure que $\tilde{\mu}_{F_1}$ envoie $\ES{D}(\bs{T}'_1)$ dans $\ES{D}(\bs{T}')$, et il s'agit de vérifier que l'application obtenue $\ES{D}(\bs{T}'_1)\rightarrow \ES{D}(\bs{T}')$ est surjective. 
Un élément ($\delta,\gamma)= (g'\theta',g\theta)$ de $\wt{T}'(F)\times \wt{G}(F)$ est dans $\ES{D}(\bs{T}')$ si et seulement s'il existe un $x\in G$ tel que $x^{-1} g \theta(x)\in T$ et $\xi(x^{-1} g\theta(x))= g'$. \'Ecrivons $x = (x_1,\ldots ,x_q)$. Comme $g'=\iota'(g'_1)$ et $g= \iota(g_1)$ avec $g'_1\in T'_1(F_1)$ et $g_1\in G_1(F_1)$, pour $i=1,\ldots ,q$, on a
$$
x_i^{-1} g_1 \theta_1(x_i)\in T_1, \quad \xi_1(x_i^{-1} g_1 x_i) = g'_1.
$$
Donc $(g'_1\theta'_1,g_1\theta_1)\in \ES{D}(\bs{T}'_1)$, et $\tilde{\mu}_{F_1}(g'_1\theta'_1,g_1\theta_1)= (\delta,\gamma)$. 

Quant à l'égalité des facteurs de transfert, il s'agit de vérifier que chacun des deux termes à droite de l'égalité (1) de 
\ref{les hypothèses (suite)} pour $(\delta_1,\gamma_1)$ co\"{\i}ncide avec le même terme pour $(\delta,\gamma)$. Commen\c{c}ons par le terme $\Delta_{\rm II}$. On note $T_{1,0}$ le tore maximal $T_1$ de $G_1$ muni de l'action galoisienne tordue par le cocycle $\sigma \mapsto \alpha_{T'_1}(\sigma)$ de $\Gamma_{F_1}$ à valeurs dans $W_1^{\theta_1}$, et $T_0$ le tore maximal $T$ de $G$ muni de l'action galoisienne tordue par le cocycle $\sigma \mapsto \alpha_{T'}(\sigma)$ de $\Gamma_F$ à valeurs dans $W^\theta$. Soit $\Sigma(T_{0,1})$ l'ensemble des racines de $T_{0,1}$ dans l'algèbre de Lie de $G$. On définit de la même manière l'ensemble $\Sigma(T_0)$, et on identifie $\Sigma(T_{0,1})$ à un sous--ensemble de $\Sigma(T_0)$ via le $F_1$--plongement $x\mapsto (1,\ldots ,1,x)$ de $T_1$ dans $T$. Alors on a 
$$
\Sigma(T_0)= \coprod_{i=0}^{q-1} \phi^i(\Sigma(T_{1,0})).$$ 
On fixe des $a$--data et des $\chi$--data non ramifiées pour l'action de $\Gamma_{F_1}$ sur $T_{1,0}$. Elles définis\-sent des $a$--data et des $\chi$--data non ramifiées pour l'action de $\Gamma_F$ sur $T_0$, et par définition du facteur $\Delta_{\rm II}$, on a l'égalité
$$
\Delta_{1,{\rm II}}(\delta_1,\gamma_1)=\Delta_{\rm II}(\delta,\gamma).
$$

Reste à traiter le second terme à droite de l'égalité (1) de \ref{les hypothèses (suite)}. Soit un couple $(\alpha_1,t_1)$ formé d'un cocycle $\sigma \mapsto \alpha_1(\sigma)$ de $\Gamma_{F_1}$ à valeurs dans $\hat{T}_{1,0,{\rm sc}}= \hat{T}_{1,0}$ et d'un élément $t_1\in \hat{T}_{1,0,{\rm ad}}$, définissant une classe de cohomologie dans ${\rm H}^{1,0}(W_F; \hat{T}_{1,0,{\rm sc}}\xrightarrow{1-\hat{\theta}_{\gamma_1}} \hat{T}_{1,{\rm 0},{\rm ad}})$. \`A ce couple on associe comme suit un couple $(\alpha, t)$ formé d'un cocycle de $W_F$ à valeurs dans $\hat{T}_{0,{\rm sc}}=\hat{T}_0$ et d'un élément $t\in \hat{T}_{0,{\rm ad}}$:
\begin{itemize}
\item $\alpha(\sigma)= (\alpha_1(\sigma),\ldots,\alpha_1(\sigma))$ pour $\sigma\in W_{F_1}$;
\item $\alpha(\phi)= (1,\ldots ,1,\alpha_1(\phi_1))$;
\item $t=(t_1,\ldots ,t_1)$.
\end{itemize} 
Ce couple définit une classe de cohomologie ${\rm H}^{1,0}(W_F; \hat{T}_{0,{\rm sc}}\xrightarrow{1-\hat{\theta}_{\gamma}} \hat{T}_{{\rm 0},{\rm ad}})$, et l'applica\-tion $(\alpha_1,t_1)\mapsto (\alpha,t)$ induit un isomorphisme
$$
{\rm H}^{1,0}(W_{F_1}; \hat{T}_{1,0,{\rm sc}}\xrightarrow{1-\hat{\theta}_{\gamma_1}} \hat{T}_{1,{\rm 0},{\rm ad}})
\buildrel \simeq\over{\longrightarrow} {\rm H}^{1,0}(W_F; \hat{T}_{0,{\rm sc}}\xrightarrow{1-\hat{\theta}_{\gamma}} \hat{T}_{{\rm 0},{\rm ad}}).
$$
On a aussi un isomorphisme
$$
{\rm H}^{1,0}(\Gamma_{F_1}; T_{1,0,{\rm sc}}\xrightarrow{1-\theta_{\gamma_1}} T_{1,0,{\rm ad}}) 
\buildrel\simeq\over{\longrightarrow} {\rm H}^{1,0}(\Gamma_{F}; T_{0,{\rm sc}}\xrightarrow{1-\theta_{\gamma}} T_{0,{\rm ad}}),
$$
en dualité avec le précédent. \`A $h_1= h_{1,{\rm sc}}\in \hat{G}_{1,{\rm SC}}= \hat{G}_1$ est associé un cocycle non ramifié $t_{T_{1,0},{\rm sc}}$ de $W_F$ à valeurs dans $\hat{T}_{1,0,{\rm sc}}= \hat{T}_{1,0}$, et à $h=h_{\rm sc}\in \hat{G}_{\rm SC}= \hat{G}$ est associé un cocycle non ramifié $t_{T_0,{\rm sc}}$ de $W_F$ à valeurs dans $\hat{T}_{0,{\rm sc}}= \hat{T}_0$ (cf. \cite[I, 6.3]{MW}). 
On vérifie que les éléments $(t_{T_{1,0},{\rm sc}},s_{1,{\rm ad}})$ et $(t_{T_0,{\rm sc}},s_{\rm ad})$ se correspondent par le premier isomorphisme, et que les éléments $(V_{T_{1,0}},\nu_{1,0,{\rm ad}})$ et $(V_{T_0},\nu_{\rm ad})$ se correspondent par le second. 
\end{proof}

Soit $(\delta_1,\gamma_1)\in \ES{D}(\bs{T}'_{\!1})$, et soit $(\delta,\gamma)\in \ES{D}(\bs{T}')$ l'élément $\tilde{\mu}_{F_1}(\delta_1,\gamma_1)$. On a
$$G_{\gamma}= {\rm Res}_{F_1/F}((G_1)_{\gamma_1}).$$
On identifie $G_\gamma$ à $(G_{\gamma_1})^{\times q}$ comme on l'a fait pour $G$. Cette identification est définie sur $F_1$, 
et le plongement diagonal $\iota: G_1\rightarrow G$ induit par restriction un 
morphisme $\iota_{\gamma}:(G_1)_{\gamma_1}\rightarrow G_\gamma$ qui n'est autre que le plongement diagonal. Il est défini sur $F_1$, et on a
$$
\iota_{\gamma}((G_1)_{\gamma_1}(F_1))= G_\gamma(F).
$$
Via $\iota_{F_1}: G_1(F_1)\buildrel \simeq\over{\longrightarrow} G(F)$ et $\iota_{\gamma,F_1}: (G_1)_{\gamma_1}(F_1)\buildrel \simeq\over{\longrightarrow} G_\gamma(F)$, les mesures normalisées définissant les intégrales orbitales ordinaires sur $\wt{G}_1(F_1)$ et sur $\wt{G}(F)$ se correspondent, et comme
$$
D^{\wt{G}_1}(\gamma_1)= D^{\wt{G}}(\gamma),
$$
on a
$$
I^{\wt{G}_1}(\gamma_1,\omega_1,f\circ \tilde{\iota}_{F_1})= I^{\wt{G}}(f, \omega,\gamma), \quad f\in C^\infty_{\rm c}(\wt{G}(F)).
$$
On a aussi
$$
d(\theta_1^*)= d(\theta^*)
$$
et
$$
\tilde{\iota}_{F_1}(\wt{K}_1)= \wt{K},\quad \tilde{\iota}\hspace{.01in}'_{\!F_1}(\wt{K}'_1)= \wt{K}'.
$$
Grâce aux lemmes 1, 2 et 3, on en déduit que pour prouver le lemme fondamental pour les données $\bs{T}'\in \mathfrak{E}_{\rm t-nr}(\wt{G},\omega)$ et la fonction $\bs{1}_{\wt{K}}$ (théorème de \ref{l'énoncé}), 
il suffit de le faire pour les données $\bs{T}'_{\!1}\in \mathfrak{E}_{\rm t-nr}(G_1,\omega_1)$ et la fonction $\bs{1}_{\wt{K}_1}$. 

On peut donc supposer $q=1$.

\subsection{Troisième réduction}\label{troisième réduction}
On suppose dans ce numéro que l'automorphisme $\theta$ opère transitivement sur les composantes connexes de $\bs{\Delta}$. Soient $\bs{\Delta}_1,\ldots ,\bs{\Delta}_r$ les $\phi$--orbites dans l'ensemble de ces composantes connexes. Puisque $\phi$ et $\theta$ commutent, ces ensembles $\bs{\Delta}_i$ sont permutés transitivement par $\theta$, et on peut supposer que $\theta(\bs{\Delta}_{i+1})=\bs{\Delta}_i$ pour $i=1,\ldots ,r-1$. 
\`A la décomposition
$$
\bs{\Delta} = \bs{\Delta}_1\coprod \ldots \coprod \bs{\Delta}_r
$$
correspond une décomposition
$$
G=G_1\times \cdots \times G_r,
$$
qui est définie sur $F$ et $\theta$--stable. Pour $i=1,\ldots ,r-1$, on identifie $G_{i+1}$ à $G_1$ via $\theta^i$. Avec ces identifications, le $F$--automorphisme 
$\theta$ de $G= G_1^{\times r}$ est donné par
$$
\theta(x_1,\ldots ,x_r)=(x_2,\ldots ,x_r,\theta_1(x_1)), \quad x_i\in G_1,
$$
où $\theta_1$ est le $F$--automorphisme de $G_1$ donné par $\theta^r\vert_{G_1}$. Quant à l'automorphisme $\phi$ de $G(\overline{F})$, puisqu'il commute à $\theta$, il est donné par $\phi= \phi_1^{\otimes r}$ pour un automorphisme $\phi_1$ de $G_1(\overline{F})$ qui commute à $\theta_1$. En d'autres termes, pour $(x_1,\ldots ,x_r)\in G(\overline{F})= G_1(\overline{F})^{\times r}$, on a
$$
\sigma(x_1,\ldots ,x_r)= (\sigma(x_1),\ldots ,\sigma(x_r)),\quad \sigma\in \Gamma_F.
$$
On pose $\wt{G}_1= G_1\theta_1$. Le groupe $G_1$ est quasi--déployé sur $F$ et déployé sur une extension non ramifiée de $F$, et 
$\wt{G}_1$ est un 
$G_1$--espace tordu défini sur $F$. 

On procède comme dans \cite[chap.~1, 5]{AC}. 
Pour une fonction $f\in C^\infty_{\rm c}(\wt{G}(F))$, on note $f_\theta\in C^\infty_{\rm c}(G(F))$ la fonction définie 
par $f_\theta(g)= f(g\theta)$, $g\in G(F)$. 
Soit une fonction $f\in C^\infty_{\rm c}(\wt{G}(F))$ telle que $f_\theta$ est de la forme 
$f_\theta = \varphi_1\otimes \cdots \otimes \varphi_r$ pour des 
fonctions $\varphi_i\in C^\infty_{\rm c}(G_1(F))$. Pour $\gamma = g\theta\in \wt{G}(F)$ et $x\in G(F)$, on a
$$
f(x^{-1}\gamma x)=  \varphi_1(x_1^{-1} g_1 x_2)\cdots \varphi_{r-1}(x_{r-1}^{-1}g_{r-1}x_r)
\varphi_r(x_r^{-1} g_r \theta_1(x_1)),
$$
où l'on a posé $g=(g_1,\ldots ,g_r)$ et $x=(x_1,\ldots ,x_r)$. Posons
$$
\bar{\gamma}= \bar{g}\theta_1\in \wt{G}_1(F),\quad \bar{g}=g_1\cdots g_r.
$$
Posons aussi
$$
\bar{g}_i= g_1\ldots g_i,\quad i=1,\ldots ,r-1.
$$
Pour $x=(x_1,\ldots ,x_r)\in G(\overline{F})$, on $x^{-1}\gamma x= \gamma$ si et seulement si
\begin{itemize}
\item $x_i^{-1} g_i x_{i+1}= g_i$ pour $i=1,\ldots , r-1$,
\item $x_r^{-1} g_r \theta_1(x_1)= g_r$;
\end{itemize} \cad si et seulement si 
\begin{itemize}
\item $x_{i+1} = \bar{g}_i^{-1}x_1 \bar{g}_i$ pour $i=1,\ldots , r-1$,
\item $x_1 \bar{g}\theta_1(x_1) = \bar{g}$.
\end{itemize}
Posons
$$
x_\gamma = (1,\bar{g}_1,\ldots ,\bar{g}_{r-1})\in G(F).
$$
Le plongement diagonal $\iota: G_1\rightarrow G_1^{\times r}= G$ est défini sur $F$, et le morphisme ${\rm Int}_{x_\gamma}\circ \iota$ induit 
un $F$--isomorphisme de $(G_1)^{\bar{\gamma}}$ sur $G^\gamma$.
On en déduit en particulier que $\gamma$ est fortement régulier (dans $\wt{G}$) si et seulement si 
$\bar{\gamma}$ fortement régulier (dans $\wt{G}_1$). Dans ce cas, le morphisme ${\rm Int}_{x_\gamma}\circ \iota$ induit 
un $F$--isomorphisme de $(G_1)_{\bar{\gamma}}$ sur $G_\gamma$, que l'on note $\iota_\gamma$.

\begin{marema1}
{\rm On a
$$
{\rm Int}_{x_\gamma}(\gamma)= (1,\ldots ,1,\bar{g})\theta,
$$
et l'isomorphisme 
$$
\iota_{(1,\ldots ,1,\bar{g})\theta}: G_{1,\bar{\gamma}}\buildrel \simeq \over{\longrightarrow} G_{(1,\ldots ,1,\bar{g})\theta}\subset (G_{1,\bar{\gamma}})^{\times r}
$$
n'est autre que celui donné par le plongement diagonal.\hfill $\blacksquare$
}
\end{marema1}

\begin{monlem1}
Deux éléments $\gamma = g\theta$ et $\gamma'= g'\theta$ de $\wt{G}(F)$ sont conjugués dans $G(F)$ si et seulement si les 
éléments $\bar{\gamma}=\bar{g}\theta_1$ et $\bar{\gamma}'= \bar{g}'\theta_1$ de $\wt{G}_1(F)$ sont conjugués dans $G_1(F)$.
\end{monlem1}

\begin{proof}
D'après la remarque 1, $\gamma$ et $\gamma'$ sont conjugués dans $G(F)$ si et seulement s'il existe un $x=(x_1,\ldots ,x_r)\in G(F)$ tel que
$$
x (1,\ldots ,1 ,\bar{g})\theta (x^{-1})= (1,\ldots ,1,\bar{g}'). 
$$
Puisque $\theta(x^{-1}) = (x_2^{-1},\dots , x_{r}^{-1}, \theta_1(x_1)^{-1})$, on a $x_r= \cdots = x_1$, \cad $x= \iota(x_1)$, et $x_1\bar{g} \theta_1(x_1)^{-1}= \bar{g}'$, \cad $x_1 \bar{\gamma} x_1^{-1}= \bar{\gamma}'$. D'où le lemme. 
\end{proof}

Le caractère $\omega$ de $G(F)$ se décompose en
$$
\omega= \omega_1\otimes \cdots \otimes \omega_r
$$
pour des caractères $\omega_i$ de $G_1(F)$. Soit $\overline{\omega}$ le caractère produit $\omega_1\cdots \omega_r$ de $G_1(F)$. 
Pour $\gamma=g\theta\in \wt{G}(F)$ fortement régulier, on a $\omega_i\circ {\rm Int}_{\smash{\bar{g}_{i-1}^{-1}}}=\omega_i$ ($i=2,\ldots r$), et le caractère $\omega$ est trivial sur $G_\gamma(F)$ si et seulement si le caractère $\overline{\omega}$ est trivial sur $(G_1)_{\bar{\gamma}}(F)$.

Soit $dg_1$ la mesure de Haar sur $G_1(F)$ telle que $dg= (dg_1)^{\otimes r}$ (mesure produit). 
On munit l'espace $C^\infty_{\rm c}(G_1(F))$ du produit de convolution donné par
$$
\varphi* \varphi'(x)= \int_{G_1(F)}\varphi(g_1) \varphi'(g_1^{-1}x)dg_1, \quad \varphi,\varphi'\in C^\infty_{\rm c}(G_1(F)).
$$

Soit $\gamma=g\theta\in \wt{G}(F)$ fortement régulier (dans $\wt{G}$). Soit $dg_{1,\bar{\gamma}}$ une mesure de Haar sur le centralisateur 
connexe $(G_1)_{\bar{\gamma}}(F)$, et soit $dg_\gamma$ la mesure de Haar sur $G_\gamma(F)$ déduite de $dg_{1,\bar{\gamma}}$ par l'isomorphisme ci--dessus. On suppose que le caractère $\omega$ est trivial sur $G_\gamma(F)$. Alors  l'intégrale $\int_{G_\gamma(F)\backslash G(F)}\omega(x)f(x^{-1}\gamma x)\textstyle{dx\over dx_\gamma}$ 
est égale à
$$
\int_{G_\gamma(F)\backslash G_1(F)^{\times r}}\omega_1(x_1)\cdots \omega_r(x_r) \varphi_1(x_1^{-1}g_1x_2)\cdots 
\varphi_r(x_r^{-1}g_r \theta_1(x_1)) \textstyle{dx\over dx_\gamma}.
$$
En posant (changement de variables)
\begin{itemize}
\item $y_1= x_1$,
\item $y_i= x_{i-1}^{-1}g_{i-1}x_i$ pour $i=2,\ldots ,r$,
\end{itemize}
on obtient
$$
x_i= \bar{g}_{i-1}^{-1}y_1\cdots y_i\quad (i=2,\ldots ,r),
$$
et l'intégrale ci--dessus devient
\begin{eqnarray*}
\lefteqn{
\int_{(G_1)_{\bar{\gamma}}(F)\times G_1(F)^{r-1}} \omega_1(y_1)\omega_2(\bar{g}_1^{-1}y_1y_2)\cdots \omega_r(\bar{g}_{r-1}^{-1}y_1\cdots y_r)} \\
& \varphi_1(y_2)\cdots \varphi_{r-1}(y_r)\varphi_r(y_r^{-1}y_{r-1}^{-1}\cdots y_1^{-1}\bar{g}\theta_1(y_1))\textstyle{dy_1\over dy_{1,\bar{\gamma}}}dy_2\cdots dy_r.
\end{eqnarray*}
Pour $i=1,\ldots ,r-1$, soit $\varphi'_i\in C^\infty_{\rm c}(G_1(F))$ la fonction définie par
$$
\varphi'_i = (\omega_{i+1}\cdots \omega_r)\varphi_i.
$$
Soit $\bar{f}\in C^\infty_{\rm c}(\wt{G}_1(F))$ la fonction définie par
$$
\bar{f}(y_1\theta_1)= \overline{\varphi}(y_1),\quad 
\overline{\varphi}= \varphi'_1* \cdots *\varphi'_{r-1}*\varphi_r.
$$
Alors on a
$$
\int_{G_\gamma(F)\backslash G(F)}\omega(x)f(x^{-1}\gamma x)\textstyle{dx\over dx_\gamma}=c(\gamma,\omega)^{-1}
\int_{(G_1)_{\bar{\gamma}}(F)\backslash G_1(F)}\overline{\omega}(y_1)\bar{f}(y_1^{-1}\bar{\gamma}y_1)\textstyle{dy_1\over dy_{1,\bar{\gamma}}},
$$
où la constante $c(\gamma,\omega)\in {\Bbb C}^\times$ est donnée par
$$
c(\gamma,\omega)= \omega_2(\bar{g}_1)\omega_3(\bar{g}_2)\cdots \omega_r(\bar{g}_{r-1}),
$$
\cad par
$$
c(\gamma,\omega)=\omega(x_\gamma).
$$

On a noté $\mathfrak{g}$ l'agèbre de Lie de $G$, et soit $\mathfrak{g}_\gamma$ celle de $G_\gamma$ (on suppose toujours que $\gamma$ est fortement régulier dans $\wt{G}$). De même, on note $\mathfrak{g}_1$ et $\mathfrak{g}_{1,\bar{\gamma}}$ les algèbres de Lie de $G_1$ et $G_{1,\bar{\gamma}}$. Rappelons que l'on a posé
$$
D^{\wt{G}}(\gamma) =\vert \det(1-{\rm ad}_\gamma; \mathfrak{g}(F)/\mathfrak{g}_\gamma(F))\vert_F.
$$
On pose aussi
$$
D^{\wt{G}_1}(\bar{\gamma})= \vert \det(1-{\rm ad}_{\bar{\gamma}}; \mathfrak{g}_1(F)/\mathfrak{g}_{1,\bar{\gamma}}(F))\vert_F.
$$

\begin{monlem2}
On a l'égalité
$$
D^{\wt{G}}(\gamma)= D^{\wt{G}_1}(\bar{\gamma}).
$$
\end{monlem2}

\begin{proof}
Quitte à conjuguer $\gamma= g\theta$ par $x_\gamma\in G(F)$, on peut supposer que $g= (1,\ldots ,1,\bar{g})$, où (rappel) on a posé 
$\bar{g}= g_1\cdots g_r$. Le $F$--endomorphisme ${\rm ad}_\gamma$ de $\mathfrak{g}= \mathfrak{g}_1^{\times r}$ est alors donné par la matrice 
$r\times r$ par blocs (où chaque bloc est un $F$--endomorphisme de $\mathfrak{g}_1$)
$$
A= \left(
\begin{array}{ccccc}
1 & -1 & 0 & \cdots & 0\\
0 & 1 & -1 & \ddots & \vdots \\
\vdots & \ddots & \ddots & \ddots & 0\\
0 & \cdots & 0 & 1& -1 \\
-{\rm ad}_{\bar{\gamma}} &0 & \cdots & 0& 1\\
\end{array}
\right).
$$
En multipliant $A$ à droite par la matrice $r\times r$ par blocs
$$
B= \left(\begin{array}{ccccc}
1 & 0 & \cdots & &0\\
{\rm ad}_{\bar{\gamma}} & 1 & 0 & \cdots & 0\\
\vdots & 0& \ddots& \ddots & \vdots \\
\vdots & \vdots & \ddots &\ddots &0\\
{\rm ad}_{\bar{\gamma}} & 0&\cdots & 0& 1
\end{array}\right), 
$$
on obtient la matrice triangulaire supérieure $r\times r$ par blocs
$$
AB= \left(\begin{array}{ccccc}
1-{\rm ad}_{\bar{\gamma}} & -1 & 0 & \cdots & 0\\
0 & 1 & \ddots & \ddots & \vdots \\
\vdots & \ddots  & \ddots & \ddots & 0 \\
\vdots && \ddots & 1& -1\\
0 &\cdots &\cdots & 0& 1
\end{array}
\right).
$$
Puisque $\gamma$ est semisimple, l'algèbre de Lie $\mathfrak{g}_\gamma$ co\"{\i}ncide avec le centralisateur
$$
\{X\in \mathfrak{g}: {\rm ad}_\gamma(X)=X\}.
$$
De même $\mathfrak{g}_{1,\bar{\gamma}}$ co\"{\i}ncide avec $\{X_1\in \mathfrak{g}_1: {\rm ad}_{\bar{\gamma}}(X_1)=X_1\}$, et on a
$$
\mathfrak{g}_\gamma = \{(X_1,\ldots, X_1): X_1\in \mathfrak{g}_{1,\bar{\gamma}}\}.
$$
La matrice $B$ est inversible, et elle induit un $F$--isomorphisme de $\mathfrak{g}_{1,\bar{\gamma}}\times \{0\}\times \cdots \times \{0\}$ sur $\mathfrak{g}_\gamma$. D'où le résultat (par passage aux points $F$--rationnels), puisque le déterminant d'une matrice triangulaire par blocs est égal au produit des déterminants des blocs diagonaux.
\end{proof}

Posons
$$
I^{\wt{G}_1}(\bar{\gamma},\overline{\omega},\bar{f})=
D^{\wt{G}_1}(\bar{\gamma})\int_{G_{1,\bar{\gamma}}(F)\backslash G_1(F)}\overline{\omega}(x_1)\bar{f}(x_1^{-1}\bar{\gamma}x_1)
\textstyle{dx_1\over dx_{1,\bar{\gamma}}}. 
$$
On a donc l'égalité
$$
c(\gamma,\omega)I^{\wt{G}}(\gamma, \omega, f)= I^{\wt{G}_1}(\bar{\gamma},\overline{\omega},\bar{f}).\leqno{(1)}
$$

\begin{marema2}
{\rm 
D'après la remarque 1, on a
$$
c(\gamma,\omega)I^{\wt{G}}(\gamma,\omega,f)= I^{\wt{G}}((1,\ldots,1,\bar{g})\theta, \omega, f),
$$
où, pour définir l'intégrale orbitale de droite, on a pris comme mesure de Haar sur $G_{(1,\ldots ,1,\bar{g})\theta}(F)$ l'image de 
$dg_1$ par l'isomorphisme
$$\iota_{(1,\ldots ,1,\bar{g})\theta}:G_{1,\bar{\gamma}}(F)\rightarrow G_{(1,\ldots ,1,\bar{g})\theta}(F),\, x_1\mapsto (x_1,\ldots ,x_1).\eqno{\blacksquare}
$$ 
}
\end{marema2}

La paire de Borel épinglée $\ES{E}=(B,T,\{E_\alpha\}_{\alpha\in \Delta})$ de $G= G_1^{\times r}$ est définie sur $F$ et $\theta$--stable. Elle se décompose en
$\ES{E}= \ES{E}_1\times \cdots \times \ES{E}_1$ pour une paire de Borel épinglée 
$\ES{E}_1=(B_1,T_1, \{E_\alpha\}_{\alpha\in \Delta_1})$ de $G_1$, définie sur $F$ et $\theta_1$--stable (i.e. on a $Z(\wt{G}_1, \ES{E}_1)= \{\theta_1\}$). Le sous--groupe hyperspécial $K$ de $G(F)$ associé à $\ES{E}$ se décompose en $K = K_1^{\times r}$, où $K_1$ est le sous--groupe hyperspécial de $G_1(F)$ associé à $\ES{E}_1$. Ce dernier est $\theta_1$--stable, et l'on pose $\wt{K}_1= K_1\theta_1$. Soit $f={\bf 1}_{\wt{K}}$. Alors $\varphi = f_\theta$ est la fonction caractéristique de $K$, et elle se décompose en 
$\varphi = \bs{1}_{K_1}\otimes \cdots \otimes \bs{1}_{K_1}$. Puisque le caractère $\omega$ de $G(F)$ est non ramifié, pour $i=1,\ldots ,r$, le caractère $\omega_i$ de $G_1(F)$ est non ramifié, donc trivial sur $K_1$. La fonction $\overline{\varphi}$ est donc égale à
$$
\bs{1}_{K_1}* \cdots *\bs{1}_{K_1}= \bs{1}_{K_1},
$$
et on a $\bar{f}= \bs{1}_{\wt{K}_1}$. Pour $\gamma=g\theta\in \wt{G}(F)$ fortement régulier, on a donc
$$
c(\gamma,\omega)I^{\wt{G}}(\gamma,\omega, \bs{1}_{\wt{K}})= I^{\wt{G}_1}(\bar{\gamma}, \overline{\omega}, \bs{1}_{\wt{K}_1}).
\leqno{(2)}
$$

\vskip2mm

\subsection{Troisième réduction (suite)}\label{troisième réduction (suite)}
Continuons avec les notations de \ref{troisième réduction}, et passons du côté dual. Le groupe $G$ est quasi--déployé sur $F$ et déployé sur une extension non ramifiée de $F$. Le groupe $G_1$ est lui aussi quasi--déployé sur $F$ et déployé sur une extension non ramifiée de $F$ (la même que pour $G$). Le $L$--groupe ${^LG}= \hat{G}\rtimes W_F$ s'obtient à partir du $L$--groupe ${^LG_1}=\hat{G}_1\rtimes W_F$ en munissant le groupe dual $\hat{G}= \hat{G}_1^{\times r}$ de l'action galoisienne produit $\sigma \mapsto \sigma_G= \sigma_{G_1}\otimes \cdots \otimes \sigma_{G_1}$ ($\sigma \in \Gamma_F$).

Soit $\bs{T}'_{\!1}=(T'_1,\ES{T}'_1, \tilde{s}_1)\in \mathfrak{E}_{\rm t-nr}(G_1,\overline{\omega})$. Comme en \ref{deuxième réduction}, on reprend les hypothèses habituelles (cf. \cite[2.6]{LMW}): l'action $\phi_{G_1}$ de $\phi$ sur $\hat{G}_1$ stabilise une paire de Borel épinglée $\hat{\ES{E}}_1$ de $\hat{G}_1$. La paire de Borel $(\hat{B}_1,\hat{T}_1)$ sous--jacente à $\hat{\ES{E}}_1$ est stabilisée par ${\rm Int}_{\tilde{s}_1}$, et on note $\hat{\theta}_1$ l'auto\-morphisme de $\hat{G}_1$ qui stabilise $\hat{\ES{E}}_1$ et commute à l'action galoisienne $\sigma \mapsto \sigma_{G_1}$ ($\sigma\in \Gamma_F$), \cad à $\phi_{G_1}$. On a donc $\tilde{s}_1= s_1 \hat{\theta}_1$ pour un élément $s_1\in \hat{T}_1$. Choisissons un élément $(h_1,\phi)\in \ES{T}'_1$. Il définit un isomorphisme $\ES{T}'_1\buildrel \simeq\over{\longrightarrow}{^LT'_1}$. Posons $\bs{h}_1=h_1\phi\in \hat{G}_1W_F$. On a l'égalité dans $\hat{G}_1W_F\hat{\theta}_1= \hat{G}_1\hat{\theta}_1W_F$
$$
\tilde{s}_1\bs{h}_1 = \bar{a}(\phi)\bs{h}_1\tilde{s}_1,
$$
où $\bar{a}(\phi)$ est un élément de $Z(\hat{G}_1)$ définissant la classe de cohomologie non ramifiée $\overline{\bs{a}}\in {\rm H}^1(W_F,Z(\hat{G}_1))$ correspondant à $\overline{\omega}$. Puisque $\overline{\omega}= \omega_1\cdots \omega_r$, on peut écrire
$$
\bar{a}(\phi)= a_1(\phi)\cdots a_r(\phi),
$$
où $a_i(\phi)$ est un élément de $Z(\hat{G}_1)$ définissant la classe de cohomologie non ramifiée $\overline{a}_i\in {\rm H}^1(W_F,Z(\hat{G}_1))$ correspondant à $\omega_i$.
Posons $\hat{\ES{E}}= \hat{\ES{E}}_1\times \cdots \times \hat{\ES{E}}_1$. C'est une paire de Borel épinglée de $\hat{G}$, qui est stable sous l'action $\phi_G$ de $\phi$ sur $\hat{G}$ et aussi sous l'action de l'automorphisme $\hat{\theta}$ de $\hat{G}$ donné par
$$
\hat{\theta}(x_1,\ldots ,x_r)= (x_2,\ldots ,x_r, \hat{\theta}_1(x_1)), \quad (x_1,\ldots ,x_r)\in \hat{G}= \hat{G}_1^{\times r}.
$$
Notons que $\hat{\theta}$ s'obtient en identifiant la $i$--ième composante $\hat{G}_1$ de $\hat{G}= \hat{G}_1^{\times r}$ au groupe dual de la $(r-i+1)$--ième composante $G_1$ de $G= G_1^{\times r}$ (cf. la remarque de \ref{deuxième réduction}). 
Posons
$$
\tilde{s}= s\hat{\theta},\quad s= (1,\ldots ,1,s_1)\in \hat{G}.
$$
L'automorphisme ${\rm Int}_{\tilde{s}}$ de $\hat{G}$ stabilise la paire de Borel $(\hat{B},\hat{T})$ sous--jacente à 
$\hat{\ES{E}}$, et $s$ appartient à $\hat{T}$. Posons
$$
\bar{a}_i(\phi)= a_1(\phi)\cdots a_i(\phi)\in Z(\hat{G}_1),\quad i=1,\ldots ,r-1,
$$
$$
\zeta=(1, \bar{a}_1(\phi), \ldots ,\bar{a}_{r-1}(\phi))\in Z(\hat{G}),
$$
et
$$
h=\zeta(h_1,\ldots ,h_1)\in \hat{G}. 
$$
Par définition, l'élément $a(\phi)=(a_1(\phi),\ldots ,a_r(\phi))$ de $Z(\hat{G})$ définit la classe de cohomologie $\bs{a}\in {\rm H}^1(W_F,Z(\hat{G}))$ correspondant à $\omega$, et on a
$$
a(\phi)\zeta= \hat{\theta}(\zeta)(1,\ldots , 1, \bar{a}(\phi)).
$$
Posons $\bs{h}=h\phi$. On a l'égalité dans $\hat{G}W_F\hat{\theta}= \hat{G}\hat{\theta}W_F$
$$
\tilde{s}\bs{h}= a(\phi)\bs{h}\tilde{s}.
$$

On a $\hat{G}_{\tilde{s}}= \{(x_1,\ldots ,x_1): x_1\in (\hat{G}_1)_{\tilde{s}_1}\}$. 
Notons $\ES{T}'$ le sous--groupe fermé de ${^LG}$ engendré par $\hat{G}_{\tilde{s}}$, par $I_F$ et par $(h,\phi)$. Pour $(x_1,\ldots ,x_1)\in \hat{G}_{\tilde{s}}$, on a
$$
(h,\phi)(x_1,\ldots ,x_1)(h,\phi)^{-1}= ((h_1,\phi)x_1(h_1,\phi)^{-1},\ldots , (h_1,\phi)x_1(h_1,\phi)^{-1}).
$$
Puisque $\ES{T}'_1$ est une extension scindée de $W_F$ par $(\hat{G}_1)_{\tilde{s}_1}$, le groupe 
$\ES{T}'$ est une extension scindée de $W_F$ par $\hat{G}_{\tilde{s}}$. De plus, $\ES{T}'$ ne dépend pas du choix du Frobenius $\phi$, ni du choix de l'élément $(h_1,\phi)\in \ES{T}'_1$. Posons $T'= T'_1$. Par construction, le triplet $\bs{T}'= (T',\ES{T}', \tilde{s})$ est un triplet endoscopique elliptique et non ramifié pour $(\wt{G},\omega)$, et l'élément $(h,\phi)\in \ES{T}'$ définit un isomorphisme $
\ES{T}'\buildrel \simeq \over{\longrightarrow} {^LT'}$.

\begin{monlem1}
La classe d'isomorphisme de la donnée $\bs{T}'$ ne dépend que de celle de la donnée $\bs{T}'_{\!1}$, et l'application
$$
\mathfrak{E}_{\rm t-nr}(\wt{G}_1,\overline{\omega})\rightarrow \mathfrak{E}_{\rm t-nr}(\wt{G},\omega),\, \bs{T}'_{\!1}\mapsto \bs{T}'
$$
ainsi définie est bijective. 
\end{monlem1}

\begin{proof}Soient $\bs{T}'_{\!1}= (T'_1,\ES{T}'_1,\tilde{s}'_1)$ et $\bs{T}''_{\!1}= (T''_1,\ES{T}''_1,\tilde{s}''_1)$ des données endoscopiques elliptiques et non ramifiés pour $(\wt{G}_1,\overline{\omega})$, telles que $T'_1$ et $T''_1$ sont des tores. Soient 
$\bs{T}'=(T',\ES{T}',\tilde{s}')$ et $\bs{T}''=(T'',\ES{T}'',\tilde{s}'')$ les données endoscopiques elliptiques et non ramifiés pour $(\wt{G},\omega)$ associées à 
$\bs{T}'_{\!}$ et $\bs{T}''_{\!}$ par la construction ci--dessus. 

Supposons tout d'abord que les données $\bs{T}'_{\!1}$ et $\bs{T}''_{\!1}$ sont isomorphes: il existe des éléments $x_1\in \hat{G}_1$ et 
$z_1\in Z(\hat{G}_1)$ tels que
$$
x_1\ES{T}'_1x_1^{-1}= \ES{T}''_1,\quad x_1\tilde{s}'_1x_1^{-1} = z_1\tilde{s}''_1.
$$
Soient $x= (x_1,\ldots ,x_1)\in \hat{G}$ et $z= (1,\ldots ,1,z_1)\in Z(\hat{G})$. Alors on a
$$
x\ES{T}'x^{-1} = \ES{T}'',\quad x \tilde{s}'x^{-1} = z \tilde{s}'',
$$
et les données $\bs{T}'$ et $\bs{T}''$ sont isomorphes. D'où la première assertion du lemme. 

Supposons maintenant que les données $\bs{T}'$ et $\bs{T}''$ sont isomorphes: il existe des éléments $x\in \hat{G}$ et $z\in Z(\hat{G})$ tels que
$$
x\ES{T}'x^{-1}= \ES{T}'',\quad x\tilde{s}' x^{-1} = z\tilde{s}''.
$$
D'après la première assertion du lemme, on peut supposer que $\tilde{s}'_1=s'_1\hat{\theta}_1$ et $\tilde{s}''_1= s''_1\hat{\theta}_1$ pour des éléments $s'_1,\, s''_1\in \hat{T}_1$. On a donc
$$
\tilde{s}'= s'\hat{\theta},\quad s'=(1,\ldots ,1,s'_1),
$$
et
$$
\tilde{s}''= s''\hat{\theta},\quad s''=(1,\ldots ,1,s''_1).
$$
Alors les éléments 
$x=(x_1,\ldots ,x_r)$ et $z=(z_1,\ldots ,z_r)$ vérifient:
\begin{itemize}
\item $x_ix_{i+1}^{-1} =z_i$, $i=1,\ldots ,r-1$,
\item $x_r s'_1 \hat{\theta}_1(x_1)^{-1}= z_r s''_1$.
\end{itemize}
En posant $\bar{z}=z_1\ldots z_r$, on a donc
$$
x_1 \tilde{s}'_1 x_1^{-1} = \bar{z} \tilde{s}''_1.
$$
Choisissons un élément $(h'_1,\phi)\in \ES{T}'_1$, et posons
$$
\bs{h}'= h'\phi, \quad h'= \zeta(h'_1,\ldots ,h'_1)\in \hat{G}.
$$
De même, choisissons un élément 
$(h''_1,\phi)\in \ES{T}''_1$, et posons
$$
\bs{h}''= h''\phi, \quad h''= \zeta(h''_1,\ldots ,h''_1)\in \hat{G}.
$$
Par construction, on a $(h',\phi)\in \ES{T}'$ et $(h'',\phi)\in \ES{T}''$, et
$$
x(h',\phi)x^{-1}= (yh'',\phi)
$$
pour un élément $y\in \hat{G}_{\tilde{s}''}$. 
\'Ecrivons $y=(y_1,\ldots ,y_1)$, $y_1= (\hat{G}_1)_{\tilde{s}''_1}$. Pour $i=1,\ldots ,r$, on a $x_i h'_1 \phi(x_i)^{-1} = y_1 h''_1$. En particulier on a $x_1\ES{T}'_1x_1^{-1} = \ES{T}''_1$, et les données $\bs{T}'_1$ et $\bs{T}''_1$ sont isomorphes. Cela prouve que l'application du lemme est injective.

Prouvons qu'elle est surjective. Soit $\bs{T}'=(T',\ES{T}',\tilde{s})\in \mathfrak{E}_{\rm t-nr}$. On peut supposer que $\tilde{s}=s\hat{\theta}$ pour un élément $s\in \hat{T}= \hat{T}_1^{\times q}$ de la forme $s=(1,\ldots ,1, s_1)$. Choisissons un élément $(h,\phi)\in \ES{T}'$, et écrivons $h=(h_1,\ldots ,h_q)$. Posons $\bs{h}=h\phi$. Puisque $\tilde{s}\bs{h}= a(\phi)\bs{h}\tilde{s}$, on a:
\begin{itemize}
\item $h_{i+1} = a_i(\phi)h_i$, $i=1,\ldots , r-1$,
\item $s_1\hat{\theta}_1(h_1)= a_r(\phi) h_r\phi(s_1)$. 
\end{itemize}
On a donc
$$
h=\zeta (h_1,\ldots ,h_1),\quad s_1\hat{\theta}_1(h_1)= \bar{a}(\phi) h_1\phi(s_1).
$$
Posons $\tilde{s}_1= s_1\hat{\theta}_1$, et notons $\ES{T}'_1$ le sous--groupe fermé de ${^LG_1}$ engendré par $(\hat{G}_1)_{\tilde{s}_1}$, $I_F$ et $(h_1,\phi)$. C'est une extension scindée de $W_F$ par $(\hat{G}_1)_{\tilde{s}_1}$, et le triplet $\bs{T}'_1=(T',\ES{T}'_1,\tilde{s}_1)$ est une donnée endoscopique elliptique et non ramifié pour $(\wt{G},\omega)$. Cette donnée s'envoie sur la classe d'isomorphisme de $\bs{T}'$ par l'application du lemme, qui est donc surjective.  
\end{proof}

Revenons à la situation d'avant le lemme 1. Du plongement 
$\hat{\xi}_1:\hat{T}'_1=(\hat{G}_1)_{\tilde{s}_1}\rightarrow \hat{T}_1$ se déduit par dualité un morphisme
$$
\xi_1: T_1\rightarrow T'_1\simeq T_1/(1-\theta_1)(T_1),
$$
qui vérifie
$$
\sigma(\xi_1)= \xi_1 \circ {\rm Int}_{\alpha_{T'_1}(\sigma)},\quad \sigma \in \Gamma_{F},
$$
où $\sigma\mapsto \alpha_{T'_1}(\sigma)$ est un cocycle de $\Gamma_F$ à valeurs dans $W_1^{\hat{\theta}_1}= W_1^{\theta_1}$, 
$W_1= W^{G_1}(T_1)$. On pose
$$
\wt{T}'_1= T'_1\times Z(\wt{G}_1, \ES{E}_1)= T'_1\theta'_1,\quad \theta'_1={\rm id}_{T'_1}.
$$
De même, du plongement $\hat{\xi}:\hat{T}'=\hat{G}_{\tilde{s}}\rightarrow \hat{T}$ se déduit par dualité un morphisme
$$
\xi: T\rightarrow T'\simeq T/(1-\theta)(T),
$$
qui vérifie
$$
\sigma(\xi)= \xi \circ {\rm Int}_{\alpha_{T'}(\sigma)}, \quad \alpha_{T'}(\sigma)= (\alpha_{T'_1}(\sigma), \ldots , \alpha_{T'_1}(\sigma)),\quad 
\sigma\in \Gamma_F. 
$$
On pose
$$
\wt{T}'= T'\times Z(\wt{G}, \ES{E})= T'\theta', \quad \theta'={\rm id}_{T'}.
$$
L'identité $T'\rightarrow T'_1$ se prolonge trivialement en un morphisme d'espaces tordus
$$\wt{T}'\rightarrow \wt{T}'_1, \, g'\theta' \mapsto g'\theta'_1.
$$ 
Le 
$F$--morphisme
$$
\jmath: G_1\rightarrow G,\,y\mapsto (1,\ldots , 1,y)
$$
induit par restriction et passage aux quotients un $F$--isomorphisme
$$
T_1/(1-\theta_1)(T_1)\buildrel \simeq \over{\longrightarrow} T/(1-\theta)(T),
$$
et le diagramme suivant
$$
\xymatrix{T_1\ar[d]_{\jmath}\ar[r]^{\xi_1} &T'_1\ar @{=} [d]\\
T\ar[r]_{\xi} & T'
}\leqno{(1)}
$$
est commutatif. Soit
$$\tilde{\mu}: \wt{T}'_1\times \wt{G}_1\rightarrow \wt{T}'\times \wt{G}$$ le $F$--morphisme défini par
$$
\tilde{\mu}(g'_1\theta'_1, g_1\theta_1)= (g'_1\theta'\!, \jmath(g_1)\theta).
$$
Il induit une application
$$
\tilde{\mu}_F: \wt{T}'_1(F)\times \wt{G}_1(F)\rightarrow \wt{T}'(F)\times \wt{G}(F).
$$
Les données $\bs{T}'_1$ et $\bs{T}'$ sont relevantes, et les choix effectués plus haut permettent de définir des facteurs de transfert normalisés
$$
\Delta_1: \ES{D}(\bs{T}'_1)\rightarrow {\Bbb C}^\times,\quad 
\Delta: \ES{D}(\bs{T}')\rightarrow {\Bbb C}^\times.
$$
On note $\ES{D}(\bs{T}'_1)/G_1(F)$ l'ensemble des orbites dans $\ES{D}(\bs{T}'_1)$ pour l'action de $G_1(F)$ par conjugaison sur le second facteur, et on définit $\ES{D}(\bs{T}')/G(F)$ de la même manière.

\begin{monlem2}
L'application $\tilde{\mu}_{F}$ induit par restriction une application
$$
\ES{D}(\bs{T}'_{\!1})\rightarrow \ES{D}(\bs{T}'),
$$
qui se quotiente en une application bijective
$$
\ES{D}(\bs{T}'_{\!1})/G_1(F)\rightarrow \ES{D}(\bs{T}')/G(F).
$$
De plus, pour $(\delta_1,\gamma_1)\in \ES{D}(\bs{T}'_{\!1})$ 
et $(\delta,\gamma)=\tilde{\mu}_{F}(\delta_1,\gamma_1)\in \ES{D}(\bs{T}')$, on a
$$
\Delta_1(\delta_1,\gamma_1) = \Delta(\delta,\gamma).
$$
\end{monlem2}

\begin{proof}Pour $x_1\in G_1$ et $x=\iota(x_1)\;(=(x_1,\ldots ,x_1))\in G$, on a
$$
\jmath (x_1^{-1} g_1 \theta_1(x_1))= x^{-1} \jmath (g_1) \theta(x).
$$
La commutativité du diagramme (1) entra\^{\i}ne que l'application $\tilde{\mu}_F$ envoie $\ES{D}(\bs{T}'_1)$ dans $\ES{D}(\bs{T}')$, et qu'elle se quotiente en une application $\ES{D}(\bs{T}'_1)/G_1(F)\rightarrow \ES{D}(\bs{T}')/G(F)$. Cette dernière est surjective car tout élément $\gamma =g\theta\in \wt{G}(F)$ est conjugué à $(1,\ldots ,1,\bar{g})\theta$ par un élément de $G(F)$, et elle est injective car pour $g_1,\, y_1\in G_1(F)$, les éléments $\jmath (g_1)\theta$ et $\jmath(y_1)\theta$ sont conjugués par un élément de $G(F)$ si et seulement si cet élément est dans $\iota(G_1(F))$.

Quant à l'égalité des facteurs de transfert, on procède terme par terme comme dans la preuve du lemme 3 de \ref{troisième réduction} (grâce à l'égalité (1) de \ref{les hypothèses (suite)}). Commen\c{c}ons par le terme $\Delta_{\rm II}$. L'égalité des termes $\Delta_{1,{\rm II}}(\delta_1,\gamma_1)$ et $\Delta_{\rm II}(\delta,\gamma)$ est immédiate: le premier (resp. le second) est un produit sur l'ensemble des orbites dans l'ensemble des racines pour $G_1$ (resp. pour $G$) pour le groupe engendré par $\theta_1$ (resp. par $\theta$) et par l'action galoisienne. Les deux ensembles d'orbites sont en bijection, l'orbite pour $G$ ayant simplement $r$ fois  plus d'éléments que l'orbite correspondante pour $G_1$, et les facteurs associés à des orbites correspondantes sont les mêmes. 

On doit ensuite montrer que
$$
\langle (V_{T_{1,0}},\nu_{1,{\rm ad}}),t_{T_{1,0,{\rm sc}}},s_{1,{\rm ad}})\rangle
 = \langle (V_{T_0},\nu_{\rm ad}),t_{T_0,{\rm sc}},s_{\rm ad})\rangle. \leqno{(1)}
$$ 
Le premier produit est pour l'accouplement
$$
{\rm H}^{1,0}(\Gamma_F;T_{1,0,{\rm sc}}\xrightarrow{1-\theta_1} T_{1,0,{\rm ad}})\times 
{\rm H}^{1,0}(W_F; \hat{T}_{1,0,{\rm sc}} \xrightarrow{1-\hat{\theta}_1} \hat{T}_{1,0,{\rm ad}})\rightarrow {\Bbb C}^\times,
$$
le second est pour l'accouplement
$$
{\rm H}^{1,0}(\Gamma_F;T_{0,{\rm sc}}\xrightarrow{1-\theta} T_{0,{\rm ad}})\times 
{\rm H}^{1,0}(W_F; \hat{T}_{0,{\rm sc}} \xrightarrow{1-\hat{\theta}} \hat{T}_{0,{\rm ad}})\rightarrow {\Bbb C}^\times.
$$
On écrit $\gamma=g_1\theta_1$ et $\gamma =\jmath(g_1)\theta$. On voit que $T_0= T_{1,0}^{\times r}$. On note $\iota_0:T_{1,0}\rightarrow T_0$ le plongement diagonal et $\jmath_0: T_{1,0}\rightarrow T_0$ le plongement $y\mapsto (1,\ldots ,1,y)$. On définit de la même manière les plongements $\iota_{0,{\rm sc}}: T_{1,0,{\rm sc}}\rightarrow T_{0,{\rm sc}}$ et $\jmath_{0,{\rm ad}}:
T_{1,0,{\rm ad}}\rightarrow T_{0,{\rm ad}}$. On vérifie que le diagramme suivant
$$
\xymatrix{
T_{1,0,{\rm sc}} \ar[r]^{1-\theta_1} \ar[d]_{\iota_{0,{\rm sc}}}& T_{1,0,{\rm ad}} \ar[d]^{\jmath_{0,{\rm ad}}}\\
T_{0,{\rm sc}} \ar[r]_{1-\theta} & T_{0,{\rm ad}}
}
$$
est commutatif. Dualement, on obtient le diagramme commutatif suivant
$$
\xymatrix{
\hat{T}_{0,{\rm sc}}\ar[r]^{1-\hat{\theta}} \ar[d]_{\hat{\jmath}_{0,{\rm ad}}} & \hat{T}_{0,{\rm ad}}\ar[d]^{\hat{\iota}_{0,{\rm sc}}} \\
\hat{T}_{1,0,{\rm sc}} \ar[r]_{1-\hat{\theta}_1} & \hat{T}_{1,0,{\rm ad}}
}
$$
où les homomorphismes $\hat{\jmath}_{0,{\rm ad}}: \hat{T}_{0,{\rm sc}}\rightarrow \hat{T}_{1,0,{\rm sc}}$ et $\hat{\iota}_{0,{\rm sc}}:\hat{T}_{0,{\rm ad}}\rightarrow \hat{T}_{1,0,{\rm ad}}$ sont donnés par
$$
\hat{\jmath}_{0,{\rm ad}}(x_1,\ldots ,x_r)=x_1,\quad \hat{\iota}_{0,{\rm sc}}(x_1,\ldots ,x_r)= x_1\cdots x_r.
$$
Le changement d'indice entre $\jmath_{0,{\rm ad}}$ et $\hat{\jmath}_{0,{\rm ad}}$ vient du fait que l'on a identifié la première composante $\hat{G}_1$ de $\hat{G}=\hat{G}_1^{\times r}$ au groupe dual de la $r$--ième composante de $G= G_1^{\times r}$. On considère ces diagrammes comme des homomorphismes de $2$--complexes de tores. Ils donnent naissance à des homomorphismes en dualité
$$
u: {\rm H}^{1,0}(\Gamma_F; T_{1,0,{\rm sc}} \xrightarrow{1-\theta_1} T_{1,0,{\rm ad}})
\rightarrow {\rm H}^{1,0}(\Gamma_F; T_{0,{\rm sc}} \xrightarrow{1-\theta} T_{0,{\rm ad}})
$$
et
$$
\hat{u}: {\rm H}^{1,0}(W_F; \hat{T}_{0,{\rm sc}} \xrightarrow{1-\hat{\theta}} \hat{T}_{0,{\rm ad}})
\rightarrow {\rm H}^{1,0}(W_F; \hat{T}_{1,0,{\rm sc}} \xrightarrow{1-\hat{\theta}_1} \hat{T}_{1,0,{\rm ad}}).
$$
Avec les objets déjà fixés, pour construire les termes $V_{T_0}$ et $\nu_{\rm ad}$ (resp. $V_{T_{1,0}}$ et $\nu_{1,{\rm ad}}$), on a seulement besoin de fixer un élément $x\in G_{\rm SC}$ (resp. $x_1\in G_{1,{\rm SC}}$), noté $g$ en \cite[I, 6.3]{MW}. Brièvement, cet élément conjugue $T_0$ en $T$ (resp. $T_{1,0}$ en $T_1$) de fa\c{c}on appropriée relativement aux actions galoisiennes. On voit que, $x_1$ étant choisi, on peut prendre $x= (x_1,\ldots ,x_1)$. Il résulte alors des définitions que $V_{T_0}= \iota_{0,{\rm sc}}\circ V_{T_{1,0}}$ tandis que $\nu_{\rm ad}= \jmath_{0,{\rm ad}}(\nu_{1,{\rm ad}})$. Du côté dual, puisque $\hat{G}= \hat{G}_{\rm SC}$, on peut prend les décompositions triviales $h=h_{\rm sc}$ et $h_1=h_{1,{\rm sc}}$. On a donc $h_{\rm sc}= (h_{1,{\rm sc}},\ldots ,h_{1,{\rm sc}})$. Le terme  $h_{\rm sc}$ (resp. $h_{1,{\rm sc}}$) entre dans la définition de $t_{T_{0,{\rm sc}}}$ (resp. $t_{T_{1,0,{\rm sc}}}$). Avec ces choix, on voit que $t_{T_{1,0,{\rm sc}}}= \hat{\jmath}_{0,{\rm ad}}\circ t_{T_{0,{\rm sc}}}$ tandis que $s_{1,{\rm ad}}= \hat{\iota}_{0,{\rm sc}}(s_{\rm ad})$. D'où l'égalité
$$
(t_{T_{1,0,{\rm sc}}},s_{1,{\rm ad}})= \hat{u}(t_{T_{0,{\rm sc}}},s_{\rm ad}).
$$
La compatibilité des produits entraîne alors l'égalité (1) cherchée. Cela achève la preuve. 
\end{proof}

Soit $(\delta_1,\gamma_1)\in \ES{D}(\bs{T}'_{\!1})$, et soit $(\delta,\gamma)\in \ES{D}(\bs{T}')$ l'élément $\tilde{\mu}_{F_1}(\delta_1,\gamma_1)$. On a $\bar{\gamma}= \gamma_1$, et d'après la remarque 1 de \ref{troisième réduction}, le plongement diagonal $\iota:G_1\rightarrow G$ induit par restriction un $F$--isomorphisme $(G_1)_{\gamma_1}\buildrel\simeq \over{\longrightarrow} G_\gamma$, noté $\iota_\gamma$. Via l'isomorphisme $\iota_{\gamma,F}: (G_1)_{\bar{\gamma}}(F)\buildrel \simeq \over{\longrightarrow} G_\gamma(F)$, les mesures de Haar sur $(G_1)_{\bar{\gamma}}(F)$ et $G_\gamma(F)$ normalisées par $T'_1(\mathfrak{o}_F)=T'(\mathfrak{o}_F)$ se correspondent, et d'après l'égalité (1) et la remarque 2 de \ref{troisième réduction}, pour $f\in C^\infty_{\rm c}(\wt{G}(F))$, 
on a l'égalité entre intégrales orbitales normalisées
$$
I^{\wt{G}}(\gamma, \omega, f)= I^{\wt{G}_1}(\bar{\gamma},\overline{\omega}, \bar{f}). 
$$
La démonstration du lemme 2 de \ref{troisième réduction} donne aussi
$$
d(\theta^*) = d(\theta_1^*).
$$
On a $\wt{K}'= K'\theta'$ et $\wt{K}'_1= K'_1\theta'_1$ avec $K'=K'_1\;(=T'_1(\mathfrak{o}_F))$, et pour $f= \bs{1}_{\wt{K}}$, on a vu en \ref{troisième réduction} que $\bar{f}= \bs{1}_{\wt{K}_1}$. On en déduit que pour prouver le lemme fondamental pour les données $\bs{T}'\in \mathfrak{E}_{\rm t-nr}(\wt{G},\omega)$ et la fonction $\bs{1}_{\wt{K}}$ (théorème de \ref{l'énoncé}), 
il suffit de le faire pour les données $\bs{T}'_{\!1}\in \mathfrak{E}_{\rm t-nr}(\wt{G}_1,\overline{\omega})$ et la fonction $\bs{1}_{\wt{K}_1}$. 

On peut donc supposer $r=1$. 

\subsection{Quatrième réduction}\label{quatrième réduction}
On suppose dans ce numéro que chacun des deux automorphismes $\theta$ et $\phi$ opère transitivement sur l'ensemble des composantes connexes de $\bs{\Delta}$. Soient $\bs{\Delta}_1,\ldots ,\bs{\Delta}_m$ ces composantes connexes, ordonnées de telle manière que $\phi(\bs{\Delta}_{i+1})= \bs{\Delta}_i$ pour $i=1,\ldots ,m-1$. Si l'on oublie $\theta$, on est dans la situation de \ref{deuxième réduction}: à la décomposition
$$
\bs{\Delta}=\bs{\Delta}_1\coprod \ldots \coprod \bs{\Delta}_m
$$
correspond une décomposition
$$
G= G_1\times \cdots \times G_m,
$$
qui est définie sur le sous--corps $F_1$ de $F^{\rm nr}$ formé des éléments fixés par $\phi_1=\phi^m$ (\cad la sous--extension de degré $m$ de $F^{\rm nr}/F$). Pour $i=1,\ldots ,m-1$, on identifie $G_{i+1}$ à $G_1$ via $\phi^i$. On a donc $G= {\rm Res}_{F_1/F}(G_1)$, et le plongement diagonal (défini sur $F_1$) $\iota: G_1\rightarrow G= G_1^{\times m}$ induit un isomorphisme de groupes topologiques
$$
\iota_{F_1}: G_1(F_1)\rightarrow G(F). 
$$
Rappelons que l'action de $\Gamma_F$ sur $G(\overline{F})= G_1(\overline{F})^{\times m}$ est donnée par:
\begin{itemize}
\item $\sigma(x_1,\ldots ,x_m)= (\sigma(x_1),\ldots ,\sigma(x_m))$ pour $\sigma\in \Gamma_{F_1}$,
\item $\phi(x_1,\ldots ,x_m)=(x_2,\ldots , x_m, \phi_1(x_1))$.
\end{itemize}
L'élément $\phi\in W_F$ donne par restriction un générateur du groupe de Galois ${\rm Gal}(F_1/F)$, lequel définit un $F$--automorphisme d'ordre $m$ du groupe $G={\rm Res}_{F_1/F}(G_1)$, disons $\alpha$.  
Puisque $\theta$ opère transitivement sur l'ensemble des composantes connexes de $\bs{\Delta}$, il existe un entier $e\in \{1,\ldots ,m-1\}$ premier à $m$ (si $m=1$, on prend $e=0$) et des automorphismes $\theta_1,\ldots , \theta_m$ de $G_1$ tels que
$$
\alpha^{-e}\theta(x) = (\theta_1(x_1), \cdots ,\theta_m(x_m)),\quad x=(x_1,\ldots ,x_m)\in G(\overline{F})=G_1(\overline{F})^{\times m}.
$$
Puisque $\alpha$ et $\theta$ commutent à $\phi$, on a $\theta_1=\cdots = \theta_m$ et $\theta_1$ commute à $\phi_1$, \cad que $\theta_1$ est défini sur $F$.

On suppose que l'ensemble $\mathfrak{E}_{\rm t-nr}(\wt{G},\omega)$ n'est pas vide. Alors d'après \cite[5.2]{LMW}, cela implique que $(G_1,\wt{G}_1)$ est isomorphe à l'espace tordu trivial $(PGL(n),PGL(n))$, défini et déployé sur $F_1$. On peut donc supposer que $G_1=PGL(n)$ et que $\theta_1$ est l'identité. Alors $\theta$ est le $F$--automorphisme de $G={\rm Res}_{F_1/F}(PGL(n))$ défini par un générateur du groupe de Galois ${\rm Gal}(F_1/F)$, et on est dans la situation du changement de base non ramifié (avec caractère $\omega$) pour $PGL(n)$. Si $m=1$, alors $\theta=1$, et dans ce cas on peut appliquer le résultat de Hales \cite{H}. Il reste donc à traiter le cas $m>1$.

\section{Réduction au résultat de Hales par la méthode de Kottwitz}\label{réduction au résultat de Hales}

\subsection{Réduction au cas du changement de base pour $GL(n)$}On s'est ramené en \ref{quatrième réduction} au cas du changement de base (avec caractère $\omega$) pour $PGL(n)$, \cad au groupe ${\rm Res}_{F_1/F}(PGL(n))$, où $F_1/F$ est une sous--extension de $F^{\rm nr}/F$ de degré $m>1$ muni du $F$--automorphisme $\theta$ défini par un générateur du groupe de Galois ${\rm Gal}(F_1/F)$. Puisque le centre $Z({\rm Res}_{F_1/F}(PGL(n)))$ est connexe, d'après le point (ii) de la proposition de \ref{le résultat}, on peut remplacer le groupe $PGL(n)$ par $GL(n)$ --- cf. les remarques 1 et 2 de \ref{le résultat}.

Changeons de notations. Posons $E=F_1$ et notons $H$ le groupe $GL(n)$, défini et déployé sur $F$, pour un entier $n\geq 1$. Posons $H_E= H\times_F E$, 
$G= {\rm Res}_{E/F}(H_E)$ et $\wt{G}= G\theta$, où $\theta$ est le $F$--automorphisme de $G$ défini par un générateur du groupe de Galois ${\rm Gal}(F_1/F)$. On a l'identification (définie sur $E$) $G=H^{\times m}$, et l'action de $\theta$ sur $G(\overline{F})=H(\overline{F})^{\times m}$ est donnée par
$$
\theta(x_1,\ldots , x_m)= (x_2, \ldots ,x_m , x_1), \quad (x_1,\ldots ,x_m)\in G(\overline{F}).
$$
Le plongement diagonal $H\rightarrow G$ est défini sur $E$, et il 
induit une identification
$$
H(E)= G(F).
$$
En particulier, l'action de $\theta$ sur $G(F)$ co\"{\i}ncide avec celle de de $\phi^e$ sur $H(E)$ 
pour un entier $e\in\{1,\ldots ,m-1\}$ premier à $m$. Le groupe de Galois $\Gamma_F$ est engendré par $\Gamma_{F_1}={\rm GL}(\overline{F}/F_1)$ et par $\phi^e$, et l'action de $\Gamma_F$ sur $G(\overline{F})=H(\overline{F})^{\times m}$ est donnée par:
\begin{itemize}
\item $\sigma(x_1,\ldots ,x_m)=(\sigma(x_1),\ldots ,\sigma(x_m))$, $\sigma\in \Gamma_{F_1}$; 
\item $\phi^e(x_1,\ldots ,x_m)= (x_2, \ldots , x_m, \phi^{em}(x_1))$.
\end{itemize}

Pour toute sous--extension finie $F'/F$ de $F^{\rm nr}/F$, posons $K_{F'}= GL(n,\mathfrak{o}_{F'})$ et 
notons $\omega_{F'}$ le caractère non ramifié $\omega\circ \det_{F'}$ de $H(F')=GL(n,F')$, où l'on a identifié $\omega$ à un caractère non ramifié de $E^\times$ via l'isomorphisme naturel $F^\times/U_F\buildrel \simeq \over{\longrightarrow} F'^\times/U_{F'}$. Ici 
$\mathfrak{o}_{F'}$ est l'anneau des entiers de $F'$, et $U_{F'}= \mathfrak{o}_{F'}^\times$ le groupe de ses éléments inversibles. 
La restriction de $\omega_{F'}$ à $H(F)=GL(n,F)$ co\"{i}ncide avec $\omega$, et $\omega_{F'}$ est l'unique caractère non ramifié de $H(F')$ prolongeant $\omega$. En particulier, tout caractère non ramifié de $G(F)= H(E)$ trivial sur $Z(G)^\theta(F)=Z(H;F)$ est de la forme $\overline{\omega}_E$ pour un caractère non ramifié $\overline{\omega}$ de $F^\times$ d'ordre $n$. 
Posons
$$
K=K_F \subset H(F),\quad \wt{K}_E = K_E\theta \subset \wt{G}(F).
$$

Soit $\ES{E}_H=(B_H,T_H,\{E_\alpha\}_{\alpha\in \Delta_H})$ une paire de Borel épinglée de $H$ définie sur $F$ telle que 
$K= K_{\ES{E}_H}$. Alors $\ES{E}= \ES{E}_H\times \cdots \times \ES{E}_H$ est une paire de Borel épinglée de $G=H^{\times m}$ qui est définie sur $F$ et $\theta$--stable, et l'on a $K_E = K_{\ES{E}}$. De plus la paire de Borel $(B,T)$ de $\ES{E}$ sous--jacente à $\ES{E}$ est donnée par $B={\rm Res}_{E/F}(B_H)$ et $T={\rm Res}_{E/F}(T_H)$.

On fixe aussi un caractère non ramifié $\omega$ de $F^\times$ d'ordre $n$, que l'on identifie au caractère $\omega \circ \det_F$ de $H(F)= GL(n,F)$. On veut montrer le lemme fondamental (théorème de \ref{l'énoncé}) pour une donnée $\bs{T}'\in \mathfrak{E}_{\rm t-nr}(\wt{G},\omega_E)$ et la fonction $\bs{1}_{\wt{K}_E}$.

\subsection{Une variante de la méthode de Kottwitz}\label{une variante de la méthode de Kottwitz}
Soit $L$ le complété de $F^{\rm nr}$. Posons
$$
K_L= GL(n,\mathfrak{o}_L)
$$
et notons $\omega_L$ l'unique caractère de $H(L)= GL(n,L)$ prolongeant $\omega$. Le groupe $K_L$ vérifie les conditions (a), (b), (c) de \cite[1]{K}:
\begin{enumerate}
\item[(a)] $\phi(K_L)=K_L$;
\item[(b)] l'application $k\mapsto k^{-1}\phi(k)$ de $K_L$ dans $K_L$ est surjective;
\item[(c)] l'application $k\mapsto k^{-1}\phi^m(k)$ de $K_L$ dans $K_L$ est surjective.
\end{enumerate}

Choisissons deux entiers non nuls $a$ et $b$ tels que $bm -ae =1$. Pour $h\in H(F)$ et $\gamma =g\theta\in \wt{G}(F)$, on écrit $h\leftrightarrow \gamma$ s'il existe un élément $c\in H(L)$ tel que les deux conditions suivantes soient vérifiées:
\begin{enumerate}
\item[(A)] $h^a= c^{-1}\phi^m(c)$;
\item[(B)] $h^b = c^{-1} g \phi^e(c)$.
\end{enumerate}

Notons $N: G(F)\rightarrow G(F)$ l'application définie par
$$
N(y)= y\theta(y)\cdots \theta^{m-1}(y).
$$
D'après \cite[1, theorem]{K}, la correspondance $h\leftrightarrow \gamma$ induit une bijection entre l'ensemble des classes de $H(F)$--conjugaison $\ES{O}_{H(F)}(h)$ d'éléments semisimples réguliers $h\in H(F)$ tels que $\ES{O}_{H(F)}(h)\cap K\neq \emptyset$ et l'ensemble des classes de $G(F)$--conjugaison $\ES{O}_{G(F)}(\gamma)$ d'éléments semisimples réguliers $\gamma\in \wt{G}(F)$ tels que $\ES{O}_{G(F)}(\gamma)\cap \wt{K}_E\neq \emptyset$. De plus si $h\leftrightarrow \gamma$, alors pour $c\in H(L)$ vérifiant les conditions (A) et (B) ci--dessus, on a
$$
N(g)= c h c^{-1},
$$
et l'application $x\mapsto cxc^{-1}$ induit un isomorphisme de $H_h(F)$ sur $G_\gamma(F)$. 
D'ailleurs (loc.~cit.), les conditions (A) et (B) sont équivalentes aux deux conditions suivantes:
\begin{enumerate}
\item[(C)] $N(g)=chc^{-1}$;
\item[(D)] $(g\phi^e)^{-a} \phi^{ea}= c\phi(c^{-1})$.
\end{enumerate}

\begin{marema}
{\rm 
En pratique, pour $h\in H(F)$ tel que $\ES{O}_{H(F)}(h)\cap K \neq \emptyset$, on choisit un élément $x\in H(F)$ tel que $h'=x^{-1}hx\in K$ et un élément $c'\in K_L$ tel que $h'^a = c'^{-1}\phi^m(c')$ --- c'est possible gr\^ace à la condition (c). Alors posant $c= c'x^{-1}$, on a $h^a = c^{-1}\phi^m(c)$, et on définit $\gamma =g\theta$ gr\^ace à la condition (B), \cad en posant $g=c h^b \phi^e(c^{-1})$. L'élément $g$ appartient à $H(E)$, et on a bien 
$h\leftrightarrow \gamma$. Dans l'autre sens, pour $\gamma =g\theta$ tel que $\ES{O}_{G(F)}(\gamma)\cap \wt{K}_E \neq \emptyset$, on choisit un élément $y\in H(E)$ tel que $\gamma'=y^{-1} \gamma y\in \wt{K}_E$, et posant $\gamma'=g'\theta$, on voit que l'élément $(g'\phi^e)^{-a}\phi^{ea}$ appartient à $K_E$. On choisit un élément $c'\in K_L$ tel que 
$(g'\phi^e)^{-a}\phi^{ea}= c'\phi(c'^{-1})$ --- c'est possible gr\^ace à la la condition (b). Alors posant $c= yc'$, on a 
$(g\phi^e)^{-a}\phi^{ea}= c \phi(c^{-1})$, et on définit $h$ gr\^ace à la condition (C), \cad en posant 
$h= c^{-1} N(g)c$. L'élément $h$ appartient à $H(F)$, et on a $h\leftrightarrow \gamma$.\hfill $\blacksquare$
}
\end{marema} 

En particulier, pour tout élément $h\in K$ semisimple régulier (dans $H$), il existe un élément $\gamma= g\theta\in \wt{K}_E$ 
semisimple régulier (dans $\wt{G})$ tel que $h\leftrightarrow \gamma$, et on peut choisir l'élément $c$ vérifiant les conditions (A) et (B) dans le sous--groupe borné maximal $K_L$ de $G(L)$. Fixons de tels éléments $h$, $\gamma$ et $c$. Pour $f\in C^\infty_{\rm c}(H(F))$, on définit comme suit l'intégrale orbitale $I^H(h,\omega,f)$: si $\omega\vert_{H_h(F)}\neq 1$, on pose $I^{H}(h,\omega,f)=0$; sinon, on 
pose
$$
I^{H}(h,\omega,f)= D^H(h)^{1/2}\int_{H_h(F)\backslash H(F)}\omega(x) f(x^{-1}h x) \textstyle{dx \over dx_h},
$$
où les mesures de Haar $dx$ sur $H(F)$ et $dx_h$ sur $H_h(F)$ sont celles qui donnent le volume $1$ à $K$ et au sous--groupe compact maximal de $H_h(F)$. De même, pour $f_E\in C^\infty_{\rm c}(\wt{G}(F))$, on définit comme suit l'intégrale orbitale $I^{\wt{G}}(\gamma,\omega_E, f_E)$: si $\omega_E\vert_{G_\gamma(F)}\neq 1$, on pose $I^{\wt{G}}(\gamma,\omega_E, f_E)= 0$; sinon, on pose
$$
I^{\wt{G}}(\gamma,\omega_E, f_E)= d(\theta^*)^{-1}D^{\wt{G}}(\gamma)^{1/2}\int_{G_\gamma(F)\backslash G(F)}\omega_E(y)f_E(y^{-1}\gamma y)\textstyle{dy\over dy_{\gamma}},
$$
où les mesures de Haar $dy$ sur $G(F)$ et $dy_\gamma$ sur $G_\gamma(F)$ sont celles qui donnent le volume $1$ à $K_E$ et au sous--groupe compact maximal de $G_\gamma(F)$. Le facteur $d(\theta^*)$, défini de la manière habituelle (\ref{l'énoncé}), vient de la normalisation des mesures pour le transfert tordu: la mesure de Haar sur $G_\gamma(F)$ correspondant à $dx_h$ pour le transfert tordu n'est pas $dy_\gamma$, mais $d(\theta^*)dy_\gamma$. En effet, le commutant $F'$ de $h$ dans $M(n,F)$ est une extension non ramifiée de degré $n$ de $F$, et l'on a $H_h(F)=F'^\times$. On a aussi $\gamma^m = chc^{-1}$ et $G_\gamma(F) cF'^\times c^{-1}$. D'autre part on a $T'(F)=T'_H(F)\simeq F'^\times$, mais pour définir le transfert de $\wt{T}'(F)$ à $\wt{G}(F)$, la mesure de Haar sur $G_\gamma(F)$ n'est pas celle donnée par l'isomorphisme ${\rm Int}_c: F'^\times \buildrel \simeq\over{\longrightarrow} G_\gamma(F)$, mais celle donnée par le morphisme naturel
$$
G_\gamma(F)=T_0^{\theta,\circ}\rightarrow (T_0/(1-\theta)(T_0))(F)\simeq T'(F),
$$
où $T_0$ est le centralisateur de $G_\gamma$ dans $G$ (notons que $T_0^{\theta,\circ}=G_\gamma$). D'où la constante $d(\theta^*)= \vert m\vert_F^n$, qui n'est autre que le Jacobien de l'homomorphisme $F'^\times \rightarrow F'^\times,\, x\mapsto x^m$.

Le caractère $\omega$ est trivial sur $H_h(F)$ si et seulement si le caractère $\omega_E$ est trivial sur $G_\gamma(F)=c H_h(F)c^{-1}$, et si tel est le cas, alors on a
$$
I^H(h,\omega, \bs{1}_K)= D^H(h)^{1/2}\sum_x \omega(x) {\rm vol}(H_h(F)\backslash H_h(F)xK),\leqno{(1)}
$$
où $x$ parcourt un ensemble de représentants des doubles classes $H_h(F)\backslash H(F) /K$ telles que $x^{-1}hx\in K$; et on a aussi
$$
I^{\wt{G}}(\gamma, \omega_E, \bs{1}_{\wt{K}_E})= d(\theta^*)^{-1}D^{\wt{G}}(\gamma)^{1/2}\sum_y \omega_E(y) {\rm vol}(G_\gamma(F)\backslash G_\gamma(F)y K_E),
\leqno{(2)}
$$
où $y$ parcourt un ensemble de représentants des doubles classes $G_\gamma(F)\backslash H(E) /K_E$ telles que $y^{-1}\gamma y\in \wt{K}_E$. Posons
$$
X= H(F)/K,\quad X_E= H(E)/K_E,\quad X_L= H(L)/K_L.
$$ Les inclusions $H(F)\subset H(E)\subset H(L)$ induisent des inclusions
$$
X\subset X_E \subset X_L.
$$
Les groupes $H(F)$, $H(E)$, $H(L)$  opèrent naturellement sur $X$, $X_E$, $X_L$. On a aussi une action 
de $\phi$ sur $X_L$, et une action de $\theta$ sur $X_E$. Pour $\bar{x}=xK\in X$, on a $x^{-1}hx\in K$ si et seulement si $h\bar{x}=\bar{x}$, et si tel est le cas, alors on a
$$
{\rm vol}(H_h(F)\backslash H_h(F)xK)={\rm vol}({\rm Stab}_{H_h(F)}(\bar{x}))^{-1},
$$
où ${\rm Stab}_{H_h(h)}(\bar{x})$ est le stabilisateur de $\bar{x}$ dans $H_h(F)$. On obtient
$$
I^H(h,\omega, \bs{1}_K)= D^H(h)^{1/2}\sum_{\bar{x}} \omega(\bar{x}) {\rm vol}({\rm Stab}_{H_h(F)}(\bar{x}))^{-1},\leqno{(3)}
$$
où $\bar{x}$ parcourt un ensemble de représentants des orbites de $H_h(F)$ dans
$$
X^h=\{\bar{x}: h\bar{x}=\bar{x}\}.
$$
Par abus d'écriture, on a posé $\omega(\bar{x})= \omega(x)$, ce qui a un sens puisque le caractère $\omega$ est trivial sur $K$. De la même manière on obtient
$$
I^{\wt{G}}(\gamma, \omega_E, \bs{1}_{\wt{K}_E})= d(\theta^*)^{-1}D^{\wt{G}}(\gamma)^{1/2}\sum_{\bar{y}} \omega_E(\bar{y}) {\rm vol}({\rm Stab}_{G_\gamma(F)}(\bar{y}))^{-1},
\leqno{(4)}
$$
où $\bar{y}$ parcourt un ensemble de représentants des orbites de $G_\gamma(F)$ dans
$$
X_E^\gamma= \{\bar{y}\in X_E: \gamma \bar{y}= \bar{y}\}.
$$
Rappelons que $h\leftrightarrow \gamma$ avec $h\in K$ et $\gamma \in \wt{K}_E$, et qu'on a fixé $c\in K_L$ vérifiant les conditions (A) et (B). D'après \cite[p. 241]{K}, l'application $X_L\rightarrow X_L,\, \bar{x}\mapsto c\bar{x}$ induit une bijection de $X^h$ sur $X_E^\gamma$, et l'application $H(L)\rightarrow H(L),\,x\mapsto cxc^{-1}$ induit un isomorphisme de $H_h(F)$ sur $G_\gamma(F)$. Pour $\bar{x}\in X^h$, on a donc
$$
{\rm vol}({\rm Stab}_{G_\gamma(F)}(c\bar{x}))= {\rm vol}({\rm Stab}_{H_h(F)}(\bar{x}))
$$
et
$$
\omega_E(c\bar{x})= \omega_L(c) \omega_E(\bar{x})= \omega_E(\bar{x}).
$$

\begin{monlem}
On a
$$
D^{\wt{G}}(\gamma) = d(\theta^*) D^H(h).
$$
\end{monlem}

\begin{proof}
Quitte à conjuguer $\gamma$ par un élément de $G(F)$ et $h$ par un élément de $H(F)$, on peut supposer que $\gamma=(1,\ldots ,1,h)\theta$. Notons $\mathfrak{g}$, $\mathfrak{h}$, $\mathfrak{g}_\gamma$, $\mathfrak{h}_h$ les algèbres de Lie de $G$, $H$, 
$G_\gamma$ et $H_h$. Notons aussi $\mathfrak{g}_h$ l'algèbre de Lie de $G_h$, où $h\in H(F)$ est identifié à l'élément $(h,\ldots ,h)$ de $G(F)$. L'algèbre $\mathfrak{g}_\gamma$ s'identifie à $\mathfrak{h}_h$ et à la sous--algèbre diagonale de $\mathfrak{g}_{h} = \mathfrak{h}_h \times \cdots \times \mathfrak{h}_h$. Ces identifications sont définies sur $F$. On a donc
$$
D^{\wt{G}}(\gamma)= \vert \det(1-{\rm ad}_\gamma); \mathfrak{g}(F)/\mathfrak{g}_h(F))\vert_F \vert \det(1-\theta; \mathfrak{g}_h(F) /\mathfrak{h}_h(F))\vert_F.
$$
Le lemme 2 de \ref{troisième réduction} donne
$$
\vert \det(1-{\rm ad}_\gamma); \mathfrak{g}(F)/\mathfrak{g}_h(F))\vert_F= D^H(h).
$$
D'autre part on a
$$
\det(1-\theta; \mathfrak{g}_h(F) / \mathfrak{h}_h(F))= m^{\dim(\mathfrak{h}_h)}, \quad \dim(\mathfrak{h}_h)=n.
$$
Rappelons que la constante $d(\theta^*)$ est donnée par $
d(\theta^*) = \vert\det (1-\theta; \mathfrak{t}(F)/\mathfrak{t}_H(F))\vert_F$
où $\mathfrak{t}$ et $\mathfrak{t}_H$ sont les algèbres de Lie de $T$ et $T_H\;(= T_\theta)$. On a donc aussi
$$
d(\theta^*) = \vert m\vert_F^n.\leqno{(5)}
$$
D'où le lemme.
\end{proof}

En définitive, on obtient l'égalité
$$
I^{\wt{G}}(\gamma,\omega_E,{\bs 1}_{\wt{K}_E})= d(\theta^*)^{-1/2} I^H(h,\omega,{\bs 1}_K).\leqno{(6)}
$$

\subsection{Conclusion}\label{conclusion}Le groupe $H=GL(n)$ est défini et déployé sur $F$, donc ${^LH}= \hat{H}\times W_F$ avec $\hat{H}= GL(n,{\Bbb C})$. Comme en \ref{troisième réduction (suite)}, on munit le groupe dual $\hat{G}= \hat{H}^{\times m}$ de l'action de $\hat{\theta}$ donnée par $\hat{\theta}(x_1,\ldots ,x_m)=(x_2,\ldots ,x_m,x_1)$. Puisque l'action de $\theta$ sur $G(F)$ co\"{\i}ncide avec celle de $\phi^e$ sur $H(E)$, et que $bm -ae =1$, l'action $\phi_G$ de $\phi$ sur $\hat{G}$ vérifie
$$
\phi_G^e= \hat{\theta},\quad \hat{\theta}^{-a}=\phi_G.
$$
Soit $\hat{\ES{E}}_H=(\hat{B}_H, \hat{T}_H, \{\hat{E}_\alpha\}_{\alpha\in \Delta_H})$ une paire de Borel épinglée de $\hat{H}$. La paire de Borel épinglée $\hat{\ES{E}}= \hat{\ES{E}}_H \times \cdots \times \hat{\ES{E}}_H$ de $\hat{G}$ est stable sous l'action de $\hat{\theta}$, donc aussi sous celle de $\phi_G$. Soit $\bs{T}'_{\!H} = (T'_H, \ES{T}'_H, s_H)$ une donnée endoscopique elliptique et non ramifiée pour $(H, \omega)$ telle que $T'_H$ est un tore. On suppose que $s_H\in \hat{T}_H$. Choisissons un élément $(h_H,\phi)\in \ES{T}'_H$. Il définit un isomorphisme $\ES{T}'_H\buildrel \simeq\over{\longrightarrow} {^LT'_H}$ comme en \ref{les hypothèses}.(1). Puisque $\phi$ opère trivialement sur $\hat{H}$, on a l'égalité dans $\hat{H}W_F$
$$
s_H h_H = a(\phi)h_H s_H,
$$
où $a(\phi)$ est l'élément de $Z(\hat{H})\simeq {\Bbb C}^\times$ définissant la classe de cohomologie non ramifiée $\bs{a}\in {\rm H}^1(W_F, Z(\hat{H}))={\rm Hom}(W_F,Z(\hat{H}))$ correspondant au caractère $\omega$ de $H(F)$. \`A cette donnée $\bs{T}'_{\!H}$ pour $(H,\omega)$ est associée comme suit une donnée endoscopique elliptique et non ramifiée $\bs{T}'=(T',\ES{T}', \tilde{s})$ pour $(\wt{G},\omega_E)$ telle que $T'$ est un tore. Posons $s=(1,\ldots ,1,s_H)\in \hat{T}$ et $\tilde{s}= s\hat{\theta}$. Alors le plongement diagonal $\eta:\hat{H}\rightarrow \hat{G}$ induit un isomorphisme de $\hat{H}_{s_H}= \hat{T}'_H$ sur $\hat{G}_{\tilde{s}}$. Posons
$$
h= (\underbrace{h_H,\ldots ,h_H}_{a}, \underbrace{h_Hs_H,\ldots ,h_Hs_H}_{m-a})\in \hat{G},
$$
où l'entier $a$ est pris modulo $m$.
L'élément
$$
a_E(\phi)=(1,\ldots ,1,a(\phi))\in Z(\hat{G})
$$
définit une classe de cohomologie non ramifiée $\bs{a}_E\in {\rm H}^1(W_F, Z(\hat{G}))$ correspondant au caractère $\omega_E$ de $H(E)=G(F)$. Posons $\bs{h}=h\phi$. On a l'égalité dans $\hat{G}W_F \hat{\theta}= \hat{G}\hat{\theta}W_F$
$$
\tilde{s}\bs{h}=a_E(\phi)\bs{h}\tilde{s}.
$$
Notons $\ES{T}'$ le sous--groupe de ${^LG}$ engendré par $\hat{G}_{\tilde{s}}$, par $I_F$ et par $(h,\phi)$. Pour $x\in \hat{H}_{s_H}$, on a
$$
(h,\phi) \eta(x)(h,\phi)^{-1}= h \eta(x) h^{-1}= \eta(h_H xh_H^{-1})=\eta((h_H,\phi)x(h_H,\phi)^{-1}).
$$
Puisque $\ES{T}'_H$ est une extension scindée de $W_F$ par $\hat{H}_{s_H}$, le groupe $\ES{T}'$ est une extension scindée de $W_F$ par $\hat{G}_{\tilde{s}}$. De plus, $\ES{T}'$ ne dépend pas du choix du Frobenius $\phi$, ni du choix de l'élément $(h_H,\phi)\in \ES{T}'_H$.  Posons $T'= T'_H$. Par construction, le triplet $\bs{T}'=(T',\ES{T}',\tilde{s})$ est une donnée endoscopique elliptique et non ramifiée pour $(\wt{G},\omega_E)$, et l'élément $(h,\phi)\in \ES{T}'$ définit un isomorphisme $\ES{T}'\buildrel\simeq\over{\longrightarrow}{^LT'}$ comme en \ref{les hypothèses}.(1).

\begin{monlem}
La classe d'isomorphisme de la donnée $\bs{T}'$ ne dépend que de celle de la donnée $\bs{T}'_{\!H}$, et l'application
$$
\mathfrak{E}_{\rm t-nr}(H,\omega)\rightarrow \mathfrak{E}_{\rm t-nr}(\wt{G},\omega_E)
$$
ainsi définie est bijective.
\end{monlem}

\begin{proof}Soient $\bs{T}'_{\!1,H}= (T'_{1,H},\ES{T}'_{1,H},s_{1,H})$ et $\bs{T}'_{\!2,H}=  (T'_{2,H},\ES{T}'_{2,H},s_{2,H})$ des données endoscopiques elliptiques et non ramifiées pour $(H,\omega)$ telles que $T'_{1,H}$ et $T'_{2,H}$ sont des tores. On suppose que $s_{1,H}$ et $s_{2,H}$ appartiennent à $\hat{T}_H$. Soient $\bs{T}'_{\!1}=(T'_1,\ES{T}'_1,\tilde{s}_1)$ et $\bs{T}'_{\!2}=(T'_2,\ES{T}'_2,\tilde{s}_2)$ les données endoscopiques elliptiques et non ramifiées pour $(\wt{G},\omega_E)$ associées à $\bs{T}'_{\!1,H}$ et $\bs{T}'_{\!2,H}$ par la construction ci--dessus.

Supposons tout d'abord que les données $\bs{T}'_{\!1,H}$ et $\bs{T}'_{\!2,H}$ sont isomorphes: il existe des éléments $x_H\in \hat{H}$ et $z_H\in Z(\hat{H})$ tels que
$$
x_H\ES{T}'_{\!1,H}x_H^{-1}= \ES{T}'_{\!2,H}, \quad x_H s_{1,H} x_H^{-1} = z_H s_{2,H}.
$$
Puisque $\hat{H}_{s_{i,H}}= \hat{T}_H$ ($i=1,\,2$), l'élément $x_H$ appartient au normalisateur $N_{\smash{\hat{H}}}(\hat{T}_H)$ de $\hat{T}_H$ dans $\hat{H}$. 
Rappelons que pour construire $\bs{T}'_{\!1}$ et $\bs{T}'_{\!2}$, on a choisi des éléments $(h_{1,H},\phi)\in \ES{T}'_{\!1,H}$ et $(h_{2,H},\phi)\in \ES{T}'_{\!2,H}$ et on a posé
$$
h_i= (\underbrace{h_{i,H},\ldots ,h_{i,H}}_{a}, \underbrace{h_{i,H}s_{i,H},\ldots ,h_{i,H}s_{i,H}}_{m-a})\in \hat{G},\quad i=1,2.
$$
On a $\phi_H(x_H)=x_H$, et l'élément $h'_{2,H}=x_H h_{1,H} x_H^{-1}$ appartient à $\hat{H}_{s_{2,H}}h_{2,H}=\hat{T}_Hh_{2,H}$. Posons $t_H=h'_{2,H}h_{2,H}^{-1}\in \hat{T}_H$. On cherche un élément $x\in \hat{G}$ tel que $xh_1 \phi_G(x)^{-1} \in \hat{G}_{\tilde{s}_2}h_2$. Plus précisément, on cherche $x$ 
de la forme $x=\zeta \eta(x_H)$ avec $\zeta=(\zeta_1,\ldots ,\zeta_m)\in Z(\hat{G})$ tel que 
$x h_1 \phi_G(x)^{-1} = \eta(\zeta_Ht_H) h_2$ pour un élément $\zeta_H\in Z(\hat{H})$. On a $
\phi_G(x)^{-1}= \eta(x_H^{-1})\phi_G(\zeta^{-1})$ 
d'où
\begin{eqnarray*}
xh_1\phi_G(x)^{-1} &=& \zeta \phi_G(\zeta^{-1})(\underbrace{h'_{2,H},\ldots ,h'_{2,H}}_{a}, \underbrace{z_Hh'_{2,H}s_{2,H},\ldots ,z_Hh'_{2,H}s_{2,H}}_{m-a})\\
&=& \zeta \phi_G(\zeta^{-1})(\underbrace{\zeta_H^{-1},\ldots ,\zeta_H^{-1}}_{a}, \underbrace{\zeta_H^{-1}z_H,\ldots ,\zeta_H^{-1}z_H}_{m-a})\eta(\zeta_Ht_H)h_2.
\end{eqnarray*}
On est donc ramené à résoudre l'équation
$$
\zeta \phi_G(\zeta^{-1})(\underbrace{\zeta_H^{-1},\ldots ,\zeta_H^{-1}}_{a}, \underbrace{\zeta_H^{-1}z_H,\ldots ,\zeta_H^{-1}z_H}_{m-a})=1.
$$
On a
$$
\zeta \phi_G(\zeta^{-1})= (\zeta_1\zeta_{m-a+1}^{-1},\ldots ,\zeta_a\zeta_m^{-1}, \zeta_{a+1}\zeta_1^{-1},\ldots , \zeta_m \zeta_{m-a}^{-1}). 
$$
Il s'agit de résoudre le système de $m$ équations à $m$ inconnues:
\begin{itemize}
\item $\zeta_{m-a+i} = \zeta_H\zeta_i$ pour $i=1,\ldots ,a$;
\item $\zeta_{a+j}= \zeta_H^{-1} z_H\zeta_j$ pour $j=1,\ldots ,m-a$.
\end{itemize}
La matrice $m\times m$ associée à ce système d'équations est la matrice de Sylvester des polynômes $P,\, Q\in {\Bbb C}[X]$ donnés par $P(X)= \zeta_H X^{m-a} -1$ et $Q(X)= \zeta_H^{-1}z_H X^a -1$, avec l'identification naturelle $Z(\hat{H})={\Bbb C}^\times$. Le déterminant de cette matrice est le résultant $R(P,Q)$ de $P$ et $Q$, qui se calcule comme suit. On choisit $\alpha,\, \beta\in {\Bbb C}^\times$ tels que $\alpha^{m-a}= \zeta_H^{-1}$ et $\beta^a = \zeta_H z_H^{-1}$. Pour $k\geq 1$, on note $\mu_k\subset {\Bbb C}^\times$ le sous--groupe des racines $k$--ièmes de l'unité. Alors on a
$$
R(P,Q)= \zeta_H^{a} (\zeta_H^{-1} z_H)^{m-a} \prod_{\xi \in \mu_{m-a}}\prod_{\xi'\in \mu_a} (\xi \alpha - \xi'\beta).
$$
On voit que pour que $R(P,Q)=0$, il faut et il suffit que $\zeta_H^m = z_H^{m-a}$. On choisit $\zeta_H$ de sorte que cette égalité soit vérifiée. Alors on peut prendre $\alpha=\beta$. Notons que puisque $a$ et $m$ sont premiers entre eux, $a$ et $m-a$ le sont aussi, et la matrice de Sylvester est de rang $m-1$. Les solutions du système de $m$ équations à $m$ inconnues à résoudre forment un espace vectoriel de dimension $1$, et tout élément non nul $\zeta$ dans cet espace appartient à $Z(\hat{G})= ({\Bbb C}^\times)^{\times m}$. Pour un tel $\zeta$, on a donc $x \ES{T}'_1 x^{-1} = \ES{T}'_2$ (rappelons que $x= \zeta \eta(x_H)$). Comme $\eta(x_H)s_1 \eta(x_H)^{-1} =(1,\ldots ,1 , z_H s_{2,H})$, on a aussi 
$$
x\tilde{s}_1 x^{-1}=  \zeta \hat{\theta}(\zeta)^{-1} \eta(x_H) s_1 \eta(x_H)^{-1}\hat{\theta}\in Z(\hat{G})\tilde{s}_2.
$$
Les données $\bs{T}'_{\!1}$ et $\bs{T}'_{\!2}$ sont donc isomorphes.

Supposons maintenant que les données $\bs{T}'_{\!1}$ et $\bs{T}'_{\!2}$ sont isomorphes: il existe des éléments $x\in \hat{G}$ et $z\in Z(\hat{G})$ tels que
$$
x\ES{T}'_1x^{-1} = \ES{T}'_2,\quad x\tilde{s}_1 x^{-1} = z \tilde{s}_2.
$$
Posons $x=(x_1,\ldots ,x_m)$ et $z=(z_1,\ldots ,z_m)$. 
Comme $\tilde{s}_i = (1,\ldots , 1, s_{i,H})\hat{\theta}$ pour $i=1,\,2$, on obtient:
\begin{itemize}
\item $x_ix_{i+1}^{-1}=z_i$, $i=1,\ldots ,m-1$,
\item $x_m s_{1,H} x_1^{-1}= z_m s_{2,H}$.
\end{itemize}
En posant $\bar{z}=z_1\cdots z_m$ et $x_H=x_1$, on a donc
$$
x_H s_{1,H} x_H^{-1}= \bar{z} s_{2,H}.
$$
En posant $\zeta = (1,z_1^{-1},\ldots ,z_{m-1}^{-1})\in Z(\hat{G})$, on a aussi $x= \zeta \eta(x_H)$ et
$$
x h_1\phi_G(x)^{-1}=
\zeta \phi_G(\zeta)^{-1}\eta(x_H)h_1 \eta(x_H)^{-1}\in \eta(\hat{T}_H)h_2.
$$
En regardant la première composante de l'expression ci--dessus, on obtient
$$
z_{m-a} x_H h_{1,H}x_H^{-1} \in \hat{T}_H h_{2,H},
$$
d'où l'égalité
$$
x_H \ES{T}'_{1,H}x_H^{-1} = \ES{T}'_{2,H}.
$$
Les données $\bs{T}'_{\!1,H}$ et $\bs{T}'_{\!2,H}$ sont donc isomorphes. Cela prouve que l'application du lemme est injective.

Prouvons qu'elle est surjective. Soit $\bs{T}'=(T',\ES{T}',\tilde{s})\in \mathfrak{E}_{\rm t-nr}(\wt{G},\omega_E)$. On peut supposer que $\tilde{s}= s\hat{\theta}$ pour un élément $s\in \hat{T}=\hat{T}_H^{\times r}$ de la forme $s=(1,\ldots ,1,s_H)$. Choisissons un élément $(h,\phi)\in \ES{T}'$, et écrivons $h=(h_1,\ldots ,h_m)$. Posons $\bs{h}= h\phi$. Puisque $\tilde{s} \bs{h}= a_E(\phi) \bs{h}\tilde{s}$, on a:
\begin{itemize}
\item $h_{i+1}=h_i$, $i\notin \{a,m\}$,
\item $h_{a+1} = h_a s_H$,
\item $s_H h_1 = a(\phi) h_m$
\end{itemize}
En posant $h_H=h_1$, on a donc
$h= (h_{H},\ldots ,h_{H}, h_{H}s_{H},\ldots ,h_{H}s_{H})$
et $s_H h_H = a(\phi)h_H s_H$. 
Notons $\ES{T}'_H$ le sous--groupe de ${^LH}$ engendré par $\hat{T}_H = \hat{H}_{s_H}$, par $I_F$ et par $(h_H,\phi)$. C'est une extension scindée de $W_F$ par $\hat{H}_{s_H}$, et le triplet $(T', \ES{T}'_H, s_H)$ est une donnée endoscopique elliptique et non ramifiée pour $(H,\omega)$. Cette donnée s'envoie sur la classe d'isomorphisme de $\bs{T}'$ par l'application du lemme, qui est donc surjective. 
\end{proof}

\begin{marema1}
{\rm Pour prouver que l'application du lemme est bien définie, \cad que si l'on part de deux données endoscopiques (elliptiques, non ramifiées, ayant pour groupe sous--jacent un tore) pour $(H,\omega)$ qui sont isomorphes, alors les données endoscopiques pour $(\wt{G},\omega_E)$ que l'on en déduit sont elles aussi isomorphes, on a utilisé le fait que le centre $Z(\hat{H})$ est un tore complexe, et plus précisément que c'est un groupe divisible. Le lemme serait faux si l'on travaillait avec $H=PGL(n)$ au lieu de $H=GL(n)$.\hfill$\blacksquare$
}
\end{marema1}

Revenons à la situation d'avant le lemme. Du plongement $\hat{\xi}_H: \hat{T}'_H = \hat{H}_{s_H}\rightarrow \hat{T}_H$ se déduit par dualité un morphisme
$$
\xi_H: T_H\rightarrow T'_H,
$$
qui vérifie
$$
\sigma(\xi_H)= \xi_H \circ {\rm Int}_{\alpha_{T'_H}(\sigma)},\quad \sigma\in \Gamma_F,
$$
où $\sigma \mapsto \alpha_{T'_H}(\sigma)$ est un cocycle de $\Gamma_F$ à valeurs dans $W_H= W^H(T_H)$. De même, du plongement $\hat{\xi}: \hat{T}' = \hat{G}_{\tilde{s}}\rightarrow \hat{T}$ se déduit par dualité un morphisme
$$
\xi: T\rightarrow T'\simeq T/(1-\theta)(T),
$$
qui vérifie
$$
\sigma(\xi)= \xi \circ {\rm Int}_{\alpha_{T'}(\sigma)},\quad \sigma\in \Gamma_F,
$$
où $\sigma \mapsto \alpha_{T'}(\sigma)= (\alpha_{T'_H}(\sigma),\ldots , \alpha_{T'_H(\sigma)})$ est un cocycle de $\Gamma_F$ à valeurs dans $W^{\hat{\theta}}=W^\theta$, 
$W= W^G(T)$. 
Pour $t\in T(\overline{F})$, l'élément $t\theta(t)\cdots \theta^{m-1}(t)\in T(\overline{F})$
est de la forme $(t_H,\ldots , t_H)$ pour un élément $t_H\in T_H(\overline{F})$. L'application $T\rightarrow T_H, \, t\mapsto t_H$ 
est un $F$--morphisme, que l'on note $\nu$. Bien sûr $\nu_F: T(F)=T_H(E)\rightarrow T_H(F)$ n'est autre que la restriction de l'application $N:G(F)\rightarrow G(F)$ à $T(F)$. Le diagramme suivant
$$
\xymatrix{
T\ar[d]_{\nu} \ar[r]^{\xi} &T'\ar@{=}[d]\\
T_H\ar[r]_{\xi_H} & T'_H
}\leqno{(1)}
$$
est commutatif.

La restriction $\xi_Z: Z(G)\rightarrow T'$ de $\xi$ à $Z(G)$ est un morphisme défini sur $F$, et on pose
$$
\wt{T}'= T'\times_{Z(G)}Z(\wt{G},\ES{E}).
$$
On identifie $\wt{T}'$ à $T'\theta'$ où $\theta'$ est l'image de $\theta\in Z(\wt{G},\ES{E})$ dans $\wt{T}'$. On pose
$$
K'=T'(\mathfrak{o}),\quad \wt{K}'=K'\theta'.
$$

Les données $\bs{T}'_{\!H}$ et $\bs{T}'$ sont relevantes, et les choix effectués plus haut permettent de définir des facteurs de transfert normalisés
$$
\Delta_H: \ES{D}(\bs{T}'_{\!H})\rightarrow {\Bbb C}^\times,\quad \Delta: \ES{D}(\bs{T}')\rightarrow {\Bbb C}^\times.
$$

D'après \cite[chap.~1, lemma 1.1]{AC}, pour $\gamma =g\theta\in \wt{G}(F)$, l'élément $\gamma^m = N(g)$ est conjugué dans $G(F)=H(E)$ à un élément de $H(F)$. La classe de $H(F)$--conjugaison de cet élément est bien définie, et on la note $\ES{N}(\gamma)$. Elle ne dépend que de la classe de $G(F)$--conjugaison de $\gamma$, et pour $\gamma,\, \bar{\gamma}\in \wt{G}(F)$, on a $\ES{N}(\gamma)= \ES{N}(\bar{\gamma})$ si et seulement si $\bar{\gamma}= x^{-1}\gamma x$ pour un $x\in G(F)$. On note $(\wt{T}'(F)\times \wt{G}(F))/G(F)$ l'ensemble des orbites dans $\wt{T}'(F)\times \wt{G}(F)$ pour l'action de $G(F)$ par conjugaison sur le second facteur, et on définit $(T'_H(F)\times H(F))/H(F)$ de la même manière. Soit
$$
\bs{\ES{N}}: (\wt{T}'(F)\times \wt{G}(F))/G(F)\rightarrow (T'_H(F)\times H(F))/H(F)
$$
l'application définie par $\bs{\ES{N}}(t'\theta',\gamma)=(t',\ES{N}(\gamma))$. 

Un élément $\gamma\in \wt{G}(F)$ est semisimple régulier (dans $\wt{G})$ si et seulement la classe de $H(F)$--conjugaison 
$\ES{N}(\gamma)$ est semisimple régulière (dans $H$), et d'après la commutativité du diagramme (1), l'application $\bs{\ES{N}}$ 
se restreint en une application injective
$$
\bs{\ES{N}}_{\ES{D}}:\ES{D}(\bs{T}')/G(F)\rightarrow \ES{D}(\bs{T}'_{\!H})/H(F).
$$ 
\begin{marema2}
{\rm 
Pour $(t',h)\in \ES{D}(\bs{T}'_{\!H})$, le centralisateur $H_h=H^h$ est un tore maximal de $H$, elliptique et non ramifié. Par suite
$H_h(F)\simeq F'^\times$, où $F'/F$ est la sous--extension de degré $n$ de $F^{\rm nr}/F$, et $\det(h)\in N_{F'/F}(F'^\times)$. D'après \cite[chap.~1, lemma 1.4]{AC}, il existe un élément semisimple régulier $\gamma\in \wt{G}(F)$ tel que $\ES{N}(\gamma)=h$ si et seulement si $\det(h)\in N_{E/F}(E^\times)$, auquel cas le couple $(t'\theta',\gamma)$ appartient à $\ES{D}(\bs{T}')$. On en déduit que l'application 
$\bs{\ES{N}}_{\ES{D}}$ est surjective si et seulement si $m$ divise $n$.
\hfill$\blacksquare$
}
\end{marema2}

Pour comparer les facteurs de transferts $\Delta_H$ et $\Delta$, on ne peut pas utiliser brutalement l'application $\bs{\ES{N}}_{\ES{D}}$, car ceux--ci ne sont pas invariants par conjugaison en leur deuxième variable: ils se transforment selon les caractères $\omega$ et 
$\omega_E$. Il convient donc de reprendre la construction de \ref{réalisation du tore T_0}. Rappelons que l'on a posé $K_E= K_{\ES{E}}$. On pose aussi $\ES{K}_E= \ES{K}_{\ES{E}}$; on a donc $K_E = \ES{K}_E(\mathfrak{o})$. Alors il existe un $k\in \ES{K}_E(\mathfrak{o}^{\rm nr})$, invariant par $\theta$, de sorte que le tore $T_0= k^{-1} T k$ soit défini sur $F$ (et déployé sur $F^{\rm nr}$) et le morphisme composé
$$
\xi_0: T_0 \xrightarrow{{\rm Int}_k} T \xrightarrow{\xi} T'
$$
soit équivariant pour les actions galoisiennes. Cela entraîne que pour $t_0\in T_0(F)$ régulier (dans $G)$, les éléments $\gamma_0=t_0\theta\in \wt{G}(F)$ et $\delta_0\xi_0(t_0)\theta'\in \wt{T}'(F)$ se correspondent. De plus on a l'égalité
$$
\xi_0(T_0(\mathfrak{o}))=T'(\mathfrak{o}).
$$
D'autre part, on a l'inclusion $T_0(\mathfrak{o})\subset K_E$. Enfin, pour $t_0\in T_0(F)$ régulier, d'après le lemme de \ref{calcul d'un facteur de transfert}, on a l'égalité
$$
\Delta(\xi_0(t_0)\theta', t_0\theta)= \Delta_{\rm II}(\xi_0(t_0)\theta', t_0\theta).
$$

On peut faire la même construction pour $H=GL(n)$. On pose $\ES{K}= \ES{K}_{\ES{E}_H}$. Alors il existe $k_H\in \ES{K}_H(\mathfrak{o}^{\rm nr})$ tel que le tore $T_{0,H}= k_H^{-1} T_H k_H$ et le morphisme $\xi_{0,H}: T_{0,H}\rightarrow T'_H=T'$ soient définis sur $F$. Ce tore $T_{0,H}$ n'est pas mystérieux. On a $T'_H(F)= F'^\times$ où $F'/F$ est la sous--extension de $F^{\rm nr}/F$ de degré $n$. On fixe une base de $\mathfrak{o}'$ (l'anneau des entiers de $F'$) sur $\mathfrak{o}$. \`A l'aide cette base, on identifie $H(F)=GL(n,F)$ à ${\rm Aut}_F(F')$ et $K_H$ au sous--groupe de ${\rm Aut}_F(F')$ qui conserve le réseau $\mathfrak{o}'$. Le groupe $F'^\times$ agit par multiplication sur $F'$, donc s'identifie à un sous--groupe de ${\rm Aut}_F(F')$, et l'on peut prendre pour $T_{0,H}(F)$ ce sous--groupe. Puisque $\ES{E}=\ES{E}_H\times \cdots \times \ES{E}_H$, on a $\ES{K}_E= \ES{K}\times \cdots \times \ES{K}$ et l'on peut prendre $k =(k_H,\cdots ,k_H)$. Alors on a $T_0= T_{0,H}^{\times m}$. Ces choix étant faits, on veut prouver l'égalité
$$
\bs{1}_{\wt{K}'}(\delta)= d(\theta^*)^{1/2}\Delta(\delta,\gamma)I^{\wt{G}}(\gamma,\omega_E, \bs{1}_{\wt{K}_E})\leqno{(2)}
$$
pour tout couple $(\delta,\gamma)\in \ES{D}(\bs{T}')$. D'après le lemme de \ref{réalisation du tore T_0}, si $\delta\notin \wt{K}'$, alors l'égalité (2) est trivialement vraie. Il nous suffit donc de considérer le cas où $\delta = t'\theta'$ avec $t'\in T'(\mathfrak{o})$. On choisit un élément $t_0\in T_0(\mathfrak{o})$ tel que $\xi_0(t_0)=t'$. Puisque, dans notre espace $\wt{G}(F)$, les classes de conjugaison stable co\"{\i}ncident avec les classes de conjugaison ordinaire, on peut supposer dans (2) que $\gamma = t_0\theta$. Alors d'après le lemme de \ref{calcul d'un facteur de transfert}, le facteur de transfert $\Delta$ dans (2) se réduit à $\Delta_{\rm II}$. On applique la construction de Kottwitz rappelée en \ref{une variante de la méthode de Kottwitz}. Le point est que le groupe $T_0(\mathfrak{o}_L)$ vérifie les mêmes conditions (a), (b), (c) que le groupe $K_L$ (ces conditions résultent du théorème de Lang, qui s'applique à tout groupe connexe). L'élément $c$ que l'on introduit peut être choisi dans $T_0(\mathfrak{o}_L)$. Il commute donc à $t_0$ et à $N(t_0)$ et obtient simplement
$$h= N(t_0)\leftrightarrow t_0\theta.
$$ Cet élément $h$ appartient à $T_{0,H}(F)$ et relève notre élément de départ $t'$, \cad que l'on a $\xi_{0,H}(h)=t'$. Puisque 
$(t',h)\in \ES{D}(\bs{T}'_{\!H})$, d'après \cite{H}, on a
$$
1=\bs{1}_{K'}(t')= \Delta_H(t',h)I^H(h,\omega, \bs{1}_K).\leqno{(3)}
$$
Parce que $h$ appartient à $T_{0,H}(\mathfrak{o})$, d'après le lemme de \ref{calcul d'un facteur de transfert}, le facteur de transfert $\Delta_H$ dans (3) se réduit à $\Delta_{{\rm II},H}$. Pour prouver (2), il suffit donc de prouver l'égalité
$$
d(\theta^*)^{1/2}\Delta_{\rm II}(t'\theta',t_0\theta)I^{\wt{G}}(\gamma,\omega_E, \bs{1}_{\wt{K}_E})=
\Delta_{{\rm II},H}(t',h)I^H(h,\omega, \bs{1}_K).
$$
D'après l'égalité (6) de \ref{une variante de la méthode de Kottwitz}, il reste seulement à prouver l'égalité
$$
\Delta_{\rm II}(\delta,\gamma)= \Delta_{{\rm II},H}(t',h).
$$
Celle--ci est encore une fois presque tautologique. 

Cela achève la preuve du théorème de \ref{l'énoncé}.

\begin{marema3}
{\rm On indique brièvement une autre méthode, qui permet de faire l'écono\-mie du numéro \ref{une variante de la méthode de Kottwitz}.
Notons $F'/F$ la sous--extension de degré $n$ de $F^{\rm nr}/F$. On a donc $T'_H(F)\simeq F'^\times$. On introduit le groupe 
algébrique $H_0$ défini sur $F$ tel que le groupe $H_0(F)$ de ses points $F$--rationnels est égal au sous--groupe de $F'^\times \times GL(n,F)$ formé des $(x,h)$ tels que $N_{F'/F}(x)= \det(h)$. On considère le couple $(G_0,\wt{G}_0)$ obtenu à partir de $H_0\times_FE$ 
par restriction des scalaires de $E$ à $F$. En particulier, on a $G_{0}(F)=H_0(E)$. On peut montrer que le lemme fondamental --- toujours pour les unités --- pour $(\wt{G},\omega)$ et la donnée endoscopique $\bs{T}'$ est équivalent au lemme fondamental pour $\wt{G}_0$ (sans caractère) et une donnée endoscopique $\bs{T}'_{\!0}$ construite à partir de $\bs{T}'$. Il suffirait alors d'appliquer la proposition 1 de \cite[p.~245]{K}, plus un calcul de facteurs de transfert. Notons que puisque le centre $Z(G_0)$ est connexe, on peut aussi, grâce à la proposition de \ref{le résultat}, déduire directement le lemme fondamental pour $(\wt{G}_{\rm AD},\omega)$ qui nous intéresse du lemme fondamental pour $\wt{G}_0$ (sans caractère).\hfill$\blacksquare$
}
\end{marema3}


\end{document}